\documentclass[11pt,reqno]{amsart}

\usepackage{color}
\usepackage[colorlinks=true, allcolors=blue,hypertexnames=false]{hyperref}
\usepackage{amsmath, amssymb, amsthm}
\usepackage{mathrsfs}
\usepackage{mathtools}
\usepackage{aliascnt}
\usepackage[noabbrev,capitalize,nameinlink]{cleveref}
\crefname{equation}{}{}
\usepackage{fullpage}
\usepackage[noadjust]{cite}
\usepackage{graphicx}
\usepackage{tikz}
\usetikzlibrary{arrows.meta,positioning,calc}
\usepackage{array}
\usepackage{float}
\usepackage{bbm}
\usepackage[T1]{fontenc}
\usepackage{url}
\usepackage{todonotes}
\allowdisplaybreaks[1]

\AtBeginDocument{\crefname{equation}{}{}}

\numberwithin{equation}{section}
\newtheorem{theorem}{Theorem}[section]
\crefname{theorem}{Theorem}{Theorems}

\newaliascnt{proposition}{theorem}
\newtheorem{proposition}[proposition]{Proposition}
\aliascntresetthe{proposition}
\crefname{proposition}{Proposition}{Propositions}

\newaliascnt{lemma}{theorem}
\newtheorem{lemma}[lemma]{Lemma}
\aliascntresetthe{lemma}
\crefname{lemma}{Lemma}{Lemmas}

\newaliascnt{claim}{theorem}

\aliascntresetthe{claim}
\crefname{claim}{Claim}{Claims}

\newaliascnt{corollary}{theorem}

\aliascntresetthe{corollary}
\crefname{corollary}{Corollary}{Corollaries}

\newaliascnt{conjecture}{theorem}

\aliascntresetthe{conjecture}
\crefname{conjecture}{Conjecture}{Conjectures}

\newtheorem*{question*}{Question}

\newaliascnt{fact}{theorem}

\aliascntresetthe{fact}
\crefname{fact}{Fact}{Facts}

\theoremstyle{definition}
\newaliascnt{definition}{theorem}

\aliascntresetthe{definition}
\crefname{definition}{Definition}{Definitions}

\newaliascnt{problem}{theorem}

\aliascntresetthe{problem}
\crefname{problem}{Problem}{Problems}

\newaliascnt{question}{theorem}

\aliascntresetthe{question}
\crefname{question}{Question}{Questions}

\newtheorem*{definition*}{Definition}

\newaliascnt{example}{theorem}

\aliascntresetthe{example}
\crefname{example}{Example}{Examples}

\newaliascnt{setup}{theorem}

\aliascntresetthe{setup}
\crefname{setup}{Setup}{Setups}

\theoremstyle{remark}
\newtheorem*{remark}{Remark}

\newcommand{\abs}[1]{\lvert#1\rvert}
\newcommand{\norm}[1]{\lVert#1\rVert}

\newcommand{\floor}[1]{\lfloor #1 \rfloor}
\newcommand{\ceil}[1]{\lceil #1 \rceil}
\newcommand{\paren}[1]{\bigg( #1 \bigg)}

\newcommand{\one}{\mathbbm{1}}

\newcommand{\mb}{\mathbb}

\newcommand{\mc}{\mathcal}

\newcommand{\mr}{\mathrm}

\newcommand{\on}{\operatorname}

\newcommand{\ip}[2]{\left\langle #1,#2\right\rangle}

\allowdisplaybreaks
\title{Total variation cutoff for Kac's walk on the sphere}
\author{Vishesh Jain}
\address{Department of Mathematics, Statistics, and Computer Science, University of Illinois Chicago, Chicago, IL 60607, USA}
\email{visheshj@uic.edu}

\author{Clayton Mizgerd}
\address{Department of Mathematics, Statistics, and Computer Science, University of Illinois Chicago, Chicago, IL 60607, USA}
\email{cmizge2@uic.edu}
\date{}

\begin{document}

\begin{abstract}
We prove cutoff in total variation distance for the discrete-time Kac walk on
\(S^{n-1}\) started from a coordinate vector.  The cutoff occurs at
\(
        C_{\mr{BRW}}n\log n,
\)
where \(C_{\mr{BRW}} \approx 3.8916\) is an explicit constant determined by the speed of the leftmost particle in a branching random walk.  In particular, the cutoff location is not at the conjectured time \( 2n\log n\).
\end{abstract}

\maketitle

\tableofcontents

\section{Introduction}\label{sec:introduction}
Let
\[
        S^{n-1}=\{x\in\mb R^n:\norm{x}_2=1\}
\]
denote the Euclidean unit sphere in $\mb{R}^n$ and let \(\sigma_{n-1}\) be the Haar probability measure on \(S^{n-1}\).
The discrete-time Kac walk \((X_t)_{t\geq 0}\) has transition kernel \(P_{n-1}\)
defined as follows.  From \(x\in S^{n-1}\), choose an unordered pair
\(\{I_t,J_t\}\subset[n]\) uniformly from the \(\binom n2\) coordinate
pairs, choose \(\Theta_t\sim\on{Unif}[0,2\pi)\) independently, and rotate
\(x\) by angle \(\Theta_t\) in the \((I_t,J_t)\)-coordinate plane.  Recall that the total variation distance between two measures $\mu$ and $\nu$ on the same space $(\Omega, \mc F)$ is defined by
\[
        \norm{\mu-\nu}_{\on{TV}}
        :=
        \sup_{A \in \mc F}\abs{\mu(A)-\nu(A)}.
\]
Let
\[
        d_n^{(1)}(t)
        :=
        \norm{P_{n-1}^t(e_1,\cdot)-\sigma_{n-1}}_{\on{TV}}
\]
denote the total variation distance between the law of the $t$-step Kac walk started from $e_1 = (1,0,\dots,0) \in S^{n-1}$ and $\sigma_{n-1}$ and, for \(\varepsilon\in(0,1)\), let
\[
        t_{n,1}(\varepsilon)
        :=
        \min\{t\ge0:d_n^{(1)}(t)\le\varepsilon\}
\]
denote the corresponding $\varepsilon$-mixing time from a coordinate start. 

\medskip

Our main result shows that the Kac walk started from a coordinate vector exhibits cutoff in total variation distance. The constant appearing in the statement of the theorem is discussed in \cref{sub:cutoff-constant}.

\begin{theorem}\label{thm:main-cutoff}
There is an explicit constant $C_{\mr{BRW}} \approx 3.8916$ 
such that, for every fixed \(c>0\),
\begin{align}
        &d_n^{(1)}(\ceil{c n\log n})\longrightarrow1,
        &&0<c<C_{\mr{BRW}},\label{eq:main-lower}\\
        &d_n^{(1)}(\ceil{c n\log n})\longrightarrow0,
        &&c>C_{\mr{BRW}}.\label{eq:main-upper}
\end{align}
\end{theorem}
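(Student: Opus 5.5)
The plan is to reduce both directions of \cref{thm:main-cutoff} to the behaviour of the \emph{smallest} squared coordinate $\min_i (X_t)_i^2$, and to analyse that quantity through a branching random walk on log-masses. Start from $e_1$ and track the vector of squared coordinates $w_i(t)=(X_t)_i^2$. When the chosen pair $\{I_t,J_t\}$ consists of an occupied coordinate $i$ and a coordinate $j$ with $w_j=0$, the mass splits as $w_i\cos^2\Theta_t$ and $w_i\sin^2\Theta_t$. In logarithmic scale $-\log w$, this is a binary branching step with i.i.d. displacement law $-\log\cos^2\Theta$. When both coordinates are already occupied, the update mixes the two masses, and the signs and phases of the coordinates matter.

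Over the time scale $c n\log n$, each coordinate is touched at rate $\approx 2/n$ per step. I would therefore rescale time by $n/2$ and compare the genealogy of masses to a continuous-time BRW with binary branching and displacement $-\log\cos^2\Theta$. The minimal position of this BRW grows linearly with speed $v$, given by the usual Biggins/Hammersley--Kingman--Derrida formula $v=\inf_{\lambda>0}\lambda^{-1}\log \mb E[\cdot]$. Together with the requirement that the minimum reach depth $2\log n$, this determines $C_{\mr{BRW}}$.

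For the lower bound \eqref{eq:main-lower}, the key steps are as follows.
\begin{itemize}
\item Under $\sigma_{n-1}$, $\min_i x_i^2\ge n^{-2-\delta}$ with high probability, since $x_i^2\approx \chi^2_1/n$ and the minimum of $n$ such variables is of order $n^{-2}$.
\item For $c<C_{\mr{BRW}}$, with high probability some coordinate has $w_i(t)\le n^{-2-\delta}$. To prove this, I exhibit one lineage that follows the leftmost BRW path, using a second-moment/truncated-path argument for the BRW minimum.
\item The lineage must survive interactions with occupied coordinates. Along a fixed lineage, a collision with an occupied coordinate of comparable or larger mass only mixes in mass, and this can destroy smallness. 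I would therefore show that along the near-minimal path the ``branching'' steps dominate. Alternatively, and more robustly, I would prove the bound $\mb P(w_i\le \varepsilon\, u)\gtrsim$ explicit, conditional on the masses, for a rotation mixing $w_i$ and $w_j$, and run the first-moment/second-moment count on this modified process instead. A lineage staying near the BRW minimum sits in a region where almost all other coordinates are much larger, so collisions do not help it.
\end{itemize}
The test set $A=\{\min_i x_i^2\le n^{-2-\delta}\}$ then gives $d_n^{(1)}\to1$.

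For the upper bound \eqref{eq:main-upper}, I would split the time as $t=t_1+t_2$ with $t_1=(C_{\mr{BRW}}+\eta)n\log n$ and $t_2=\eta n\log n$.
\begin{itemize}
\item \textbf{Step 1.} Using the BRW upper tail (a first-moment bound: the expected number of lineages with log-mass below $-(2-\delta')\log n$ tends to zero) together with the fact that mixing collisions only average masses, show that at time $t_1$ the state is ``good'' with high probability: $\min_i w_i\ge n^{-2+\delta'}$ and $\max_i w_i\le n^{-1+\delta'}$.
\item \textbf{Step 2.} From any good state, show that $t_2$ further steps bring the law within $o(1)$ of $\sigma_{n-1}$ in total variation.
\end{itemize}
For Step 2, I would first apply a random sign/Haar-like averaging: conditionally on $(I_s,J_s,\Theta_s)$ the map is linear, so $X_{t}=R\,X_{t_1}$ for a random product of Givens rotations $R$. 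It then suffices that $R$ acting on a delocalized unit vector is close to uniform. I would prove this through an $L^2$ or Hellinger bound of the density of $(X_t)$ against $\sigma_{n-1}$, using the spectral information for $P_{n-1}$ on low-degree polynomials, where eigenvalues are $1-\Theta(1/n)$, and a smoothing lemma showing that a few extra random rotations of a state with no tiny coordinates produce a bounded density. The point of requiring $\min_i w_i\ge n^{-2+\delta'}$ is precisely to make the contribution of high-degree chaos controllable.

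The main obstacle is this last step: proving that $O(\eta n\log n)$ steps suffice from a good state. Standard spectral-gap bounds give mixing only in $O(n^2)$ or $O(n\log n)$ with an unspecified constant, which would spoil the sharp constant. I would first try a coupling of the masses $w$ (à la contraction in Wasserstein distance, which is known to be fast) combined with a one-shot smoothing of angles in the final $o(n\log n)$ steps, converting Wasserstein closeness into total variation closeness. The smoothing estimate needs good states to avoid singular densities, and this is exactly where the lower cutoff $\min_i w_i\ge n^{-2+\delta'}$ enters.
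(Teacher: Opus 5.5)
\textbf{Lower bound.} The smallest coordinate is the wrong statistic, and this breaks the lower bound. Under $\sigma_{n-1}$, $x_i^2\sim\on{Beta}(1/2,(n-1)/2)$ satisfies $\mb P(x_i^2\le\epsilon)\asymp\sqrt{n\epsilon}$ for $\epsilon\le 1/n$. So $\min_i x_i^2$ is of order $n^{-3}$, not $n^{-2}$, and your test set $\{\min_i x_i^2\le n^{-2-\delta}\}$ is typical under Haar for $\delta<1$.

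Moving the threshold below $n^{-3}$ does not rescue the argument. Small energies are regenerated at every update through $U$ near $0$. Mixing with an occupied coordinate of larger energy typically destroys a small mass. Hence there is no monotone comparison between the walk and a small-mass lineage of the BRW once $\Theta(n)$ coordinates are occupied; your third bullet names this problem but does not solve it, and nothing ties the minimum to $C_{\mr{BRW}}$.

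The observable that works is the \emph{largest} coordinate. By \cref{lem:stationary-max}, $\pi_n(\max_i y_i\ge n^{-\beta})\to0$ for every $\beta<1$. Meanwhile the largest surviving piece of the initial unit of energy is $n^{-2c\gamma_{\mr{BRW}}+o(1)}$, governed by the \emph{leftmost} particle of the BRW in $-\log$ scale. For this quantity the invariant \eqref{eq:mass-domination-invariant} holds: a tracked mass is split in the same proportions as the actual energy of its pair, so it remains a lower bound, and collisions can only add energy. The relevant threshold is depth $\log n$ (mass $\approx 1/n$), i.e.\ $2c\gamma_{\mr{BRW}}=1$. Your requirement that the minimum reach depth $2\log n$ would double the constant.

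\textbf{Upper bound.} Step 1 asks for $\min_i w_i\ge n^{-2+\delta'}$, which fails under $\pi_n$ itself, hence at any time when the walk is close to mixed.

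More fundamentally, Step 2 is false as stated. From any deterministic $x$, a coordinate not selected during $t_2$ steps keeps its exact value $x_k$. For $t_2=\eta n\log n$ with $\eta<1/2$, some coordinate is unselected with probability $1-o(1)$. The law then puts mass $1-o(1)$ on the Haar-null set $\bigcup_k\{y:y_k=x_k\}$, so the distance stays near $1$. Any scheme of the form ``reach a good deterministic state, then smooth'' therefore costs at least $\frac12 n\log n$ extra steps. No spectral, Wasserstein or smoothing lemma can avoid this.

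The missing idea is to use the randomness of the state at time $t_n$ itself, with no extra time. The paper does this as follows:
\begin{itemize}
\item By \cref{prop:hidden-rep}, $Q_n^{t_n}(e^{(1)},\cdot)=\mb E[F_{\one+u_{t_n}}]$.
\item The $\Phi_p$ drift with rate $2(1-m(p))/n$, $p\in(2,4)$, uses the same $m(p)$ as the BRW. It gives $\norm{u_{t_n}}_\infty\to0$ on $\{R_{t_n}\ge r\}$ exactly when $c>\inf_p p/(2(1-m(p)))=C_{\mr{BRW}}$.
\item \cref{lem:product-gamma-full} bounds $1+\chi^2$ by $\mb E\prod_i(1-u_iv_i)^{-1/2}$ for independent copies $u,v$.
\item Exchangeability of coordinates $2,\ldots,n$, together with the Carlen--Lieb--Loss permanent inequality (\cref{lem:permutation-full}), makes the cross-overlap $\sum_i u_iv_i$ negligible. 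This works even though $\norm{u_{t_n}}_2^2$ is a positive power of $n$ for $c$ near $C_{\mr{BRW}}$.
\end{itemize}
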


\begin{remark}
Our proof shows that $d_{n}^{(1)}(\lceil C_{\mr{BRW}}n\log n + s n\rceil) \to 0$ as $s \to \infty$. We believe that, with additional effort, the lower bound in \cref{eq:main-lower} can be refined to show that $d_{n}^{(1)}(\lceil C_{\mr{BRW}}n\log n - s n \log \log n \rceil) \to 1$ as $s \to \infty$. For simplicity of presentation, we do not pursue this refinement here. We also conjecture that the upper bound in \cref{eq:main-upper} holds from an arbitrary deterministic start.    
\end{remark}

The Kac walk has a long history, which we briefly summarize in \cref{sub:related-work}. Most relevant to this work, Pillai and Smith showed that
the total variation mixing time from a worst-case start lies between
\((1/2)n\log n\) and \(200n\log n\) \cite{PillaiSmith2017}.  They also
conjectured cutoff at time \(2n\log n\).  Our main result \cref{thm:main-cutoff} establishes cutoff from a coordinate start, refutes the conjectured constant, and gives
the correct constant.

\subsection{The cutoff constant}\label{sub:cutoff-constant}

The constant is determined
by the speed of the minimum position in a binary branching random walk. The reason that this same constant appears in both the upper and lower bounds is explained in \cref{sub:overview}.

Let \(U\sim\on{Beta}(1/2,1/2)\). Consider the continuous-time
branching random walk started from one particle at \(0\), in which a
particle at \(x\) branches at exponential rate \(1\) into two children at
\[
        x-\log U,
        \qquad
        x-\log(1-U).
\]
Let \(B_T\) denote the minimum position among particles alive at time
\(T\). We compute the almost sure speed of $B_T$. 

We use Biggins's theorem for general branching random walks
\cite[Corollary~2]{Biggins1995}. We very briefly describe Biggins's setup, referring the reader to \cite{Biggins1995} for more details. In Biggins's notation, the reproduction
law is described by a point process on space-time with mean measure
\(\mu\), and one defines
\[
        M(\theta,\phi)
        :=
        \int e^{-\theta x-\phi t}\,\mu(dx,dt),
        \qquad
        \alpha(\theta)
        :=
        \inf\{\phi:M(\theta,\phi)\le1\}.
\]
Biggins shows that the almost sure speed of the rightmost
particle is
\[
        \inf\{a:\alpha^*(a)<0\},
\]
where
\[
        \alpha^*(a)
        :=
        \inf_{\theta<0}\{a\theta+\alpha(\theta)\}.
\]

We apply this theorem to the reflected process \(X=-V\), where \(V\) is
the logarithmic-loss position above.  In the reflected process, a particle at
\(x\) gives birth, after an independent exponential time \(E\) of rate
\(1\), to two children at
\[
        x+\log U,
        \qquad
        x+\log(1-U).
\]
Thus the reproduction point process is
\[
        \delta_{(\log U,E)}
        +
        \delta_{(\log(1-U),E)}.
\]
For \(\lambda>0\), direct computation gives
\[
\begin{aligned}
        M(-\lambda,\phi)
        &=
        \mb E\left[
        e^{\lambda\log U-\phi E}
        +
        e^{\lambda\log(1-U)-\phi E}
        \right]  \\
        &=
        \frac{m(\lambda)}{1+\phi},
\end{aligned}
\]
where
\begin{equation}\label{eq:m-lambda-main}
        m(\lambda)
        :=
        \mb E[U^\lambda+(1-U)^\lambda]
        =
        \frac{2\Gamma(\lambda+1/2)}
             {\sqrt\pi\,\Gamma(\lambda+1)}.
\end{equation}
Hence \(\alpha(-\lambda)=m(\lambda)-1\), and
\[
        \alpha^*(a)
        =
        \inf_{\lambda>0}\{-a\lambda+m(\lambda)-1\}.
\]
It follows that
\[
        \inf\{a:\alpha^*(a)<0\}
        =
        -\sup_{\lambda>0}\frac{1-m(\lambda)}{\lambda}.
\]
Since the rightmost particle in the reflected process is \(-B_T\),
Biggins's theorem gives
\[
        \frac{B_T}{T}
        \longrightarrow
        \gamma_{\mr{BRW}}
        :=
        \sup_{\lambda>0}\frac{1-m(\lambda)}{\lambda}
        =
        \sup_{\lambda>1}\frac{1-m(\lambda)}{\lambda}
\]
almost surely.  The last equality uses \(m(1)=1\), \(m(\lambda)\ge1\) on
\((0,1]\), and \(m(\lambda)<1\) on \((1,\infty)\).

In our case, the convergence also holds in \(L^1\). Since \(B_T/T\to\gamma_{\mr{BRW}}\) almost surely, it remains only to
check uniform integrability of \(\{B_T/T:T\ge1\}\). To see this, at each
branch keep the child with larger mass.  Along this selected path, each
branch increases the logarithmic loss by at most \(\log2\), and the
number of branches by time \(T\) is a rate-one Poisson random variable
\(N_T\).  Therefore
\[
        0\le B_T\le(\log2)N_T,
\]
and
\[
        \sup_{T\ge1}\mb E[(B_T/T)^2]
        \le
        (\log2)^2\sup_{T\ge1}\mb E[(N_T/T)^2]
        \le
        2(\log2)^2.
\]
This proves uniform integrability.

\medskip

The cutoff constant is
\[
        C_{\mr{BRW}}
        :=
        \frac1{2\gamma_{\mr{BRW}}} =     \inf_{\lambda>1}
        \frac{\lambda}{2(1-m(\lambda))}.
\]
Numerically (see \cref{app:numerical}),
\[
        \gamma_{\mr{BRW}}\approx0.12848,
        \qquad
        C_{\mr{BRW}}\approx3.8916.
\]
The factor \(2\) comes from the fact that each update of the Kac walk
selects two coordinates, so that  \(c n\log n\) discrete
updates correspond to \((2+o(1))c\log n\) units of branching time.

\subsection{Background and related work}\label{sub:related-work}

Kac introduced his walk as a finite-particle model for the approach to
equilibrium in kinetic theory \cite{Kac1956}.  In this model, the
coordinates represent one-dimensional particle velocities and the
constraint \(X\in S^{n-1}\) expresses conservation of kinetic energy.
Understanding the rate of convergence to equilibrium for this walk is
therefore a quantitative form of Kac's program.

There is a large literature on Kac-type collision processes.
Much of the early progress concerned spectral gaps, entropy production,
and propagation of chaos.  For instance, the spectral gap for Kac's
model was studied by Janvresse, Carlen--Carvalho--Loss, Maslen, and
Caputo, among others
\cite{Janvresse2001,CarlenCarvalhoLoss2003,Maslen2003,Caputo2008}.
Entropy and propagation-of-chaos estimates were developed further in
important works of Carlen--Carvalho--Le Roux--Loss--Villani, and of
Mischler--Mouhot
\cite{CarlenCarvalhoLeRouxLossVillatoro2010,MischlerMouhot2013}.  These
results are central to the kinetic-theory side of the subject, but they
do not by themselves say anything non-trivial about total variation mixing from a point mass, since the initial law is singular with respect to the stationary distribution, and its
\(L^2\)-distance from stationarity is infinite.

Diaconis and Saloff-Coste were the first to obtain a quantitative bound on the total variation mixing time of the Kac walk \cite{DiaconisSaloffCoste2000}.  Jiang gave
the first polynomial upper bound on the total variation mixing time from a coordinate
start, proving mixing in \(O(n^5(\log n)^3)\) steps
\cite{Jiang2012}.  Pillai and Smith determined the asymptotic order of the total variation mixing time from a worst-case start, and in fact, showed pre-cutoff with window $[(1/2) n \log n, 200 n \log n]$. They also conjectured cutoff at time
\(2n\log n\).  Our main result \cref{thm:main-cutoff} proves cutoff 
from a coordinate start and, since \(C_{\mr{BRW}} > 2\), disproves the
conjectured cutoff location.

The lower bound \cref{eq:main-lower} is related to, but distinct from, the obstruction in the
(complete-graph) repeated-averages process.  In that model, a selected pair
of coordinates is replaced by the arithmetic average of the pair.  Chatterjee,
Diaconis, Sly and Zhang proved cutoff at
\(
        \frac{1}{2\log 2}\,n\log n
\)
starting from $e_1$ \cite{ChatterjeeDiaconisSlyZhang2022}; see also the work of
Movassagh, Szegedy and Wang on repeated averaging over general graphs
\cite{MovassaghSzegedyWang2024}. Caputo, Quattropani and Sau recently developed a unified Wasserstein
cutoff theory for mean-field exchange models, including stochastic
redistribution models \cite{CaputoQuattropaniSau2025}.  The
squared-coordinate (or energy) Kac chain studied here (see \cref{sub:squared-chain}) is also a stochastic
redistribution model, with redistribution variable
\(U\sim\on{Beta}(1/2,1/2)\). The Wasserstein theory in
\cite{CaputoQuattropaniSau2025} is governed by the evolution of a
size-biased coordinate.  In contrast, total variation from a coordinate
start is governed by whether any descendant of the initially nonzero
coordinate remains atypically large.  This extremal obstruction is what
leads here to the branching-random-walk constant \(C_{\mr{BRW}}\).

\subsection{Proof overview}\label{sub:overview}

We give an overview of the proof, with particular
attention to the appearance of \(C_{\mr{BRW}}\) in the two bounds.

The first step is an exact reduction to squared coordinates.  If a Kac
update acts in the \((i,j)\)-plane, then the energy in that plane is
split in \(\on{Beta}(1/2,1/2)\) proportions.  Consequently,
\[
        Y_t=(X_{t,1}^2,\ldots,X_{t,n}^2)
\]
is a Markov chain \(Q_n\) on the simplex
\[
        \Delta_{n-1}
        :=
        \left\{y\in[0,1]^n:\sum_{i=1}^n y_i=1\right\}
\]
with stationary law
\[
        \pi_n=\on{Dir}_n(1/2,\ldots,1/2).
\]
For the coordinate start, passing to squared coordinates preserves total
variation distance:
\[
        \norm{P_{n-1}^t(e_1,\cdot)-\sigma_{n-1}}_{\on{TV}}
        =
        \norm{Q_n^t(e^{(1)},\cdot)-\pi_n}_{\on{TV}}.
\]
This identity is proved in \cref{prop:sphere-energy}.  It therefore
suffices to study \(Q_n\) started from
\(e^{(1)}=(1,0,\ldots,0)\).

We first discuss the lower bound.  Let
\[
        L_t:=\max_{1\le i\le n}Y_{t,i}.
\]
For every fixed \(0<\beta<1\),
\[
        \pi_n\left(\max_i y_i\ge n^{-\beta}\right)
        \longrightarrow0;
\]
see \cref{lem:stationary-max}.  Thus it is enough to show that, before
the claimed cutoff time, a coordinate of \(Y_t\) is still larger than
\(n^{-\beta}\) with high probability.

Starting from \(e^{(1)}\), the initial unit of energy is repeatedly split
when a coordinate carrying part of that energy is selected.  A given
coordinate is selected with probability \(2/n\) at each update, so \(t\)
updates correspond to approximately \(2t/n\) units of time in the
branching random walk from \cref{sub:cutoff-constant}.  At
\(t=c n\log n\), the minimum logarithmic loss in that branching random
walk is therefore
\[
        \bigl(2c\gamma_{\mr{BRW}}+o(1)\bigr)\log n.
\]
Equivalently, the largest part produced by the repeated splittings has
the scale
\[
        \exp\left\{
        -\bigl(2c\gamma_{\mr{BRW}}+o(1)\bigr)\log n
        \right\}
        =
        n^{-2c\gamma_{\mr{BRW}}+o(1)}.
\]
This is larger than \(n^{-\beta}\) for some \(\beta<1\) precisely when
\[
        2c\gamma_{\mr{BRW}}<1,
        \qquad\text{or equivalently}\qquad
        c<C_{\mr{BRW}}.
\]

\medskip

For the upper bound, we use a different representation of the law of the energy chain (see
\cref{sub:scale-vector-representation}). For \(a=(a_1,\ldots,a_n)\in[0,\infty)^n\setminus\{0\}\), let \(F_{a}\) be
the law on \(\Delta_{n-1}\) of
\begin{equation*}
        \left(
        \frac{a_1G_1}{\sum_k a_kG_k},\ldots,
        \frac{a_nG_n}{\sum_k a_kG_k}
        \right),
        \qquad
        G_i\stackrel{\mr{iid}}{\sim}\on{Gamma}(1/2,1).
\end{equation*}
Then \(F_{\one}=\pi_n\), \(F_{e^{(1)}}=\delta_{e^{(1)}}\), and
\cref{prop:hidden-rep} gives
\[
        Q_n^t(e^{(1)},\cdot)=\mb E[F_{A_t}],
\]
where \(A_0=e^{(1)}\) and one step of \(A_t\) replaces a uniformly
selected pair by
\[
        A_i'=A_j'=BA_i+(1-B)A_j,
        \qquad
        B\sim\on{Beta}(1/2,1/2),
\]
leaving the other coordinates unchanged.

The identity \(F_{\lambda a}=F_a\) shows that only the projective class of \(A_t\) is relevant.  We therefore write
\[
        R_t:=\sum_iA_{t,i},
        \qquad
        \bar A_t:=\frac{R_t}{n},
        \qquad
        \xi_t:=A_t-\bar A_t\one,
        \qquad
        u_t:=\frac{A_t}{\bar A_t}-\one
             =\frac{n\xi_t}{R_t}.
\]
Thus \(F_{A_t}=F_{\one+u_t}\), and \(u_{t,i}\) is the relative
deviation of \(A_{t,i}\) from the average coordinate \(R_t/n\).
To compare \(F_{A_t}\) with \(F_{\one}=\pi_n\), we first show that
these relative deviations are uniformly small whenever \(R_t\) is
bounded away from zero.

More precisely, if
\(t_n=\ceil{c n\log n}\) with \(c>C_{\mr{BRW}}\), then
\cref{prop:Linfty-full} gives, for every fixed \(r,\varepsilon>0\),
\[
        \mb P\left(
        R_{t_n}\ge r,\ 
        \norm{u_{t_n}}_\infty>\varepsilon
        \right)
        \longrightarrow0.
\]
The restriction to \(R_{t_n}\ge r\) is needed because
\(u_t=n\xi_t/R_t\).  It can be removed at the end using
\cref{lem:R-tail-full}, which shows that
\[
        \limsup_{n\to\infty}\mb P(R_{t_n}\le r)
        \le C\sqrt r,
        \qquad 0<r\le1,
\]
and we eventually let \(r\downarrow0\).

We now explain why the threshold in this estimate is
\(C_{\mr{BRW}}\). For \(2<p<4\), set
\[
        \Phi_p(x):=\sum_i|x_i|^p,
        \qquad
        W_t:=\sum_i\xi_{t,i}^2.
\]
In \cref{prop:fractional-drift-full}, we prove the one-step drift estimate
\[
\begin{aligned}
        \mb E[\Phi_p(\xi_{t+1})\mid A_t]
        &\le
        \left(
        1-\frac{2(1-m(p))}{n}
        \right)\Phi_p(\xi_t)
        +
        C_p n^{-p/2}W_t^{p/2}.
\end{aligned}
\]
The coefficient \(2(1-m(p))/n\) can be read directly from the update rule.
The quantity
\[
        m(p)=\mb E[B^p+(1-B)^p]
\]
is the fraction of \(p\)-th moment retained by the beta weights.  Indeed,
when the two selected coordinates are treated separately, their
contribution to \(\Phi_p\) is multiplied, in expectation, by \(m(p)\).
Since each coordinate is selected with probability \(2/n\), the corresponding contribution to the one-step drift is
\[
        \left(1-\frac2n\right)\Phi_p(\xi_t)
        +
        \frac{2m(p)}n\Phi_p(\xi_t)
        =
        \left(1-\frac{2(1-m(p))}{n}\right)\Phi_p(\xi_t).
\]
The interaction between the two selected coordinates, together with the
change in their common average, accounts for the error term
\(
        C_p n^{-p/2}W_t^{p/2}.
\)
The same function \(m(p)\)
appears in the branching-random-walk calculation because a beta split
replaces a mass \(x\) by \(Bx\) and \((1-B)x\), whose total \(p\)-th
power has expectation
\[
        \mb E\left[(Bx)^p+((1-B)x)^p\right]
        =
        m(p)x^p.
\]
Ignoring for the moment the error term in the drift estimate, after
\(t=c n\log n\) steps we have
\[
        \mb E\Phi_p(\xi_t)
        \approx
        \exp\left\{-2(1-m(p))\frac{t}{n}\right\}
        =
        n^{-2c(1-m(p))}.
\]
On the event \(\{R_t\ge r\}\), the identity
\(u_t=n\xi_t/R_t\) and Markov's inequality give
\[
        \mb P\left(
        R_t\ge r,\ 
        \norm{u_t}_\infty>\varepsilon
        \right)
        \le
        \left(\frac{n}{r\varepsilon}\right)^p
        \mb E\Phi_p(\xi_t).
\]
Thus the contribution of the initial condition tends to zero provided
\[
        p-2c(1-m(p))<0,
        \qquad\text{or equivalently}\qquad
        c>\frac{p}{2(1-m(p))}.
\]
Optimizing this
over \(p\) leads to the appearance of
\[
        \inf_{p>1}\frac{p}{2(1-m(p))}
        =
        C_{\mr{BRW}}.
\]

\medskip

It remains to upgrade to total-variation convergence. This does not require running the chain for any additional time. We work at the same time \(t_n=\ceil{c n\log n}\) and compare \[ Q_n^{t_n}(e^{(1)},\cdot) = \mb E[F_{\one+u_{t_n}}] \] directly with \(\pi_n=F_{\one}\). We bound total variation through \(\chi^2\)-divergence, using the gamma representation of the measures \(F_{\one+u}\). The \(\ell^\infty\)-estimate for \(u_{t_n}\) is needed, but is not by itself sufficient in growing dimension. To see the issue, consider first a fixed vector \(u\). The gamma calculation in \cref{lem:product-gamma-full} gives \[ 1+\chi^2(F_{\one+u}\Vert\pi_n) \le \prod_{i=1}^n(1-u_i^2)^{-1/2}. \] When \(\norm{u}_\infty\) is small, the logarithm of the right-hand side has leading term \[ \frac12\sum_i u_i^2. \] Thus, even if every coordinate of \(u\) is small, their cumulative effect may not be. In particular, it is not enough to compare each individual law \(F_{\one+u}\) with \(\pi_n\).

The averaging over the random scale vector changes this calculation. After restricting to an event of probability tending to one on which \(R_{t_n}\) is bounded below, \(\norm{u_{t_n}}_\infty\le\delta\), and \(\norm{u_{t_n}}_2\) is controlled, let \(\mu_n\) denote the resulting conditioned mixture. If \(u\) and \(v\) are two independent copies of \(u_{t_n}\) under this conditioning, then \cref{lem:product-gamma-full} gives \[ 1+\chi^2(\mu_n\Vert\pi_n) \le \mb E \prod_{i=1}^n(1-u_iv_i)^{-1/2}. \] The self-overlap \(\sum_i u_i^2\) from the fixed-\(u\) calculation is therefore replaced by the cross-overlap \(\sum_i u_iv_i\) of two independent copies. More precisely, the \(\ell^\infty\)-bound gives \[ \log\prod_{i=1}^n(1-u_iv_i)^{-1/2} \le \frac12\sum_i u_iv_i + \frac{1}{2}\sum_i u_i^2v_i^2. \]
Thus, to first order, the product $\prod_i(1-u_i v_i)^{-1/2}$ measures the coordinatewise alignment between the two perturbations through the cross-overlap \(\sum_i u_iv_i\). It remains to show that two independent copies are unlikely to align substantially.

This is where the coordinate start enters the upper bound. Since \(A_0=e^{(1)}\), the law of \(u_{t_n}\) is invariant under permutations of coordinates \(2,\ldots,n\). Conditioned on \(u_1,v_1\) and on the two multisets \[ \{u_2,\ldots,u_n\}, \qquad \{v_2,\ldots,v_n\}, \] the relative matching of the remaining coordinates is therefore a uniform random permutation. If \(\Pi\) denotes this permutation, then \[ \mb E_\Pi\sum_{i=2}^n u_i v_{\Pi(i)} = \frac{u_1v_1}{n-1}, \] since \(\sum_i u_i=\sum_i v_i=0\). The estimate in \cref{lem:permutation-full} gives \[ \begin{aligned} &\mb E_\Pi\exp\left\{ \frac12\sum_{i=2}^n u_i v_{\Pi(i)} + \frac12\sum_{i=2}^n u_i^2v_{\Pi(i)}^2 \right\} \\ &\qquad\le \exp\left\{ \frac{u_1v_1}{2(n-1)} + C\frac{\norm{u}_2^2\norm{v}_2^2}{n} \right\}. \end{aligned} \] Including the first coordinate and using \(\abs{u_1},\abs{v_1}\le\delta\), we obtain \[ \mb E \prod_{i=1}^n(1-u_iv_i)^{-1/2} \le \exp\left\{ C\delta^2 + C\frac{\norm{u}_2^2\norm{v}_2^2}{n} \right\}. \]

Finally, on the event under consideration,
\[
        \norm{u}_2^2,\norm{v}_2^2
        \le
        K\frac{n^2\rho_n^{t_n}}{r^2},
\]
and hence
\[
        \frac{\norm{u}_2^2\norm{v}_2^2}{n}
        \le
        \frac{K^2}{r^4}n^3\rho_n^{2t_n}
        =
        O_{r,K}\bigl(n^{3-c+o(1)}\bigr),
\]
since \(\rho_n=1-\frac1{2n}+O(n^{-2})\).  This tends to zero for
\(c>3\), so
\[
        \limsup_{n\to\infty}\chi^2(\mu_n\Vert\pi_n)
        \le
        e^{C\delta^2}-1.
\]
Letting \(\delta\downarrow0\), \(K\to\infty\), and \(r\downarrow0\)
gives total-variation convergence.

\subsection{Notation}\label{sub:notation}

We write \([n]=\{1,\ldots,n\}\), \(S_m\) for the symmetric group on
\([m]\), and all logarithms are natural.  The vectors
\(e_1,\ldots,e_n\) are the standard basis vectors of
\(\mb R^n\).  When the vector \((1,0,\ldots,0)\) is regarded as a point
of the simplex, we write it as \(e^{(1)}\).  The symbol \(\one\) denotes
the all-ones vector, with dimension determined by context, while
\(\one_E\) denotes the indicator of an event or set \(E\).  We write
\[
        \Delta_{n-1}
        :=
        \left\{y\in[0,1]^n:\sum_{i=1}^n y_i=1\right\}.
\]
For \(x\in\mb R^n\) and \(1\le p<\infty\),
\[
        \norm{x}_p
        :=
        \left(\sum_{i=1}^n|x_i|^p\right)^{1/p},
        \qquad
        \norm{x}_\infty
        :=
        \max_{1\le i\le n}|x_i|,
\]
and \(\ip{x}{y}\) denotes the Euclidean inner product.

For a point \(x\), \(\delta_x\) denotes the Dirac probability measure at
\(x\).  If \(f\) is measurable and \(\mu\) is a measure, then
\(f_\#\mu\) denotes the push-forward of \(\mu\) under \(f\).  If \(M\)
is a random probability measure, then \(\mb E[M]\) denotes its mixture:
\[
        \mb E[M](A):=\mb E[M(A)]
\]
for every measurable set \(A\).  Subscripts on \(\mb P\) and \(\mb E\)
indicate either the initial state or the random variables with respect
to which probability or expectation is taken.  We use
\(\stackrel d=\) for equality in distribution and \(\Rightarrow\) for
convergence in distribution.  The notation
\(\stackrel{\mr{iid}}{\sim}\) and \(\stackrel{\mr{ind}}{\sim}\) indicates
identically distributed independent variables and independent variables,
respectively.

We use \(\on{Gamma}(\alpha,\theta)\) for the gamma distribution with
shape parameter \(\alpha>0\) and scale parameter \(\theta>0\), whose
density is
\[
        x\longmapsto
        \frac{x^{\alpha-1}e^{-x/\theta}}
             {\Gamma(\alpha)\theta^\alpha}\one_{\{x>0\}}.
\]
Here \(\Gamma\) denotes the gamma function.
We use \(\on{Beta}(\alpha,\beta)\), \(\on{Bin}(m,p)\), and
\(\on{Unif}(I)\) for the beta, binomial, and uniform distributions,
respectively.  The notation
\(\on{Dir}_n(\alpha_1,\ldots,\alpha_n)\) denotes the Dirichlet
distribution on \(\Delta_{n-1}\).  Equivalently, if
\(G_1,\ldots,G_n\) are independent with
\(G_i\sim\on{Gamma}(\alpha_i,1)\), then
\[
        \left(
        \frac{G_1}{\sum_kG_k},\ldots,
        \frac{G_n}{\sum_kG_k}
        \right)
        \sim
        \on{Dir}_n(\alpha_1,\ldots,\alpha_n).
\]

Unless another limit is explicitly specified, asymptotic statements are
as \(n\to\infty\).  For real-valued sequences \(f=f(n)\) and \(g=g(n)\),
with \(\abs{g}>0\) eventually, the relation \(f=O(g)\) means that
\(\abs{f}\le C\abs{g}\) for all sufficiently large \(n\), while
\(f=o(g)\) means that \(f/g\to0\).  We write
\(f\asymp g\) when both \(f=O(g)\) and \(g=O(f)\).  A subscript records
permitted dependence of the implicit constant; for example,
\(O_T(g)\) may depend on \(T\) but not on \(n\).  Constants denoted by
\(C\), with or without subscripts, are finite and positive and may
change from line to line.  

\subsection{Acknowledgments} The authors used ChatGPT extensively, at the level of a co-author, for brainstorming, help with proofs, literature review, checking for mistakes, writing code, and for preparing the manuscript. The mathematical content, the final text, and any remaining errors are the responsibility of the authors. 

V.J.~thanks Natesh Pillai and Aaron Smith for introducing him to this problem and for useful discussions, and Mehtaab Sawhney for helpful comments on an early version of the manuscript. V.J.~is partially supported by NSF grant DMS-2237646. C.M.~is partially supported by a Simons Dissertation Fellowship.

\section{Auxiliary processes}\label{sec:auxiliary}

Our proofs use two reductions. The first replaces Kac's walk on the sphere by
the squared-coordinate (or energy) chain on the simplex.  The second writes the law
of this chain as a mixture of normalized gamma laws with random scale
parameters. Throughout the paper, \(\on{Gamma}(\alpha,\theta)\) denotes the gamma
law with shape parameter \(\alpha>0\) and scale parameter \(\theta>0\),~i.e.~with density
\[
        x\mapsto
        \frac{1}{\Gamma(\alpha)\theta^\alpha}
        x^{\alpha-1}e^{-x/\theta}\one_{\{x>0\}} .
\]

\subsection{The energy chain}
\label{sub:squared-chain}

Let \(\Theta\sim\on{Unif}[0,2\pi)\).  Then
\[
        \cos^2\Theta\sim\on{Beta}(1/2,1/2).
\]
Thus, if a Kac update acts on coordinates \(i,j\), and if
\[
        s=x_i^2+x_j^2
\]
is the energy in the selected two-plane, then the squared coordinates
after the update have law
\[
        (X_{t+1,i}^2,X_{t+1,j}^2)
        \stackrel d=
        (sU,s(1-U)),
        \qquad
        U\sim\on{Beta}(1/2,1/2).
\]
The squared-coordinate (or energy) process
\[
        Y_t=(X_{t,1}^2,\ldots,X_{t,n}^2)
\]
is therefore a Markov chain on
\[
        \Delta_{n-1}
        :=
        \left\{y\in[0,1]^n:\sum_{i=1}^n y_i=1\right\}.
\]
We write \(Q_n\) for its transition kernel. Explicitly, from \(y\in\Delta_{n-1}\),
choose an unordered pair \(\{i,j\}\subset[n]\) uniformly and choose
\(U\sim\on{Beta}(1/2,1/2)\) independently.  Then set the next step to $y'$ defined by
\[
        y_i'=U(y_i+y_j),
        \qquad
        y_j'=(1-U)(y_i+y_j),
        \qquad
        y_k'=y_k\quad(k\notin\{i,j\}).
\]

We claim that \(Q_n\) is reversible with respect to
\[
        \pi_n:=\on{Dir}_n(1/2,\ldots,1/2).
\]
It will be convenient to work with the following equivalent representation of the Dirichlet distribution: if \(G_1,\ldots,G_n\) are independent
\(\on{Gamma}(1/2,1)\) variables, then
\[
        \left(\frac{G_1}{\sum_kG_k},\ldots,
              \frac{G_n}{\sum_kG_k}\right)
        \sim \pi_n .
\]
This representation also shows that \(\pi_n\) is the law of the squared
coordinates of a Haar-distributed point on \(S^{n-1}\).

To see reversibility, we recall the standard beta-gamma algebra. If
\(G_i\sim\on{Gamma}(\alpha_i,1)\) and
\(G_j\sim\on{Gamma}(\alpha_j,1)\) are independent, then
\[
        \frac{G_i}{G_i+G_j}
        \sim
        \on{Beta}(\alpha_i,\alpha_j),
        \qquad
        G_i+G_j\sim\on{Gamma}(\alpha_i+\alpha_j,1),
\]
and these two random variables are independent.  Consequently, if
\[
        Y\sim\on{Dir}_n(\alpha_1,\ldots,\alpha_n),
\]
then, conditioned on
\[
        \mc G_{ij}
        :=
        \sigma\bigl((Y_k)_{k\notin\{i,j\}}\bigr),
\]
one has
\[
        (Y_i,Y_j)
        \stackrel d=
        (B(Y_i+Y_j)B,(1-B)(Y_i+Y_j)),
        \qquad
        B\sim\on{Beta}(\alpha_i,\alpha_j),
\]
with \(B\) independent of \(\mc G_{ij}\).  Taking
\(\alpha_1=\cdots=\alpha_n=1/2\), the \(\{i,j\}\)-update of \(Q_n\)
resamples \((Y_i,Y_j)\) from this conditional law, leaving
\[
        \bigl((Y_k)_{k\notin\{i,j\}},\,Y_i+Y_j\bigr)
\]
fixed.  Hence each pair update is reversible with respect to \(\pi_n\),
and so is their average \(Q_n\).

The energy chain converges to $\pi_n$ from a deterministic
start.  This follows from the corresponding convergence theorem for
Kac's walk on the sphere. Indeed, let
\[
        \rho:S^{n-1}\to\Delta_{n-1},
        \qquad
        \rho(x)=(x_1^2,\ldots,x_n^2).
\]
For any \(x\in S^{n-1}\),
\[
        \rho_\#P_{n-1}^t(x,\cdot)= Q_n^t(\rho(x),\cdot),
        \qquad
        \rho_\#\sigma_{n-1}=\pi_n .
\]
Therefore, if \(P_{n-1}^t(x,\cdot)\Rightarrow\sigma_{n-1}\) for every
deterministic \(x\in S^{n-1}\), then, for every deterministic
\(y\in\Delta_{n-1}\),
\[
        Q_n^t(y,\cdot)\Rightarrow\pi_n .
\]

From a coordinate start, passing to squared coordinates loses no
total-variation information.

\begin{proposition}\label{prop:sphere-energy}
For every \(t\ge0\),
\begin{equation*}
        \norm{P_{n-1}^t(e_1,\cdot)-\sigma_{n-1}}_{\on{TV}}
        =
        \norm{Q_n^t(e^{(1)},\cdot)-\pi_n}_{\on{TV}} .
\end{equation*}
\end{proposition}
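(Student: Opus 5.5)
The plan is to use sign symmetry. Let \(H=\{\pm1\}^n\) act on \(S^{n-1}\) by coordinatewise sign flips, \((hx)_i=h_ix_i\). Two facts drive the argument: \(\sigma_{n-1}\) is \(H\)-invariant, and the fibres of \(\rho\) are exactly the \(H\)-orbits. The main step is to show that \(P_{n-1}^t(e_1,\cdot)\) is also \(H\)-invariant for every \(t\ge1\). For \(t=0\) the statement is trivial, since both sides equal \(1\): \(\delta_{e_1}\) and \(\delta_{e^{(1)}}\) are singular with respect to the respective uniform measures.

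Once invariance is known for both measures, write \(\mu=P_{n-1}^t(e_1,\cdot)\), \(\nu=\sigma_{n-1}\) and let \(\bar\cdot\) denote averaging over \(H\). For any Borel \(A\subset S^{n-1}\), invariance gives \(\mu(A)-\nu(A)=\mu(\bar{\one}_A)-\nu(\bar{\one}_A)\). The function \(\bar{\one}_A\) takes values in \([0,1]\) and is \(H\)-invariant, so it factors as \(g\circ\rho\) with \(g\) measurable. Measurability holds because \(\rho\) has the measurable section \(y\mapsto(\sqrt{y_1},\dots,\sqrt{y_n})\), so one can take \(g(y)=\bar{\one}_A(\sqrt y)\). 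Hence
\[
\abs{\mu(A)-\nu(A)}=\abs{\rho_\#\mu(g)-\rho_\#\nu(g)}\le\norm{\rho_\#\mu-\rho_\#\nu}_{\on{TV}}.
\]
Combined with the trivial reverse inequality (a pushforward cannot increase total variation) and the identities \(\rho_\#\mu=Q_n^t(e^{(1)},\cdot)\), \(\rho_\#\nu=\pi_n\) stated above, this yields the equality.

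For invariance I would not try to prove it for the kernel in general, because it fails at \(t=0\) and more generally from starts with zero coordinates. Instead I would argue as follows. Each step applies a rotation \(R_{ij}(\Theta)\) with \(\Theta\) uniform. For a single flip \(h=\on{diag}\) with \(-1\) in coordinate \(i\), we have \(hR_{ij}(\theta)h=R_{ij}(-\theta)\), and for a flip in a coordinate \(k\notin\{i,j\}\), \(h\) commutes with \(R_{ij}(\theta)\). Write \(X_t=R_tR_{t-1}\cdots R_1e_1\) and fix a sign flip \(h_k\) in coordinate \(k\). If coordinate \(k\) has never been touched by time \(t\), then \(X_{t,k}=0\) and \(h_kX_t=X_t\). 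Otherwise, let \(s\) be the last step touching \(k\). Then
\[
h_kX_t=R_t\cdots R_{s+1}\,h_kR_sh_k'\,\cdots,
\]
and I would rewrite \(h_kR_s=R_s'h'\), where \(R_s'\) has angle \(\pi-\Theta_s\) or \(-\Theta_s\), depending on which coordinate of the pair is \(k\), and \(h'\) flips the other coordinate of the pair.

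Carrying \(h'\) backwards through the earlier steps is messy. A cleaner route is to note directly that, for each fixed pair \(\{i,j\}\), the step measure \(\mu_{ij}=\on{Law}(R_{ij}(\Theta))\) is invariant under left multiplication by \(\on{diag}\) flipping \(i\): the map \(\theta\mapsto\pi-\theta\) preserves the uniform law, and the identity \(h_iR_{ij}(\theta)=R_{ij}(\pi-\theta)h_j\) holds, so \(h_iR_{ij}(\Theta)\stackrel d=R_{ij}(\Theta)h_j\). Condition on the pair sequence. Then \(h_kX_t\stackrel d=R_t\cdots R_{s+1}R_s h_{k'}R_{s-1}\cdots R_1e_1\), where \(k'\) is the partner of \(k\) at step \(s\). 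Here I use that \(h_k\) commutes with the later rotations, which do not involve \(k\). Now induct backwards in \(s\), moving the flip \(h_{k'}\) further back each time. The flip eventually reaches \(e_1\). There it either acts as \(h_1e_1=-e_1\), which is absorbed because \(R_{1j}(\Theta)(-e_1)\stackrel d=R_{1j}(\Theta)e_1\) via \(\Theta\mapsto\Theta+\pi\) at the first step touching coordinate \(1\) (such a step exists, since the chain of partners traces back through steps involving \(1\)), or it acts trivially on a zero coordinate. Making this backward induction rigorous with the conditioning is the main obstacle; I would formalize it as an induction on \(t\) with the hypothesis that \(\on{Law}(X_t)\) is invariant under \(h_k\) for every \(k\) in the support set \(\{k:X_{t,k}\neq0\}\), together with \(X_{t,k}=0\) off that set.
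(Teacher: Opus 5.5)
\textbf{There is a genuine gap.} Your central claim, that $P_{n-1}^t(e_1,\cdot)$ is $H$-invariant for every $t\ge1$, is false, and your total-variation comparison depends on it.

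With probability $(1-2/n)^t$ coordinate $1$ is never selected, and on that event $X_t=e_1$. So $P_{n-1}^t(e_1,\{e_1\})\ge(1-2/n)^t$. On the other hand $P_{n-1}^t(e_1,\{-e_1\})=0$. Once coordinate $1$ has been selected, its last update sets $X_{t,1}^2=Us$ with $U\sim\on{Beta}(1/2,1/2)$ independent of $s\le1$, so $X_{t,1}^2<1$ almost surely. Hence $h_1$ does not preserve the law.

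The faulty step in your sign-transport argument is the sentence ``if coordinate $k$ has never been touched by time $t$, then $X_{t,k}=0$''. This fails for $k=1$, and then there is no step at which the substitution $\Theta\mapsto\Theta+\pi$ can absorb the sign. Your proposed induction hypothesis fails for the same reason on the event that the first pair avoids coordinate $1$. Concretely, the identity $\mu(A)-\nu(A)=\mu(\bar{\one}_A)-\nu(\bar{\one}_A)$ breaks for $A=\{e_1\}$. The left side is $(1-2/n)^t$. But $\bar{\one}_A=\tfrac12$ on $\{\pm e_1\}$ and $0$ elsewhere, so the right side is $\tfrac12(1-2/n)^t$.

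\textbf{How to repair it.} What is true is the decomposition $P_{n-1}^t(e_1,\cdot)=(1-2/n)^t\delta_{e_1}+\mu'$ with $\mu'$ $H$-invariant. This is easier to see forward than by backward transport. For every $h\in H$ one has $hR_{ij}(\Theta)h\stackrel d=R_{ij}(\Theta)$, so a step with an independent uniform angle preserves $H$-invariance. Conditionally on the pair sequence, at the first time coordinate $1$ is selected (with partner $j$), the walk is uniform on the unit circle of the $(1,j)$-plane, which is an $H$-invariant law; it stays invariant afterwards.

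The atom must then be handled separately. Since $\sigma_{n-1}(\{\pm e_1\})=0$, the supremum defining the total variation distance may be restricted to sets $A\supseteq\{\pm e_1\}$. For such $A$, $\bar{\one}_A=\one_A=1$ on the $H$-invariant set $\{\pm e_1\}$, and $\mu$ is $H$-invariant off this set. So $\mu(A)=\mu(\bar{\one}_A)$ does hold there, and your factorization $\bar{\one}_A=g\circ\rho$ through the section $y\mapsto\sqrt y$ finishes the proof.

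This repaired argument is the paper's proof in different language. The paper obtains the reverse inequality from a kernel $S$ that puts independent symmetric signs on the nonzero coordinates, \emph{except} that $S(e^{(1)},\cdot)=\delta_{e_1}$ rather than the symmetric law on $\{\pm e_1\}$ --- precisely to accommodate this atom. It then proves $P_{n-1}^t(e_1,\cdot)=Q_n^t(e^{(1)},\cdot)S$ and $\sigma_{n-1}=\pi_nS$, and concludes by contraction under $S$.
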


\begin{proof}
By the definition of the energy chain,
\[
        \rho_\#P_{n-1}^t(e_1,\cdot)=Q_n^t(e^{(1)},\cdot),
        \qquad
        \rho_\#\sigma_{n-1}=\pi_n .
\]
By contraction of total variation under measurable maps,
\[
        \norm{Q_n^t(e^{(1)},\cdot)-\pi_n}_{\on{TV}}
        \le
        \norm{P_{n-1}^t(e_1,\cdot)-\sigma_{n-1}}_{\on{TV}} .
\]

It remains to prove the reverse inequality.  For
\(y\in\Delta_{n-1}\setminus\{e^{(1)}\}\), let \(S(y,\cdot)\) be the
probability measure on
\[
        \rho^{-1}(y)
        =
        \{x\in S^{n-1}:x_i^2=y_i\text{ for all }i\}
\]
obtained by assigning independent symmetric signs to the nonzero
coordinates \(\sqrt{y_i}\), and setting the zero coordinates equal to
zero.  At \(y=e^{(1)}\), set
\[
        S(e^{(1)},\cdot)=\delta_{e_1},
\]
rather than the symmetric law on $\{\pm e_1\}$, in order to be consistent with our starting configuration $e_1$ for the Kac walk. If \(\nu\) is a probability measure on \(\Delta_{n-1}\), write
\[
        \nu S(A):=\int_{\Delta_{n-1}}S(y,A)\,\nu(dy),
        \qquad A\subseteq S^{n-1}\text{ measurable}.
\]
Then
\begin{equation}\label{eq:haar-energy-factorization}
        \sigma_{n-1}=\pi_nS .
\end{equation}

We claim that, for every \(t\ge0\),
\begin{equation}\label{eq:kac-energy-factorization}
        P_{n-1}^t(e_1,\cdot)
        =
        (Q_n^t(e^{(1)},\cdot)) S .
\end{equation}
The identity holds at \(t=0\).  Assume it holds at time \(t\), and
condition on \(\rho(X_t)=y\).  By the induction hypothesis, \(X_t\) has
law \(S(y,\cdot)\) on the fiber over \(y\).

Suppose the next chosen pair is \(\{i,j\}\), and write
\(s=y_i+y_j\).  Conditioned on the coordinates outside \(\{i,j\}\), the
uniform Kac angle replaces the selected two coordinates by a uniform
point on the circle of radius \(\sqrt{s}\).  Hence
\[
        (X_{t+1,i}^2,X_{t+1,j}^2)
        \stackrel d=
        (Us,(1-U)s),
        \qquad
        U\sim\on{Beta}(1/2,1/2),
\]
and all other squared coordinates are unchanged.  This is the
\(\{i,j\}\)-update of the energy chain.

We now identify the conditional law on the new fiber.  If \(s>0\), a
uniform point on the circle of radius \(\sqrt{s}\) has independent
symmetric signs conditioned on its coordinate squares.  If \(s=0\), both
selected coordinates remain zero.  The coordinates outside \(\{i,j\}\)
are unchanged and keep the conditional sign law from time \(t\).  Thus,
apart from \(e^{(1)}\), the conditional law of \(X_{t+1}\) given
\(\rho(X_{t+1})\) is \(S(\rho(X_{t+1}),\cdot)\).

At \(e^{(1)}\), the convention above gives the correct conditional law.
The only atom of \( Q_n^{t+1}(e^{(1)},\cdot) \) at \(e^{(1)}\) comes from
paths on which coordinate \(1\) has not been selected; on this event the
Kac walk is still at \(e_1\).  Hence the conditional law on this fiber is
\(S(e^{(1)},\cdot)=\delta_{e_1}\).  This proves
\eqref{eq:kac-energy-factorization} at time \(t+1\).

Finally, using \eqref{eq:kac-energy-factorization},
\eqref{eq:haar-energy-factorization}, and contraction of total variation
under the Markov kernel \(S\), we get
\[
        \norm{P_{n-1}^t(e_1,\cdot)-\sigma_{n-1}}_{\on{TV}}
        =
        \norm{Q_n^t(e^{(1)},\cdot)S-\pi_nS}_{\on{TV}}
        \le
        \norm{Q_n^t(e^{(1)},\cdot)-\pi_n}_{\on{TV}} .
\]
This proves the reverse inequality.
\end{proof}

\subsection{A scale-vector representation}
\label{sub:scale-vector-representation}

For the upper bound, we use a different representation of the law of the
energy chain.  For \(a=(a_1,\ldots,a_n)\in[0,\infty)^n\setminus\{0\}\),
let \(F_a\) be the law on \(\Delta_{n-1}\) of
\[
        \left(
        \frac{a_1G_1}{\sum_k a_kG_k},\ldots,
        \frac{a_nG_n}{\sum_k a_kG_k}
        \right),
        \qquad
        G_i\stackrel{\mr{iid}}{\sim}\on{Gamma}(1/2,1).
\]
Since
\(a\ne0\), the denominator is positive almost surely.  In particular,
\[
        F_{\one}=\pi_n,
        \qquad
        F_{e^{(1)}}=\delta_{e^{(1)}}.
\]
The law \(F_a\) depends only on the projective class of \(a\):
\[
        F_{\lambda a}=F_a,
        \qquad \lambda>0.
\]
For a random scale vector \(A\), \(\mb E[F_A]\) denotes the probability
measure obtained by averaging \(F_A\) over \(A\).

Define \((A_t)_{t\ge0}\) on \([0,\infty)^n\setminus\{0\}\) as follows.
Start from
\[
        A_0=e^{(1)}.
\]
At each step, choose an unordered pair \(\{i,j\}\subset[n]\) uniformly,
choose \(B\sim\on{Beta}(1/2,1/2)\) independently, and set
\begin{equation}\label{eq:hidden-update}
        A_i'=A_j'=BA_i+(1-B)A_j,
        \qquad
        A_k'=A_k\quad(k\notin\{i,j\}).
\end{equation}

The beta variable \(B\) in \eqref{eq:hidden-update} comes from the ratio
\(G_i/(G_i+G_j)\) in the proof below.  The transition of \(Q_n\) itself
uses an independent \(U\sim\on{Beta}(1/2,1/2)\).

\begin{proposition}
\label{prop:hidden-rep}
For every \(t\ge0\),
\begin{equation*}
        Q_n^t(e^{(1)},\cdot) =\mb E[F_{A_t}] .
\end{equation*}
\end{proposition}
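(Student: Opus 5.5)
The plan is to prove the identity by induction on \(t\), reducing it to a one-step intertwining identity between the kernel \(Q_n\) and the scale-vector update \eqref{eq:hidden-update}. Concretely, for a fixed \(a\in[0,\infty)^n\setminus\{0\}\), write \(a^{(ij,B)}\) for the vector obtained from \(a\) by the update \eqref{eq:hidden-update} on the pair \(\{i,j\}\) with beta variable \(B\). The claim to establish is that, for every fixed pair \(\{i,j\}\), if \(Y\sim F_a\) and \(Y'\) is obtained from \(Y\) by the \(\{i,j\}\)-update of \(Q_n\) with an independent \(U\sim\on{Beta}(1/2,1/2)\), then
\[
        \on{Law}(Y')=\mb E_B\bigl[F_{a^{(ij,B)}}\bigr],
        \qquad B\sim\on{Beta}(1/2,1/2).
\]
Averaging over the uniform pair then gives \(F_aQ_n=\mb E[F_{A_1}\mid A_0=a]\).

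Given this, the induction is routine. The base case \(Q_n^0(e^{(1)},\cdot)=\delta_{e^{(1)}}=F_{e^{(1)}}\) holds. If \(Q_n^t(e^{(1)},\cdot)=\mb E[F_{A_t}]\), then
\[
        Q_n^{t+1}(e^{(1)},\cdot)=\mb E[F_{A_t}]Q_n=\mb E\bigl[F_{A_t}Q_n\bigr]=\mb E\bigl[\mb E[F_{A_{t+1}}\mid A_t]\bigr]=\mb E[F_{A_{t+1}}].
\]
This uses linearity of mixtures and the Markov property of \((A_t)\).

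For the one-step identity, I would realize \(Y\) as \(Y_k=a_kG_k/S\) with \(S=\sum_k a_kG_k\) and \(G_k\) i.i.d.\ \(\on{Gamma}(1/2,1)\). By the beta-gamma algebra recalled in \cref{sub:squared-chain}, I write \(G_i=BE\) and \(G_j=(1-B)E\), where \(E=G_i+G_j\sim\on{Gamma}(1,1)\) and \(B\sim\on{Beta}(1/2,1/2)\) are independent of each other and of \((G_k)_{k\ne i,j}\). Then
\[
        a_iG_i+a_jG_j=cE,
        \qquad
        c:=Ba_i+(1-B)a_j.
\]
The \(Q_n\)-update replaces \((a_iG_i,a_jG_j)\) by \((UcE,(1-U)cE)\) and leaves the sum \(S\) unchanged. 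Applying the beta-gamma algebra in reverse, \(G_i':=UE\) and \(G_j':=(1-U)E\) are i.i.d.\ \(\on{Gamma}(1/2,1)\). They are independent of \(B\), since \((U,E)\) is independent of \(B\), and they are independent of the other \(G_k\). Therefore \(Y'\) has exactly the form
\[
        \Bigl(\frac{a'_kG'_k}{\sum_l a'_lG'_l}\Bigr)_k,
        \qquad a'=a^{(ij,B)},\quad G'_k=G_k\ (k\ne i,j).
\]
Conditionally on \(B\), this has law \(F_{a'}\), which proves the claim.

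The step needing the most care is this independence bookkeeping. One must check that \((G_i',G_j')\) is jointly independent of \(B\) and of \((G_k)_{k\ne i,j}\), so that conditioning on \(B\) yields a genuine \(F_{a'}\) law. One must also handle the degenerate case \(a_i=a_j=0\). In that case \(c=0\), both selected coordinates vanish before and after the update, and \(a'=a\), so the identity holds trivially. Finally, one must confirm that \(a'\neq0\) always: the total \(\sum_k a'_k\) can only vanish if \(a=0\), so \(F_{a'}\) is always well defined.
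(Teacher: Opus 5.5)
Your proposal is correct and follows the paper's proof essentially step for step. Both arguments use the representation $a_kG_k$, split $(G_i,G_j)=(BE,(1-B)E)$ by beta--gamma algebra so that $a_iG_i+a_jG_j=(Ba_i+(1-B)a_j)E$, and then use $U$ to re-split $E$ into fresh i.i.d.\ $\on{Gamma}(1/2,1)$ variables, before averaging over the pair and inducting. Your explicit check that $(UE,(1-U)E)$ is independent of $B$ and of $(G_k)_{k\ne i,j}$ is the point that makes conditioning on $B$ yield a genuine $F_{a'}$ law; the paper's proof passes over it quickly, so your version is slightly more careful.
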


\begin{proof}
It suffices to prove the one-step identity.  Fix
\(a\in[0,\infty)^n\setminus\{0\}\), and let
\[
        Z_k=a_kG_k,
        \qquad
        G_k\stackrel{\mr{ind}}{\sim}\on{Gamma}(1/2,1).
\]
Then \(Z/\sum_k Z_k\) has law \(F_a\).  If \(a_k=0\), the coordinate
\(Z_k\) is interpreted as identically zero.

Fix the selected pair \(\{i,j\}\).  Let \(U\sim\on{Beta}(1/2,1/2)\) be
independent of the variables above.  The \(Q_n\)-update on this pair is
the normalization of the replacement
\[
        Z_i'=U(Z_i+Z_j),
        \qquad
        Z_j'=(1-U)(Z_i+Z_j),
        \qquad
        Z_k'=Z_k \quad (k\notin\{i,j\}).
\]
Since \(Z_i+Z_j\) is preserved, the normalizing denominator is unchanged.

Now write
\[
        R=G_i+G_j,
        \qquad
        B=\frac{G_i}{G_i+G_j}, \qquad a^* = B a_i + (1-B) a_j.
\]

Notice that
\[ (Z_i', Z_j') = a^*(UR, (1-U)R). \]

By the beta-gamma algebra, $R \sim \on{Gamma}(1,1)$, and so by independence of $R$ and $U$,
\[ (Z_i',Z_j') \sim a^* (\on{Gamma}(1/2,1), \on{Gamma}(1/2,1) ). \]

Set
\[ a_i' = a_j' = a^*, \qquad a_k' = a_k \quad (k \ne i,j). \]

Notice that $a^* = Ba_i + (1-B)a_j$ is exactly the law of the averaging process $A_i$.

By induction, $Z_k' \sim a_k \on{Gamma}(1/2,1)$ for $k \ne i,j$.  Thus $Z'/\sum_k Z_k' \sim F_{a'}$.  Averaging over $i,j$ and iterating from $F_{e^{(1)}} = Q_n^0(e^{(1)},\cdot) = \delta_{e^{(1)}}$ gives the result. \qedhere


\end{proof}

\section{The lower bound}\label{sec:lower}

By \cref{prop:sphere-energy}, it is enough to prove the lower bound for
the energy chain started from \(e^{(1)}\).  We separate this chain from
stationarity by the event that some coordinate has energy at least
\(n^{-\beta}\).  Under the stationary law this event has vanishing
probability for every fixed \(\beta<1\).  Before time
\(C_{\mr{BRW}}n\log n\), however, we will show that a portion of the
initial unit mass remains above this scale with high probability.

\begin{lemma}\label{lem:stationary-max}
For every fixed \(0<\beta<1\),
\[
        \pi_n\left(\max_{1\le i\le n}y_i\ge n^{-\beta}\right)
        \longrightarrow0 .
\]
\end{lemma}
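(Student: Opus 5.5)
We use the gamma representation of \(\pi_n\): if \(G_1,\ldots,G_n\stackrel{\mr{iid}}{\sim}\on{Gamma}(1/2,1)\) and \(S_n:=\sum_kG_k\), then \(y=(G_i/S_n)_i\sim\pi_n\). The idea is that the largest \(G_i\) is only of order \(\log n\), while \(S_n\) is of order \(n\), so every coordinate is \(O(\log n/n)\). This is far below \(n^{-\beta}\) for any fixed \(\beta<1\).

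To make this precise, split the event according to whether the denominator is small. For every \(n\),
\[
        \pi_n\Bigl(\max_i y_i\ge n^{-\beta}\Bigr)
        \le
        \mb P\bigl(S_n\le n/4\bigr)
        +
        \mb P\Bigl(\max_i G_i\ge \tfrac14 n^{1-\beta}\Bigr).
\]
On the complement of both events, \(\max_i G_i/S_n< n^{1-\beta}/n=n^{-\beta}\).

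For the first term, note that \(\mb E G_i=1/2\) and \(\on{Var}G_i=1/2\). Chebyshev's inequality therefore gives
\[
        \mb P(S_n\le n/4)\le \frac{n/2}{(n/4)^2}\to0.
\]
Alternatively, one can use \(S_n\sim\on{Gamma}(n/2,1)\) together with a Chernoff bound.

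For the second term, apply a union bound together with the gamma tail. For \(x\ge1\),
\[
        \mb P(G_1\ge x)
        =\frac{1}{\Gamma(1/2)}\int_x^\infty s^{-1/2}e^{-s}\,ds
        \le e^{-x}.
\]
Hence
\[
        \mb P\Bigl(\max_iG_i\ge \tfrac14n^{1-\beta}\Bigr)
        \le n\exp\bigl(-\tfrac14n^{1-\beta}\bigr)\to0,
\]
since \(1-\beta>0\).

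There is no real obstacle in this argument. The only point needing care is that \(\beta<1\) is used exactly to make the stretched-exponential tail beat the factor \(n\) from the union bound.
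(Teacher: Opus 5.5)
Your argument is correct, but it handles the normalization differently from the paper. The paper stays on the simplex. It uses that each marginal of \(\pi_n\) is \(\on{Beta}(1/2,(n-1)/2)\), takes a union bound over the \(n\) coordinates, and estimates the beta tail integral directly via \(\Gamma(n/2)/\Gamma((n-1)/2)\le C\sqrt n\) and \((1-u)^{(n-3)/2}\le e^{-(n-3)u/2}\). This yields the explicit bound \(Cn^{(1+\beta)/2}\exp\{-\tfrac{n-3}{2}n^{-\beta}\}\).

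You instead work with the unnormalized gamma variables and decouple numerator and denominator. You control \(S_n\) by Chebyshev and \(\max_i G_i\) by a union bound with the elementary tail \(\mathbb P(G_1\ge x)\le e^{-x}\). That tail bound needs \(x\ge 1\), i.e.\ \(\tfrac14 n^{1-\beta}\ge1\), which holds for large \(n\) and is harmless in the limit.

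Your route needs only the mean and variance of \(\on{Gamma}(1/2,1)\) and no gamma-function asymptotics. Taking the threshold \(C\log n\) in place of \(\tfrac14 n^{1-\beta}\) also shows directly that \(\max_i y_i=O(\log n/n)\) with high probability. The paper's route gives a much sharper rate: stretched-exponential, versus the \(O(1/n)\) your Chebyshev step imposes. As you note, a Chernoff bound for \(S_n\sim\on{Gamma}(n/2,1)\) would remove that limitation. Either rate suffices, since the lemma only asks that the probability tend to zero.
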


\begin{proof}
Under \(\pi_n\), for any $i$, \(y_i\sim\on{Beta}(1/2,(n-1)/2)\), with density
\[
        f_n(u)=
        \frac{\Gamma(n/2)}{\sqrt\pi\,\Gamma((n-1)/2)}
        u^{-1/2}(1-u)^{(n-3)/2},
        \qquad 0<u<1 .
\]
By the union bound and
\[
        \frac{\Gamma(n/2)}{\Gamma((n-1)/2)}\le C\sqrt n,
\]
we get
\[
\begin{aligned}
        \pi_n\left(\max_i y_i\ge n^{-\beta}\right)
        &\le
        Cn^{3/2}
        \int_{n^{-\beta}}^1
        u^{-1/2}(1-u)^{(n-3)/2}\,du .
\end{aligned}
\]
For \(a\in(0,1)\),
\[
        \int_a^1 u^{-1/2}(1-u)^{(n-3)/2}\,du
        \le
        \int_a^1 u^{-1/2}e^{-(n-3)u/2}\,du
        \le
        C n^{-1}a^{-1/2}e^{-(n-3)a/2}.
\]
Taking \(a=n^{-\beta}\) gives
\[
        \pi_n\left(\max_i y_i\ge n^{-\beta}\right)
        \le
        Cn^{(1+\beta)/2}
        \exp\left\{-\frac{n-3}{2}n^{-\beta}\right\}
        \longrightarrow0 . \qedhere
\]
\end{proof}

Let
\[
        L_t:=\max_{1\le i\le n}Y_{t,i}
\]
denote the largest coordinate energy at time \(t\).  The lower bound is
reduced to the following persistence estimate.

\begin{proposition}\label{prop:persistence}
Fix \(0<c<C_{\mr{BRW}}\).  There exists \(\beta\in(0,1)\) such that,
with \(t_n=\ceil{c n\log n}\),
\[
        \mb P_{e^{(1)}}\left(L_{t_n}\ge n^{-\beta}\right)
        \longrightarrow1 .
\]
\end{proposition}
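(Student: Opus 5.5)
Fix \(c<C_{\mr{BRW}}\), so that \(2c\gamma_{\mr{BRW}}<1\), and choose \(\beta\in(2c\gamma_{\mr{BRW}},1)\). The plan is to couple the energy chain started from \(e^{(1)}\) with the continuous-time branching random walk of \cref{sub:cutoff-constant}, run for time \(T_n\approx 2c\log n\), and then apply Biggins's theorem. First we pass to continuous time. Poissonize the update times: updates occur at rate \(\binom n2\) (after time rescaling), so each coordinate is touched at rate about \(2/n\) per discrete step. By time \(t_n\) the number of updates concentrates around \(t_n\) up to \(O(\sqrt{t_n})\), which changes the time only by a factor \(1+o(1)\). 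Since \(L_t\) is not monotone, it is cleaner to prove the estimate for a single "tracked" particle. That particle's mass can only decrease when it is selected, so the small time shift only costs \(n^{o(1)}\) in mass.

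Second, we build the embedded branching structure. Follow the initial unit of mass: each occupied coordinate carries some mass. When an occupied coordinate \(i\) is paired with \(j\), the new masses are \(U(y_i+y_j)\ge Uy_i\) and \((1-U)(y_i+y_j)\ge(1-U)y_i\). So if we declare that \(y_i\) branches into children at logarithmic losses \(-\log U\) and \(-\log(1-U)\) sitting at coordinates \(i\) and \(j\), each particle's true mass dominates \(\exp(-\text{position})\), as long as distinct particles occupy distinct coordinates. The real dynamics deviates from the BRW in two ways. First, a particle can be paired with a coordinate already occupied by another particle, a "collision". Second, the two children land on coordinates that then evolve at rate \(2/(n)\) each, which matches the rate-1 branching after rescaling time by \(n/2\).

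Third, we control collisions, which is the main obstacle. We only need the leftmost particle, whose trajectory consists of about \(N\approx 2c\log n\) branching events. A pairing at a collision is still fine for a lower bound on mass: the combined mass \(y_i+y_j\ge y_i\), so the domination \(\text{mass}\ge e^{-\text{position}}\) persists along any single lineage even with collisions. What breaks is only the independence and the branching structure (two particles merging into one coordinate). Along a single ancestral line this is harmless: at each selection of the lineage's coordinate we get an independent \(\on{Beta}(1/2,1/2)\) split of a mass at least the lineage mass. So I would instead argue by a first/second-moment (or Biggins-type) count over "lineage paths". These are sequences of choices (stay or move to the partner) at each selection event of the current coordinate. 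Such a path is a random walk with i.i.d. steps \(-\log U\) or \(-\log(1-U)\) at Poisson times of rate \(2/n\). Different paths share randomness exactly as in the BRW, except when two paths' coordinates coincide. The genealogical tree of paths at time \(t_n\) has at most \(n^{O(1)}\) relevant branches (those with position \(\le\beta\log n\)). The probability that two given ones interact via an extra coincidence is \(O(\log n/n)\) per event. Hence the process restricted to particles with position \(\le \beta\log n\) agrees with the BRW with high probability, or at least dominates a truncated BRW.

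Concretely, I would use the standard truncation: run the BRW killed above level \(\beta\log n\). Its population at any time is at most \(n^{\beta'}\) for some \(\beta'<1\) with high probability; this is a first-moment bound using \(m(\lambda)\) at \(\lambda\) near 0, giving expected count \(\le e^{T(m(\lambda)-1)}n^{\lambda\beta}\). Then the expected number of collisions involving these particles over \(O(n\log n)\) steps is \(o(1)\) when \(\beta'<1/2\), or is handled by the mass-monotonicity remark otherwise. Finally, Biggins's theorem gives \(B_{T_n}\le(2c\gamma_{\mr{BRW}}+o(1))\log n<\beta\log n\) with high probability, so \(L_{t_n}\ge n^{-\beta}\).
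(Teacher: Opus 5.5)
The gap is the collision control. You flag it as the main obstacle, but your argument does not resolve it for $c$ near $C_{\mr{BRW}}$.

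Your concrete device is the BRW killed above the flat level $\beta\log n$. Since $\sum_u e^{-V_u(s)}=1$, this population is at most $n^{\beta}$, and the bound is essentially sharp. Indeed, $\inf_{\lambda}\{\lambda\beta\log n+s(m(\lambda)-1)\}$, maximized over $s\le T_n$, equals $\beta\log n$, with optimal $\lambda=1$. (Your choice ``$\lambda$ near $0$'' is unusable: $m(0^+)=2$, so that bound is $e^{T_n}=n^{2c}$.)

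So the best available $\beta'$ is $\beta$ itself. Since $\beta>2c\gamma_{\mr{BRW}}$, the condition $\beta'<1/2$ holds only when $c<C_{\mr{BRW}}/2$. For larger $c$, the expected number of updates pairing two relevant coordinates is of order $n^{2\beta-1+o(1)}\to\infty$. Hence the relevant part of the chain does not agree with the BRW with high probability.

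The fallback ``mass-monotonicity remark'' does not fill this. It gives $y\ge e^{-\text{position}}$ along each single lineage. But when two relevant lineages are paired, the four resulting lineages sit on two coordinates and share all future splits. The lineage family is then a BRW with duplicated, perfectly correlated subtrees. Nothing in your proposal shows that its minimum is at most the BRW minimum, and correlation removes exactly the independent branching that drives the minimum below typical positions. For the same reason, ``$n^{O(1)}$ relevant branches times $O(\log n/n)$'' is $o(1)$ only when that exponent is below $1/2$.

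The paper sidesteps this by never letting the comparison population grow. It splits the $t_n$ updates into $O(\log n)$ blocks of $\floor{nT/2}$ updates with $T$ fixed, and restarts each block from a single mass, namely the largest one left by the previous block. Within a block $\mb E N_h^2\le e^{3T}$, so a collision has probability $O_T(n^{-1})$, and a union bound over blocks costs $O(\log n/n)$.

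The price of discarding all other masses is that the total loss becomes a sum of i.i.d.\ block minima. These have mean $\mb E B_{n,T}\to\mb E B_T$, by weak convergence plus uniform integrability. By the $L^1$ form of Biggins's theorem, $\mb E B_T/T\to\gamma_{\mr{BRW}}$, so this price vanishes as $T\to\infty$. Chebyshev then bounds the total loss by $2c(\gamma_{\mr{BRW}}+4\eta)\log n$.

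To rescue your global route you would need a genuinely additional input. One option is a sloped barrier $(\gamma_{\mr{BRW}}+\eta)s+K$, which keeps the population at $n^{O(\eta)}$; you would then also need survival of the killed BRW with probability tending to $1$ as $K\to\infty$. Another is a many-to-one estimate showing that no ancestral line of a near-minimal particle meets another relevant coordinate, together with a coupling that realizes collision-free BRW lines as actual lineages.
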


\begin{proof}[Proof of the lower bound in \cref{thm:main-cutoff}]
Let \(0<c<C_{\mr{BRW}}\), and set \(t_n=\ceil{c n\log n}\).  By
\cref{prop:persistence}, there is \(\beta\in(0,1)\) such that
\[
        \mb P_{e^{(1)}}\left(\max_iY_{t_n,i}\ge n^{-\beta}\right)
        \longrightarrow1 .
\]
By \cref{lem:stationary-max},
\[
        \pi_n\left(\max_i y_i\ge n^{-\beta}\right)\longrightarrow0 .
\]
Therefore
\[
        \norm{Q_n^{t_n}(e^{(1)},\cdot)-\pi_n}_{\on{TV}}
        \longrightarrow1 .
\]
The reduction in \cref{prop:sphere-energy} gives the same lower bound for
the Kac walk on the sphere.
\end{proof}

It remains to prove \cref{prop:persistence}.  We use the branching random
walk introduced in \cref{sub:cutoff-constant}.  For convenience, recall
the definition.  The process starts with one particle at \(0\).  Each
particle branches at rate \(1\), independently of the others; when a
particle at \(x\) branches, it is replaced by two particles at
\[
        x-\log U,
        \qquad
        x-\log(1-U),
\]
where \(U\sim\on{Beta}(1/2,1/2)\).  Let \(\mc A_T\) denote the set of
particles present at time \(T\).  For \(u\in\mc A_T\), write \(V_u(T)\)
for the position of \(u\) at time \(T\), and set
\[
        B_T:=\min_{u\in\mc A_T} V_u(T).
\]
As discussed in \cref{sub:cutoff-constant},
\begin{equation}\label{eq:brw-speed}
        \frac{B_T}{T}\xrightarrow[T\to\infty]{}
        \gamma_{\mr{BRW}}
        :=
        \sup_{\lambda>0}\frac{1-m(\lambda)}{\lambda}
\end{equation}
almost surely and in \(L^1\), where
\[
        m(\lambda)
        :=
        \mb E\bigl[U^\lambda+(1-U)^\lambda\bigr]
        =
        \frac{2\Gamma(\lambda+1/2)}
             {\sqrt\pi\,\Gamma(\lambda+1)} .
\]

Fix \(T<\infty\), and set
\[
        b_n(T):=\floor{{nT}/{2}} .
\]
For this fixed value of \(T\), we call any consecutive sequence of
\(b_n(T)\) updates a block.  We now describe the comparison used inside
one block.

Fix a starting time \(s\), a coordinate \(r\), and a mass
\(z\le Y_{s,r}\).  The construction uses the same future coordinate pairs
and beta variables as the energy chain.  At each time it consists of a
finite collection of coordinate-mass pairs \((\ell,m)\), interpreted as
a mass \(m\) placed at coordinate \(\ell\).  Initially,
\[
        \mc M_0=\{(r,z)\}.
\]

Suppose the selected pair is \(\{i,j\}\).  If neither \(i\) nor \(j\)
appears as a coordinate in the collection, the collection is unchanged.
If exactly one selected coordinate carries a mass \(m\), then replace
that mass by two masses placed at the selected coordinates, using the
same split as the energy-chain update.  Thus, if the energy-chain update
assigns the proportions \(U\) and \(1-U\) to \(i\) and \(j\),
respectively, the collection receives the masses \(Um\) at \(i\) and
\((1-U)m\) at \(j\).  If both selected coordinates carry masses from the
collection, declare a collision and stop updating the collection.  After
that time, the collection is kept equal to its value just before the
collision.  (Note that this does not happen with high probability.)

Before the collision time, no coordinate carries more than one mass from
the collection.  Moreover, for every update before this time,
\begin{equation}\label{eq:mass-domination-invariant}
        (\ell,m)\in\mc M_h \implies
        Y_{s+h,\ell}\ge m.
\end{equation}
This is true at time \(s\) by assumption.  For the inductive step, if a
selected coordinate carries a mass \(m\) from the collection, then \(m\)
is at most the total actual energy in the selected pair.  The actual
energy and this mass are then split in the same proportions \(U\) and
\(1-U\), so the inequality is preserved.  Coordinates outside the
selected pair are unchanged.

Run the construction through the block starting at time \(s\), using the
stopping convention above.  Define
\[
        B_{n,T}
        :=
        \min_{(\ell,m)\in\mc M_{b_n(T)}}
        \left(-\log\frac{m}{z}\right),
\]
and let \(\mc C_{n,T}\) be the event that a collision occurs during the
block.  Conditioned on the past up to time \(s\), the law of the relative
loss \(B_{n,T}\) depends only on \(n\) and \(T\), not on \(s,r,z\).

\begin{lemma}\label{lem:block-comparison}
For every fixed \(T<\infty\),
\[
        \mb E B_{n,T}\longrightarrow \mb E B_T,
        \qquad
        \sup_n \mb E B_{n,T}^2<\infty .
\]
Moreover,
\[
        \mb P(\mc C_{n,T})=O_T(n^{-1}).
\]
Finally, on \(\mc C_{n,T}^c\), if the construction is started from a mass
\(z\) dominated by the actual coordinate energy, then at the end of the
block
\begin{equation}\label{eq:mass-domination-output}
        \max_iY_{s+b_n(T),i}\ge z e^{-B_{n,T}} .
\end{equation}
\end{lemma}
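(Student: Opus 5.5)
Within one block, I would couple the mass-collection construction with a discrete-time version of the branching random walk, and then pass to continuous time. Two facts drive the comparison. First, when the collection contains $k$ masses at distinct coordinates, a given update touches exactly one of them with probability $k(n-k)/\binom n2 = \frac{2k}{n}(1+O(k/n))$. Second, it touches two of them (a collision) with probability $\binom k2/\binom n2 = O(k^2/n^2)$.

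So each existing mass is independently split with probability about $2/n$ per step. Over $b_n(T)=\floor{nT/2}$ steps, the split times of a single mass converge to a rate-one Poisson process on $[0,T]$ after rescaling time by $2/n$. The split proportions are exactly i.i.d.\ $\on{Beta}(1/2,1/2)$, and the relative loss of a mass is $-\log(m/z)=\sum(-\log U)$ along its ancestral line. Precisely, the genealogy up to the collision time is a discrete branching process. In it, each of the current $k$ particles is the unique particle chosen with probability $\frac{2}{n}\cdot\frac{n-k}{n-1}$ per step, and the split variables are independent of everything else. This converges in law to the continuous-time branching random walk of \cref{sub:cutoff-constant} run until time $T$, jointly in the number of particles and their positions.

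I would make this concrete by an explicit coupling. Give each particle of the BRW an exponential clock, and realize the discrete steps via geometric waiting times with parameter $\approx 2/n$. The two descriptions then agree on the tree shape and split variables with probability $1-O_T(1/n)$, on the event that the number of particles stays bounded. From this, $B_{n,T}\Rightarrow B_T$.

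For the collision bound, condition on $K_h=|\mc M_h|$. Then $\mb P(\mc C_{n,T})\le \sum_{h<b_n(T)}\mb E[\binom{K_h}{2}]/\binom n2$. The count $K_h$ is stochastically dominated by a Yule-type discrete process that increases by one with probability $\le 2K/n$ per step. Hence $\mb E K_h^2\le C e^{C h/n}=O_T(1)$, and the sum is $O(nT)\cdot O_T(1)/n^2=O_T(n^{-1})$.

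For the second moment, I would reuse the argument from \cref{sub:cutoff-constant}: follow the child with larger mass at each split. This gives $0\le B_{n,T}\le(\log 2)N$, where $N\le K_{b_n(T)}-1$ is the number of splits along that line. That count is dominated by $\on{Bin}(b_n(T),2/n)$, which has bounded second moment. Uniform integrability then upgrades convergence in law to $\mb E B_{n,T}\to\mb E B_T$.

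Finally, \eqref{eq:mass-domination-output} follows directly from the invariant \eqref{eq:mass-domination-invariant}, which holds up to the end of the block on $\mc C_{n,T}^c$. Take $(\ell,m)$ attaining the minimum in $B_{n,T}$. Then $Y_{s+b_n(T),\ell}\ge m=ze^{-B_{n,T}}$.

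I expect the main obstacle to be the weak convergence $B_{n,T}\Rightarrow B_T$, specifically handling the $(n-k)/(n-1)$ correction and the rounding in time. The cleanest route is probably to truncate on $\{K\le M\}$, which has probability close to one uniformly in $n$. On that event the per-step hazards are $\frac{2}{n}(1+O(M/n))$, and total-variation closeness of the discrete and continuous genealogies follows by a standard Poisson approximation.
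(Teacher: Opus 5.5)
Your proposal is correct and follows essentially the same route as the paper. The steps match: the collision bound via the second moment of the mass count, which grows by a factor at most $1+6/n$ per step; $B_{n,T}\Rightarrow B_T$ by truncating at a bounded number of masses and matching geometric waiting times, uniformly chosen splitters and independent $\on{Beta}(1/2,1/2)$ splits with the branching random walk; uniform integrability by following the larger piece and dominating by $\on{Bin}(b_n(T),2/n)$; and the final domination bound read off from the invariant. Your explicit coupling with agreement probability $1-O_{T,M}(1/n)$ on the bounded-count event is just a quantitative version of the paper's continuous-mapping argument; the masses are not split independently (at most one split occurs per update), but your competing-hazard description of the genealogy is the right one.
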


We first deduce \cref{prop:persistence} from the block estimate.

\begin{proof}[Proof of \cref{prop:persistence} assuming \cref{lem:block-comparison}]
Fix \(0<c<C_{\mr{BRW}}=(2\gamma_{\mr{BRW}})^{-1}\).  Choose
\(\eta>0\) such that
\begin{equation}\label{eq:eta-choice-lower}
        2c(\gamma_{\mr{BRW}}+4\eta)<1 .
\end{equation}
By the \(L^1\)-convergence in \eqref{eq:brw-speed}, we may choose
\(T<\infty\) such that
\[
        \frac{\mb E B_{T'}}{T'}<\gamma_{\mr{BRW}}+\eta .
\]
Then, by \cref{lem:block-comparison}, for all sufficiently large
\(n\),
\[
        \frac{\mb E B_{n,T}}{T}<\gamma_{\mr{BRW}}+2\eta .
\]

Let
\[
        b_n:=b_n(T)=\floor{{nT}/{2}},
        \qquad
        q_n:=\floor{{t_n}/{b_n}} - 1,
        \qquad
        t_n=\ceil{c n\log n}.
\]
Then
\[
        q_nT=2c\log n+O(1).
\]

Break the first \(t_n\) updates into \(q_n\) consecutive blocks of
length \(b_n\) and one final block of size $t_n - q_n b_n$.  Note that the length of the final block is between $b_n$ and $2b_n$, and we will see this means it can be treated on equal footing to all other blocks.  Start with mass \(1\) at coordinate \(1\).  At the end
of each collision-free block, choose a coordinate-mass pair $(j,m)$ with the maximum $m$.
Use this pair as the initial coordinate and mass for the next
block.

The collision estimate in \cref{lem:block-comparison} applies at
the start of each block, conditioned on the past.  If
\(\mc F_{rb_n}\) denotes the past up to the beginning of block \(r+1\),
then, on the event that no collision has occurred in the first \(r\)
blocks,
\[
        \mb P\left(
        \text{a collision occurs in block }r+1
        \,\middle|\,\mc F_{rb_n}
        \right)
        \le
        C_T n^{-1}.
\]
Similarly, we may apply \cref{lem:block-comparison} to the final block to bound the probability at $C_{2T} n^{-1}$.  Therefore, by a union bound over the \(q_n+1\) blocks,
\[
        \mb P(\text{some collision occurs in any of the }q_n+1\text{ blocks})
        \le
        C_T q_n n^{-1} + C_{2T} n^{-1}
        =
        o(1),
\]
since \(q_n=O(\log n)\).  

On the no-collision event, \eqref{eq:mass-domination-output} applied
successively gives
\[
        L_{q_nb_n}
        \ge
        \exp\left\{-\sum_{r=1}^{q_n}B_{n,T}^{(r)} - B_{n,2T} \right\}.
\]
The random variables
\(B_{n,T}^{(1)},\ldots,B_{n,T}^{(q_n)}\) may be taken to be independent
copies of \(B_{n,T}\).  Indeed, conditioned on the past at the start of a
block, the relative loss over the next \(b_n\) updates has the same
law as \(B_{n,T}\), and different blocks use disjoint sets of future
updates.  The final block has length at most $2T$ and its logarithmic loss is therefore stochastically dominated by the loss of $B_{n,2T}$.  The second-moment bound in
\cref{lem:block-comparison} and Chebyshev's inequality imply
\[
        \sum_{r=1}^{q_n}B_{n,T}^{(r)} + B_{n,2T}
        \le
        (q_n+1)T(\gamma_{\mr{BRW}}+3\eta)
\]
with probability tending to one.



Combining the full blocks with the remaining updates, the total logarithmic loss by time \(t_n\) is at most
\[
        (q_n+1)T(\gamma_{\mr{BRW}}+3\eta)
        =
        2c(\gamma_{\mr{BRW}}+4\eta)\log n+O(1)
\]
with probability tending to one.  Hence
\[
        L_{t_n}
        \ge
        n^{-2c(\gamma_{\mr{BRW}}+4\eta)+o(1)}
\]
with probability tending to one.  Choose \(\beta\in(0,1)\) such that
\[
        2c(\gamma_{\mr{BRW}}+4\eta)<\beta<1,
\]
which is possible by \eqref{eq:eta-choice-lower}.  Then
\[
        \mb P_{e^{(1)}}(L_{t_n}\ge n^{-\beta})\longrightarrow1. \qedhere
\]
\end{proof}

It remains to prove the block estimate.

\begin{proof}[Proof of \cref{lem:block-comparison}]
Since the law of \(B_{n,T}\) depends only on \(n\) and \(T\), it is
enough to consider the construction started from mass \(1\) at coordinate
\(1\).  Let \(N_h:=|\mc M_h|\), with the convention that \(\mc M_h\) is
kept fixed after a collision.

For \(1\le k\le n\), set
\[
        p_{n,k}:=\frac{k(n-k)}{\binom n2},
        \qquad
        q_{n,k}:=\frac{\binom{k}{2}}{\binom n2}.
\]
If no collision has occurred and \(N_h=k\), then \(p_{n,k}\) is the
probability that the next selected pair contains exactly one coordinate
carrying a mass, and \(q_{n,k}\) is the probability that it contains two
such coordinates.

\paragraph{\bf Count and collision bounds.}
Let \(\mc F_h\) denote the sigma-algebra generated by the updates up to
time \(h\).  Since a split increases \(N_h\) by one and all other
outcomes do not increase it,
\begin{align*}
    \mb{E}[N_{h+1}^2 \mid \mc{F}_h] & \leq N_h^2 + \frac{N_h(n-N_h)}{\binom n2} \paren{ (N_h+1)^2 - N_h^2 } \\
    & \leq N_h^2 + \frac{2N_h}{n} (2N_h + 1) \leq \paren{1+\frac6n} N_h^2.
\end{align*}

Consequently, uniformly for
\(h\le b_n(T)=\floor{nT/2}\),
\begin{equation}\label{eq:count-second-moment}
        \mb E N_h^2\le e^{3T}.
\end{equation}

Using the definition of \(q_{n,k}\) and \eqref{eq:count-second-moment},
\[
        \mb P(\mc C_{n,T})
        \le
        \sum_{h=0}^{b_n(T)-1}
        \mb E\left[\frac{N_h^2}{n(n-1)}\right]
        =
        O_T(n^{-1}).
\]

\paragraph{\bf Approximation before the \(K\)-th split.}
Fix \(K\ge2\).  Stop the construction once \(N_h=K\), and let
\(B_{n,T}^{[K]}\) be the minimum logarithmic loss at time \(b_n(T)\) for
this stopped construction.  Define \(B_T^{[K]}\) in the same way for the
branching random walk, stopped when it first has \(K\) particles.  We
claim that
\begin{equation}\label{eq:fixed-k-convergence}
        B_{n,T}^{[K]}\Rightarrow B_T^{[K]} .
\end{equation}

We describe the first \(K-1\) splits.  Label the masses after each split
in any deterministic way.  If the construction currently has \(k<K\)
masses, then
\[
        p_{n,k}
        =
        \frac{2k}{n}+O_K(n^{-2}),
        \qquad
        q_{n,k}
        =
        O_K(n^{-2}).
\]
Thus the number of updates until the next split-or-collision event is
geometric with parameter
\[
        p_{n,k}+q_{n,k}=\frac{2k}{n}+O_K(n^{-2}).
\]
After multiplication by \(2/n\), this waiting time converges to an
exponential random variable of rate \(k\).  The next event is a collision
with probability
\[
        \frac{q_{n,k}}{p_{n,k}+q_{n,k}}=O_K(n^{-1}),
\]
so the probability that a collision occurs before the construction
reaches \(K\) masses tends to zero.

Given that the next event is a split while there are \(k\) masses, the
splitting mass is uniform among the \(k\) masses, because each of their
coordinates has exactly \(n-k\) possible partners not carrying a mass.
The beta variable used at the split is independent of the selected pair
and has law \(\on{Beta}(1/2,1/2)\).  Therefore, if \(W_{n,k}\) is the
number of updates between the \((k-1)\)-st and \(k\)-th splits,
\(R_{n,k}\in[k]\) is the index of the mass that splits, and \(U_{n,k}\)
is the corresponding beta variable, then
\[
\begin{aligned}
&\left(
        \frac{2W_{n,1}}n,\ldots,\frac{2W_{n,K-1}}n,\,
        R_{n,1},\ldots,R_{n,K-1},\,
        U_{n,1},\ldots,U_{n,K-1}
\right) \\
&\qquad\Rightarrow
\left(
        E_1,\ldots,E_{K-1},\,
        R_1,\ldots,R_{K-1},\,
        U_1,\ldots,U_{K-1}
\right),
\end{aligned}
\]
where \(E_k\) is exponential of rate \(k\), \(R_k\) is uniform on
\([k]\), and \(U_k\sim\on{Beta}(1/2,1/2)\), with all these variables
independent except for the deterministic ranges of the \(R_k\)'s.  This
is exactly the corresponding description of the branching random walk
before it reaches \(K\) particles.

The preceding variables determine the logarithmic losses present at time
\(T\), up to the stopping level \(K\): at the \(k\)-th split, the selected
loss \(x\) is replaced by
\[
        x-\log U_k,
        \qquad
        x-\log(1-U_k),
\]
and only splits whose accumulated rescaled time is at most \(T\) are
used.  This deterministic map is continuous except when a split time is
exactly \(T\).  The limiting process has no split at exactly time \(T\)
almost surely, so the continuous mapping theorem proves
\eqref{eq:fixed-k-convergence}.

\paragraph{\bf Removing the restriction to \(K\) masses.}
Since \(N_h\) is nondecreasing until the construction is stopped by a
collision, \eqref{eq:count-second-moment} gives
\[
        \sup_n
        \mb P\left(\max_{h\le b_n(T)}N_h\ge K\right)
        \le
        \frac{e^{3T}}{K^2}.
\]
For the branching random walk, let \(Z_T:=|\mc A_T|\).  Since each
particle branches at rate \(1\) and each branch increases the number of
particles by one,
\[
        \frac{d}{dT}\mb E Z_T=\mb E Z_T,
        \qquad
        \mb E Z_0=1.
\]
Hence \(\mb E Z_T=e^T\), and Markov's inequality gives
\[
        \mb P(|\mc A_T|\ge K)\le \frac{e^T}{K}
        \longrightarrow0
        \qquad\text{as }K\to\infty .
\]
Thus, for every bounded continuous \(f\),
\[
\begin{aligned}
 \limsup_{n\to\infty}
 \left|\mb E f(B_{n,T})-\mb E f(B_T)\right|
 &\le
 \limsup_{n\to\infty}
 \left|\mb E f(B_{n,T}^{[K]})-\mb E f(B_T^{[K]})\right|\\
 &\quad+
 2\norm{f}_\infty
 \limsup_{n\to\infty}
 \mb P\left(\max_{h\le b_n(T)}N_h\ge K\right)\\
 &\quad+
 2\norm{f}_\infty
 \mb P(|\mc A_T|\ge K).
\end{aligned}
\]
The first term is zero for fixed \(K\) by
\eqref{eq:fixed-k-convergence}.  Letting \(K\to\infty\)
proves
\[
        B_{n,T}\Rightarrow B_T .
\]

\paragraph{\bf Uniform integrability.}
During the \(b_n(T)\) updates, keep a single mass as follows.  Start with
the initial mass.  Whenever its current coordinate is selected and the
construction has not yet stopped, split this mass according to the update
and retain the larger of the two pieces.  Conditioned on the past and on
the current coordinate, the probability that the next selected pair
contains this coordinate is exactly \(2/n\).  Hence, by a sequential
coupling, the number \(H_n\) of such updates is stochastically dominated
by
\[
        \on{Bin}(b_n(T),2/n).
\]
At each such update, the retained mass is at least one half of its
previous value, so its logarithmic loss increases by at most \(\log2\).
Since \(B_{n,T}\) is the minimum logarithmic loss among the masses in
\(\mc M_{b_n(T)}\),
\[
        B_{n,T}\le(\log2)H_n .
\]
The binomial variables on the right have uniformly bounded second
moments, because \(b_n(T)\le nT/2\).  Therefore
\[
        \sup_n\mb E B_{n,T}^2<\infty .
\]
Together with \(B_{n,T}\Rightarrow B_T\), this implies
\[
        \mb E B_{n,T}\to \mb E B_T .
\]

Finally, on \(\mc C_{n,T}^c\), the domination invariant
\eqref{eq:mass-domination-invariant} holds through the end of the
block.  Applying it to a coordinate-mass pair
\((\ell,m)\in\mc M_{b_n(T)}\) with minimal logarithmic loss gives
\eqref{eq:mass-domination-output}.
\end{proof}

\section{An \(\ell^\infty\) estimate for the normalized scale vector}
\label{sec:flattening}

We use the scale-vector representation from \cref{prop:hidden-rep}.  Since
\(F_{\lambda a}=F_a\), we normalize \(A_t\) by its average coordinate.  Write
\[
        R_t:=\sum_iA_{t,i},
        \qquad
        \bar A_t:=\frac{R_t}{n},
        \qquad
        \xi_t:=A_t-\bar A_t\one,
        \qquad
        W_t:=\sum_i\xi_{t,i}^2 .
\]
Since \(A_t\in[0,\infty)^n\setminus\{0\}\), we have \(R_t>0\).
Define
\[
        u_t:=\frac{A_t}{\bar A_t}-\one
        =
        \frac{n\xi_t}{R_t}.
\]
Then \(A_t=\bar A_t(\one+u_t)\), and hence
\[
        F_{A_t}=F_{\one+u_t}.
\]

The result needed for the upper bound is the following.

\begin{proposition}
\label{prop:Linfty-full}
Let \(c>C_{\mr{BRW}}\), and let \(t_n=\ceil{c n\log n}\).  For every
fixed \(r>0\) and \(\varepsilon>0\),
\begin{equation}\label{eq:Linfty-full}
        \mb P\left(
        R_{t_n}\ge r,\ 
        \norm{u_{t_n}}_\infty>\varepsilon
        \right)
        \longrightarrow0 .
\end{equation}
\end{proposition}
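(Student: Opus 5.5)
The plan is to follow the drift heuristic of \cref{sub:overview}: control $\mb E\Phi_p(\xi_{t_n})$ for a well-chosen $p\in(2,4)$ and conclude by Markov's inequality on the event $\{R_{t_n}\ge r\}$. Since $c>C_{\mr{BRW}}=\inf_{p>1}p/(2(1-m(p)))$, I first choose $p$ with $p<2c(1-m(p))$. The infimum should be attained in $(2,4)$, so that such a $p$ exists there. This needs a numerical or analytic check on $p\mapsto p/(1-m(p))$; numerically the minimizer is near $3$. On $\{R_{t}\ge r\}$ we have $\norm{u_t}_\infty\le n\norm{\xi_t}_\infty/r$, hence
\[
\mb P(R_t\ge r,\ \norm{u_t}_\infty>\varepsilon)\le (n/(r\varepsilon))^p\,\mb E\Phi_p(\xi_t).
\]
It therefore suffices to show $\mb E\Phi_p(\xi_{t_n})=o(n^{-p})$.

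The core is the one-step drift inequality (stated later as \cref{prop:fractional-drift-full}):
\[
\mb E[\Phi_p(\xi_{t+1})\mid A_t]\le(1-2(1-m(p))/n)\Phi_p(\xi_t)+C_pn^{-p/2}W_t^{p/2}.
\]
I would prove it by writing the update on the pair $\{i,j\}$ in centered coordinates. The new common value is $BA_i+(1-B)A_j$. The mean $\bar A$ shifts by $-(\xi_i+\xi_j)/n+(2/n)(B\xi_i+(1-B)\xi_j)$, which is $O(|\xi_i|+|\xi_j|)/n$ and is felt by all coordinates. The selected coordinates become $B\xi_i+(1-B)\xi_j$ (twice) plus the shift. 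Expanding $|B x+(1-B)y|^p$ and comparing with $B^p|x|^p+(1-B)^p|y|^p$, the cross terms are bounded by $C(|x|^{p-1}|y|+|x||y|^{p-1})$. Here $p<4$ keeps this controllable by elementary inequalities for $|a+b|^p$. Averaging over the $\binom n2$ pairs, cross terms give $n^{-2}\sum_{i\ne j}|\xi_i|^{p-1}|\xi_j|\le n^{-2}\norm{\xi}_{p-1}^{p-1}\norm{\xi}_1$, which Hölder bounds by $n^{-2}\cdot n^{\,\cdot}W^{p/2}$ of the stated form. The global mean shift contributes via a Taylor expansion of $\sum_k|\xi_k+\delta|^p$. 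The first-order term vanishes in expectation up to lower order, since $\sum_k\xi_k=0$ and $\mb E\delta$ is small. The second-order term is $\lesssim n\,\mb E\delta^2\cdot\norm{\xi}_{p-2}^{p-2}$-type quantities, also absorbed into $n^{-p/2}W^{p/2}$ via Hölder. Careful bookkeeping of the powers of $n$ here is the step I expect to be fiddly but routine.

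Next I need control of $W_t$. The case $p=2$ is exact: a direct computation gives $\mb E[W_{t+1}\mid A_t]\le\rho_nW_t$ with $\rho_n=1-1/(2n)+O(n^{-2})$, matching $1-2(1-m(2))/n$ with $m(2)=3/4$. Since $W_0=1-1/n\le1$, this yields $\mb E W_t\le\rho_n^t$. To bound $\mb E W_t^{p/2}$ I would either derive the analogous drift for $W^{p/2}$ directly, or use $W_t\le \norm{\xi_t}_\infty^{\,\cdot}$-type bounds. A cleaner route is Jensen in the wrong direction avoided by the crude bound $W_t^{p/2}\le W_t\cdot \max(W_t,\cdot)^{p/2-1}$ with $W_t\le\sum_iA_{t,i}^2\le R_t^2\le1$, since the $A$-process only averages and so $A_{t,i}\le1$ and $R_t\le n$. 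This is too crude, so I will instead iterate the $p/2$-power drift. Either way the target is $\mb E W_t^{p/2}\le C\rho_n^{t}$ or similar, which is tiny: $n^{-c/2+o(1)}$ at $t\sim cn\log n$.

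Finally, iterating the drift gives
\[
\mb E\Phi_p(\xi_{t_n})\le(1-2(1-m(p))/n)^{t_n}\Phi_p(\xi_0)+C_pn^{-p/2}\sum_{s<t_n}(1-2(1-m(p))/n)^{t_n-1-s}\mb E W_s^{p/2}.
\]
The first term is $n^{-2c(1-m(p))+o(1)}=o(n^{-p})$ by the choice of $p$. The second term is the main obstacle: the sum is dominated by the slower of the two decay rates. One has $2(1-m(p))$ for the drift factor, versus the $W^{p/2}$ rate $\approx p/4$ from $(1-1/(2n))^{tp/2}$. Thus the second term is at most $n^{1-p/2}\cdot n^{-c\min(2(1-m(p)),p/4)+o(1)}$, and I must check that this is $o(n^{-p})$ for $c>C_{\mr{BRW}}$. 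Since $c>3.89$ and $p\approx3$, the requirement $cp/4>p/2+1$ holds comfortably. If the $W$ decay rate turns out weaker than hoped, the fallback is to use the better-than-trivial initial factor $n^{-p/2}$ and to decrease $p$ slightly within $(2,4)$ while keeping $p<2c(1-m(p))$.
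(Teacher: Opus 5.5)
\textbf{There is a genuine gap: the bound on $\mb E W_t^{p/2}$ is asserted, not proved, and it is the step that decides the threshold.} The skeleton of your argument matches the paper: a drift inequality for $\Phi_p(\xi_t)$, Markov's inequality on $\{R_t\ge r\}$, and iteration with the forcing term $C_pn^{-p/2}\mb E W_t^{p/2}$. What is missing is a bound on $\mb E W_t^{p/2}$ with a large enough decay exponent.

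You take this exponent to be $p/4$ ``from $(1-1/(2n))^{tp/2}$'', i.e.\ you replace $\mb E W_t^{p/2}$ by $(\mb E W_t)^{p/2}$. Since $p/2>1$, Jensen gives the opposite inequality. This substitution is not harmless; at the moment $q=2$ it is false. Indeed $W_t^2\ge\sum_i\xi_{t,i}^4$, and in the exact recursion for $\mb E\sum_i\xi_{t,i}^4$ the diagonal coefficient is $1-\frac{29}{32n}+O(n^{-2})$, with $\frac{29}{32}=2(1-m(4))$, while the cross coefficient is nonnegative. Hence $\mb E W_t^2\ge e^{-(29/32+o(1))t/n}\gg(\mb E W_t)^2\approx e^{-t/n}$ once $t\gg n$. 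Iterating a one-step drift for $W_t^{p/2}$ does not close by itself either. While $\xi_t$ is carried by a few coordinates, a single update changes $W_t$ by a factor of order one with probability of order $1/n$. So the second-order term in the expansion of $W_{t+1}^{p/2}$ is as large as the drift.

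The gap decides the result. Suppose you only have the first-moment exponent, i.e.\ your fallback $\mb E W_t^{p/2}\le C\rho_n^t$, which itself still needs justification. After Markov the forcing term contributes $n^{p}\cdot n^{1-p/2}\cdot n^{-c/2+o(1)}$, so you need $c>2+p>4$. Nothing is then proved for $c\in(C_{\mr{BRW}},4]$.

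Shrinking $p$ does not help. As $c\downarrow C_{\mr{BRW}}$, $p$ is pinned near the minimizer $p_*\approx2.4126$ of $p/(2(1-m(p)))$. (It is not near $3$: at $p=3$ the ratio equals $4$.) Near $p_*$ you need an exponent for $\mb E W_t^{p/2}$ larger than $(1+p_*/2)/C_{\mr{BRW}}\approx0.567$.

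The paper obtains this exponent from an exact $2\times2$ linear recursion for $(\mb E W_t^2,\mb E\sum_i\xi_{t,i}^4)$, whose slow eigenvalue is $1-\frac{29}{32n}+O(n^{-2})$. It combines this with the interpolation $\mb E W_t^{p/2}\le(\mb E W_t)^{2-p/2}(\mb E W_t^2)^{p/2-1}$. The result is the exponent $\sigma_p=(6+13p)/64\approx0.584$ near $p_*$. A certified numerical check then shows $(1+p/2)/\sigma_p<3.78<C_{\mr{BRW}}$ and $\sigma_p<2(1-m(p))$ on an interval containing $p_*$. Your exponent $p/4\approx0.603$ might even be the true one, but as written it is an unproved assumption at exactly the step that determines the threshold.
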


The proof uses three estimates.  First, we control the lower tail of
\(R_t\), so that \(u_t=n\xi_t/R_t\) can be estimated on the event
\(\{R_t\ge r\}\).  Second, we prove a drift estimate for
\(\Phi_p(\xi_t)=\sum_i|\xi_{t,i}|^p\), with \(2<p<4\).  Third, we bound
the accumulated \(W_t^{p/2}\)-term in this drift using a fourth-moment
estimate for \(\xi_t\).  These estimates are combined at the end of the
section, where the choice of \(p\) gives the threshold \(C_{\mr{BRW}}\).

\subsection{The total scale}

We first record the quadratic contraction of \(\xi_t\) and the lower-tail
estimate for \(R_t\).

\begin{lemma}\label{lem:R-tail-full}
Let
\begin{equation*}
        \rho_n:=1-\frac{n+2}{2n(n-1)} .
\end{equation*}
Then
\begin{equation}\label{eq:EW-full}
        \mb E W_t=\rho_n^t\left(1-\frac1n\right).
\end{equation}
The process \(R_t\) is a martingale, converges in \(L^2\) to a limit
\(R_\infty\), and satisfies
\begin{equation*}
        \mb E[(R_\infty-R_t)^2]\le2\rho_n^t .
\end{equation*}
Moreover, for every sequence \(t_n\) with \(t_n/n\to\infty\), there is a
universal constant \(C<\infty\) such that
\begin{equation}\label{eq:R-t-tail-full}
        \limsup_{n\to\infty}\mb P(R_{t_n}\le r)
        \le C\sqrt r,
        \qquad 0<r\le1 .
\end{equation}
\end{lemma}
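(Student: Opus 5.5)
The first three claims follow from one-step conditional moment computations for the update \eqref{eq:hidden-update}. The tail bound \eqref{eq:R-t-tail-full} I would get from the gamma representation behind \cref{prop:hidden-rep}: it forces \(R_t/2\) to be approximately \(\on{Gamma}(1/2,1)\).

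\textbf{Martingale and quadratic contraction.} Fix \(A_t=a\) and a selected pair \(\{i,j\}\), and write \(a^*=Ba_i+(1-B)a_j\). We have \(\mb E B=1/2\), \(\mb E B^2=3/8\), \(\mb E B(1-B)=1/8\), and \(\mb E(2B-1)^2=1/2\).
\begin{itemize}
\item The increment of \(R\) is \(2a^*-a_i-a_j=(2B-1)(a_i-a_j)\), which has conditional mean zero. So \(R_t\) is a martingale.
\item Its conditional second moment is \(\tfrac12(a_i-a_j)^2\). Averaging over pairs and using \(\sum_{i<j}(a_i-a_j)^2=nW\) gives
\[
\mb E[(R_{t+1}-R_t)^2\mid A_t]=\frac{W_t}{n-1}.
\]
\item For the sum of squares, \(2\mb E(a^*)^2-a_i^2-a_j^2=-\tfrac14(a_i-a_j)^2\). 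After averaging, \(\sum_k A_k^2\) drifts by \(-W_t/(2(n-1))\).
\item By the martingale property, \(\mb E[R_{t+1}^2/n\mid A_t]-R_t^2/n=W_t/(n(n-1))\).
\end{itemize}
Since \(W=\sum_kA_k^2-R^2/n\), combining the last two items gives
\[
\mb E[W_{t+1}\mid A_t]=\Bigl(1-\tfrac{1}{2(n-1)}-\tfrac1{n(n-1)}\Bigr)W_t=\rho_n W_t.
\]
With \(W_0=1-1/n\), this yields \eqref{eq:EW-full}.

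\textbf{\(L^2\) convergence.} By orthogonality of martingale increments,
\[
\mb E(R_\infty-R_t)^2=\sum_{s\ge t}\frac{\mb E W_s}{n-1}=\frac{\rho_n^t(1-1/n)}{(n-1)(1-\rho_n)}=\frac{2n}{n+2}\Bigl(1-\frac1n\Bigr)\rho_n^t\le2\rho_n^t.
\]
In particular \(R_t\) is bounded in \(L^2\), so it converges in \(L^2\) to \(R_\infty\).

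\textbf{Lower tail.} The key observation is an exact identity extracted from the proof of \cref{prop:hidden-rep}. Iterating that construction produces a coupling in which
\[
Z^{(t)}\stackrel d=(A_{t,k}G_k)_{k\le n},
\]
with \((G_k)\) i.i.d. \(\on{Gamma}(1/2,1)\) independent of \(A_t\). In that construction \(\sum_kZ_k\) is preserved by every update, and at time \(0\) it equals \(G_1\). Hence
\[
\sum_kA_{t,k}G_k\sim\on{Gamma}(1/2,1)\quad\text{exactly, for every }t.
\]
Now decompose, using \(A_t=\bar A_t\one+\xi_t\),
\[
\sum_kA_{t,k}G_k=\frac{R_t}{n}\sum_kG_k+\sum_k\xi_{t,k}G_k.
\]
Because \(\sum_k\xi_{t,k}=0\) and \(G\) is independent of \(A_t\), the second term has second moment \(\operatorname{Var}(G_1)\,\mb E W_t=\tfrac12\mb E W_t\). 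When \(t_n/n\to\infty\) this is at most \(\rho_n^{t_n}\le e^{-t_n/(2n)}\to0\).

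For the first term, \(\tfrac1n\sum_kG_k\to\tfrac12\) in probability, and \(R_{t_n}\) is tight because \(\mb E R_{t_n}=1\). So \(R_{t_n}/2-\sum_kA_{t_n,k}G_k\to0\) in probability, and therefore \(R_{t_n}/2\Rightarrow\on{Gamma}(1/2,1)\). By the portmanteau theorem,
\[
\limsup_n\mb P(R_{t_n}\le r)\le\mb P(G\le r/2)\le C\sqrt r,
\]
using that the \(\on{Gamma}(1/2,1)\) density is at most \(x^{-1/2}/\sqrt\pi\).

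\textbf{Main obstacle.} The step needing the most care is justifying the exact identity \(\sum_kA_{t,k}G_k\sim\on{Gamma}(1/2,1)\), i.e.\ that the proof of \cref{prop:hidden-rep} gives a pathwise coupling with fresh i.i.d.\ gammas independent of \(A_t\) at every time, not just equality of normalized laws. That induction shows \((Z_i',Z_j')=a^*(UR,(1-U)R)\), where \(UR\) and \((1-U)R\) are i.i.d. \(\on{Gamma}(1/2,1)\) and independent of \(B\), hence of \(a'\); the remaining coordinates keep their gamma factors. So both the joint structure and the conserved total are available, and the identity holds.
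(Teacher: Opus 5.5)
Your proposal is correct. The martingale property, the contraction \(\mb E[W_{t+1}\mid A_t]=\rho_nW_t\), and the \(L^2\) bound are the same one-step computations as in the paper. You route the \(W\) recursion through \(\sum_kA_{t,k}^2-R_t^2/n\) and evaluate the tail sum exactly as \(\frac{2(n-1)}{n+2}\rho_n^t\), but that is only bookkeeping.

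The lower-tail bound is where you genuinely diverge from the paper. The paper's route is a duality argument:
\begin{itemize}
\item It writes \(R_t/n=\ip{e^{(1)}}{H_1^{\mathsf T}\cdots H_t^{\mathsf T}\bar e}\) and recognizes the transposed product as the energy chain started from \(\bar e\).
\item It invokes ergodicity of that chain, together with \(R_t\to R_\infty\) in \(L^2\), to get the exact law \(R_\infty/n\sim\on{Beta}(1/2,(n-1)/2)\).
\item It transfers the uniform beta tail bound to \(R_t\) via \(\mb P(R_t\le r)\le\mb P(R_\infty\le 2r)+2\rho_n^t/r^2\).
\end{itemize}
You instead read off, from the unnormalized form of the coupling in \cref{prop:hidden-rep}, the conserved law \(\sum_kA_{t,k}G_k\sim\on{Gamma}(1/2,1)\). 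Combining it with \(\mb E W_{t_n}\to0\), the law of large numbers for \(\frac1n\sum_kG_k\), and tightness of \(R_{t_n}\) gives \(R_{t_n}/2\Rightarrow\on{Gamma}(1/2,1)\) directly.

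Your justification of the conserved law is sound, and you only need it one step at a time. Given \((A_t,G)\) with \(G\) independent of \(A_t\) and the selected pair, setting \(B=G_i/(G_i+G_j)\) and building the new gammas as you describe produces \(A_{t+1}\) with the correct law and a fresh i.i.d. gamma vector independent of it, with the same weighted sum. So no control of the full history is needed. Equivalently, \(\mb E\prod_k(1+\theta A_{t,k})^{-1/2}=(1+\theta)^{-1/2}\) for all \(t\), by the arcsine identity \(\mb E[(xB+y(1-B))^{-1}]=(xy)^{-1/2}\).

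What your route buys:
\begin{itemize}
\item It avoids the transpose/duality argument and the appeal to ergodicity of the energy chain, which the paper imports from convergence of Kac's walk.
\item It does not use \(R_\infty\) or the \(L^2\) convergence for the tail estimate.
\item It identifies the limit law of \(R_{t_n}\) itself. This is consistent with the paper, since \(n\,\on{Beta}(1/2,(n-1)/2)\Rightarrow 2\,\on{Gamma}(1/2,1)\).
\end{itemize}
The paper's route, in exchange, gives an exact finite-\(n\) law for \(R_\infty\) and the non-asymptotic bound \(C\sqrt r+2\rho_n^t/r^2\) for every \(n\) and \(t\). Yours is inherently asymptotic because it passes through the law of large numbers.
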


\begin{proof}
Suppose first that the selected pair is \(\{i,j\}\).  Then
\[
        A_{t+1,i}=A_{t+1,j}
        =
        BA_{t,i}+(1-B)A_{t,j},
\]
while all other coordinates are unchanged.  Hence
\[
\begin{aligned}
        R_{t+1}-R_t
        &=
        2\bigl(BA_{t,i}+(1-B)A_{t,j}\bigr)
        -A_{t,i}-A_{t,j}                                      \\
        &=
        (2B-1)(A_{t,i}-A_{t,j}).
\end{aligned}
\]
Since \(\mb E(2B-1)=0\), the process \(R_t\) is a martingale.

A direct calculation, conditioned on \(A_t\) and on the selected pair,
gives
\[
        \mb E[W_{t+1}\mid A_t,\{i,j\}]
        =
        W_t-\left(\frac14+\frac1{2n}\right)(A_{t,i}-A_{t,j})^2 .
\]
Averaging over the uniformly chosen pair and using
\[
        \sum_{i<j}(A_{t,i}-A_{t,j})^2=nW_t
\]
proves
\[
        \mb E W_{t+1}
        =
        \left(1-\frac{n+2}{2n(n-1)}\right)\mb E W_t .
\]
Since \(W_0=1-1/n\), this gives \eqref{eq:EW-full}.

Similarly,
\[
\begin{aligned}
        \mb E[(R_{t+1}-R_t)^2\mid A_t,\{i,j\}]
        &=
        \mb E(2B-1)^2\,(A_{t,i}-A_{t,j})^2                  \\
        &=
        \frac12(A_{t,i}-A_{t,j})^2 .
\end{aligned}
\]
Averaging over pairs gives
\[
        \mb E[(R_{t+1}-R_t)^2\mid A_t]
        =
        \frac{W_t}{n-1}.
\]
Therefore
\[
        \sum_{t=0}^{\infty}
        \mb E[(R_{t+1}-R_t)^2]
        =
        \frac1{n-1}\sum_{t=0}^{\infty}\mb E W_t
        <\infty .
\]
Thus \(R_t\) converges in \(L^2\) to a limit \(R_\infty\).  Orthogonality
of martingale increments and \eqref{eq:EW-full} give
\[
        \mb E[(R_\infty-R_t)^2]
        =
        \sum_{s=t}^{\infty}
        \mb E[(R_{s+1}-R_s)^2]
        =
        \frac1{n-1}\sum_{s=t}^{\infty}\mb E W_s
        \le2\rho_n^t .
\]

It remains to identify the lower tail of \(R_\infty\).  Write one scale
update as \(A'=HA\), where \(H\) is the random matrix determined by the
selected pair and beta variable in \eqref{eq:hidden-update}.  Then
\(H^{\mathsf T}\) acts on the simplex by
\[
        z_i'=B(z_i+z_j),
        \qquad
        z_j'=(1-B)(z_i+z_j),
\]
with all other coordinates unchanged.  Thus \(H^{\mathsf T}\) performs
one step of the energy chain.  Let \(\bar e=n^{-1}\one\).  Since
\(A_t=H_t\cdots H_1e^{(1)}\),
\[
        \frac{R_t}{n}
        =
        \ip{A_t}{\bar e}
        =
        \ip{e^{(1)}}{H_1^{\mathsf T}\cdots H_t^{\mathsf T}\bar e}.
\]
The matrices \(H_1,\ldots,H_t\) are iid, so
\(H_1^{\mathsf T}\cdots H_t^{\mathsf T}\) has the same law as
\(H_t^{\mathsf T}\cdots H_1^{\mathsf T}\), the product defining the
energy chain.  Hence
\[
        \frac{R_t}{n}\stackrel d=Y_{t,1},
\]
where \(Y_t\) is the energy chain started from \(\bar e\).
As noted in \cref{sub:squared-chain}, for fixed \(n\) this chain is
ergodic with stationary law
\[
        \pi_n=\on{Dir}_n(1/2,\ldots,1/2).
\]
Since \(R_t\to R_\infty\) in \(L^2\), if
\(Y^{\mr{stat}}\sim\pi_n\), then
\[
        \frac{R_\infty}{n}\stackrel d=Y^{\mr{stat}}_1 .
\]
Thus
\[
        R_\infty/n\sim\on{Beta}(1/2,(n-1)/2).
\]
The beta density gives, uniformly in \(n\),
\begin{align*}
        \mb P(R_\infty\le r) &
        =
        \paren{ \int_0^{1}u^{-1/2}(1-u)^{(n-3)/2}\,du }^{-1}
        \int_0^{r/n}u^{-1/2}(1-u)^{(n-3)/2}\,du \\
        & \le C\sqrt r,
        \qquad 0<r\le1 .
\end{align*}
Finally,
\[
        \mb P(R_t\le r)
        \le
        \mb P(R_\infty\le2r)+\mb P(|R_t-R_\infty|>r)
        \le
        C\sqrt r+\frac{2\rho_n^t}{r^2}.
\]
If \(t_n/n\to\infty\), then \(\rho_n^{t_n}\to0\), proving
\eqref{eq:R-t-tail-full}.
\end{proof}

\subsection{A drift estimate for \(\Phi_p\)}

Recall from \eqref{eq:m-lambda-main} that, for positive arguments and
\(U\sim\on{Beta}(1/2,1/2)\),
\[
        m(p)=
        \mb E[U^p+(1-U)^p]
        =
        \frac{2\Gamma(p+1/2)}
             {\sqrt\pi\,\Gamma(p+1)} .
\]
For \(2<p<4\), define
\[
        \Phi_p(x):=\sum_{i=1}^n |x_i|^p .
\]
The next proposition gives the drift estimate for \(\Phi_p(\xi_t)\).  Its
error term will be controlled in \cref{lem:W-moments-full}.

\begin{proposition}\label{prop:fractional-drift-full}
Fix \(2<p<4\).  There are constants \(C_p<\infty\) such that, for sufficiently large \(n\ge n_0(p)\) and every \(t\ge0\),
\[
        \mb E[\Phi_p(\xi_{t+1})\mid A_t]
        \le
        \left(1-\frac{2(1-m(p))}{n}\right)
        \Phi_p(\xi_t)
        +
        C_p n^{-p/2}W_t^{p/2}.
\]
\end{proposition}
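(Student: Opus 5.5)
We condition on \(A_t\) and on the selected pair \(\{i,j\}\), then average over the \(\binom n2\) pairs at the end. Write \(x=\xi_t\), \(\delta:=A_{t,i}-A_{t,j}=x_i-x_j\), and \(s:=(2B-1)\delta\). By the proof of \cref{lem:R-tail-full}, \(R_{t+1}-R_t=s\), so the average shifts by \(s/n\). Every unselected coordinate becomes
\[
        \xi_{t+1,k}=x_k-s/n .
\]
Both selected coordinates become
\[
        \xi_{t+1,i}=\xi_{t+1,j}=Bx_i+(1-B)x_j-s/n .
\]
The first step is to separate the main term from the two error sources: the interaction inside the pair, and the global shift \(s/n\).

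\textbf{Main term.} Ignoring the shift, the selected pair contributes
\[
        2\abs{Bx_i+(1-B)x_j}^p .
\]
I would not compare this with \(m(p)\) times \(\abs{x_i}^p+\abs{x_j}^p\) directly. Instead, use the elementary inequality
\[
        2\abs{Bx_i+(1-B)x_j}^p
        \le
        (1+\eta)\bigl(B^p+(1-B)^p\bigr)\ldots
\]
This is not quite right either, since \(2(Ba)^p\) is not \(B^pa^p\).

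The correct route is to compute \(\mb E_B\bigl[2\abs{Bx_i+(1-B)x_j}^p\bigr]\) and compare it with \(\abs{x_i}^p+\abs{x_j}^p\). Take \(x_j=0\): then \(\mb E\bigl[2B^p\bigr]\abs{x_i}^p=m(p)\abs{x_i}^p\), since \(\mb E B^p=\mb E(1-B)^p=m(p)/2\). So the pure term gives the factor \(m(p)\). Cross terms are handled by the pointwise bound
\[
        \abs{a+b}^p\le \abs{a}^p+\abs{b}^p+C_p\bigl(\abs a^{p-1}\abs b+\abs a\abs b^{p-1}\bigr),
\]
valid for \(2<p<4\) and obtained by Taylor expansion. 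This gives
\[
        \mb E_B\bigl[2\abs{Bx_i+(1-B)x_j}^p\bigr]
        \le
        m(p)\bigl(\abs{x_i}^p+\abs{x_j}^p\bigr)
        +C_p\bigl(\abs{x_i}^{p-1}\abs{x_j}+\abs{x_i}\abs{x_j}^{p-1}\bigr).
\]
Since the pair is uniform, averaging over pairs turns the pure terms into exactly the claimed coefficient \(1-2(1-m(p))/n\). The cross terms become
\[
        \frac{C_p}{n^2}\sum_{i\neq j}\abs{x_i}^{p-1}\abs{x_j}
        \le
        \frac{C_p}{n^2}\,\Phi_{p-1}(x)\,\norm{x}_1 .
\]
Because \(\norm{x}_\infty\le W^{1/2}\), we have \(\Phi_{p-1}(x)\le W^{(p-3)/2}\Phi_2\) when \(p\ge3\), and Hölder gives the analogous bound otherwise. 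Together with \(\norm{x}_1\le\sqrt n\,W^{1/2}\), this yields \(\Phi_{p-1}\norm{x}_1\le \sqrt n\, W^{p/2}\). So the cross terms are at most \(C_p n^{-3/2}W^{p/2}\), which is \(\le C_pn^{-p/2}W^{p/2}\) only when \(p\le3\). For \(p>3\) this crude bound is too weak.

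\textbf{Refined cross-term bound.} To fix this, I would use the centering \(\sum_j x_j=0\). Expand \(\abs{a+b}^p\) to second order:
\[
        \abs{a+b}^p=\abs a^p+p\abs a^{p-2}a\,b+O_p\bigl(\abs a^{p-2}b^2+\abs b^p\bigr).
\]
The linear term averaged over \(j\) involves \(\sum_j x_j=0\), so it vanishes up to the diagonal term. What remains is of size \(n^{-2}\sum_{i,j}\abs{x_i}^{p-2}x_j^2\le n^{-2}\Phi_{p-2}W\). By Hölder,
\[
        \Phi_{p-2}\le n^{1-(p-2)/2}W^{(p-2)/2},
\]
so this term is bounded by \(n^{-p/2}W^{p/2}\), as required. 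The symmetric \(\abs{x_j}^p\)-type terms have already been absorbed into the \(m(p)\) part.

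\textbf{Shift term.} The global shift \(s/n\) is handled by the same expansion applied to all \(n\) coordinates. The linear term is \(-\frac{p}{n}\sum_k\abs{x_k}^{p-2}x_k\cdot s\), and it has zero mean because \(\mb E(2B-1)=0\) and the conditional law of \(B\) is symmetric. A careful check is needed here: the selected coordinates also involve \(B\), so the linear correlation between \(s\) and the pair's new values must be expanded jointly. The second-order terms are \(\frac{s^2}{n^2}\Phi_{p-2}\) plus \(n\abs{s/n}^p\). Averaging over pairs gives \(\mb E s^2\asymp W/n\), so these are at most
\[
        n^{-3}W\Phi_{p-2}\le n^{-p/2-1}W^{p/2},
\]
and the \(n\abs{s/n}^p\) contribution is smaller still, since \(\abs s^p\le\abs\delta^p\) and \(\sum_{i<j}\abs{x_i-x_j}^p\le Cn\Phi_p\).

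\textbf{Main obstacle.} The hard part is the bookkeeping that every error term is bounded by \(n^{-p/2}W^{p/2}\) \emph{without} producing a multiple of \(\Phi_p/n\). Any such multiple would spoil the sharp coefficient \(1-2(1-m(p))/n\). The centering \(\sum_k x_k=0\) is essential for killing the first-order cross terms. Terms like \(\abs{x_j}^p\cdot O(1/n)\) must be shown to carry an extra factor of \(1/n\); any that do not would have to be absorbed by slightly reducing the constant, which the statement does not permit. If necessary, I would use \(\Phi_p\le W^{p/2}\) together with a case split on whether \(\Phi_p\) is comparable to \(n^{1-p/2}W^{p/2}\).
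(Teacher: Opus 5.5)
\textbf{There is a genuine gap for $3<p<4$, in your refined cross-term step.} Your setup and the recentering estimates are fine. For $2<p\le 3$ your crude cross-term bound works with no cancellation at all. One correction there: for $p<3$ the intermediate claim $\Phi_{p-1}(x)\norm{x}_1\le\sqrt n\,W^{p/2}$ is false for flat vectors. H\"older gives $n^{2-p/2}W^{p/2}$ instead, which after the $n^{-2}$ prefactor still gives $n^{-p/2}W^{p/2}$.

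For $3<p<4$ the problem is this. Expanding $\abs{Bx_i+(1-B)x_j}^p$ around $Bx_i$ leaves a remainder $C\abs{(1-B)x_j}^p$. Your claim that it is ``absorbed into the $m(p)$ part'' needs its coefficient to be at most $1$. For $p>3$ that is impossible, whatever constant you put on $\abs{a}^{p-2}b^2$. Take $x_i=\epsilon>0$ and $x_j=1$. The true value is $\mb E\abs{B\epsilon+1-B}^p=\frac{m(p)}2+\ell_p\epsilon+O(\epsilon^2)$, with $\ell_p=p\,\mb E[B^{p-1}(1-B)]$. Your expansion with $\abs{b}^p$-coefficient $1$ gives only $\frac{m(p)}2+O(\epsilon^{p-2})$, and $\epsilon^{p-2}\ll\epsilon$. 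Expanding around the smaller argument files the genuine first-order term $\ell_p\varphi_p(x_j)x_i$ as a remainder. Any excess multiple of $\abs{x_j}^p$ per pair becomes an error of order $\Phi_p/n$ after averaging over pairs. That is the same order as the drift you are proving. Your fallback case split cannot absorb it: for a spiked vector such as $\xi_0=e^{(1)}-n^{-1}\one$ one has $\Phi_p\asymp W^{p/2}$, so $\Phi_p/n\gg n^{-p/2}W^{p/2}$.

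\textbf{The missing ingredient is a symmetric two-variable inequality that keeps both linear terms} (this is \cref{lem:two-variable}):
\[
\mb E\abs{Ba+(1-B)b}^p\le\frac{m(p)}2\bigl(\abs{a}^p+\abs{b}^p\bigr)+\ell_p\{\varphi_p(a)b+\varphi_p(b)a\}+K_p\abs{ab}^{p/2}.
\]
By homogeneity and compactness only neighbourhoods of the axes matter. There you expand around the \emph{larger} argument. For $\abs{b}\le\abs{a}$, the remainders $\abs{a}^{p-2}b^2$ and $\abs{b}^p$, and the added ``wrong'' linear term of size $\abs{a}\abs{b}^{p-1}$, are all at most $\abs{ab}^{p/2}$. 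This uses exactly $2<p<4$. Both linear terms then collapse by centering to $-\Phi_p$, which has a favourable sign. Finally $\sum_{i<j}\abs{x_ix_j}^{p/2}\le\frac12\Phi_{p/2}(x)^2\le\frac12n^{2-p/2}W^{p/2}$, which gives the required $n^{-p/2}W^{p/2}$ after the $n^{-2}$ prefactor.

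\textbf{The correlation you flagged in the shift's linear term is harmless.} One option is the paper's argument: $\mb E[(2B-1)\varphi_p(Bx_i+(1-B)x_j)]$ has the sign of $x_i-x_j$, by symmetrizing $B\leftrightarrow1-B$ and using monotonicity of $\varphi_p$. That makes the term nonpositive. Alternatively, bound it crudely: after averaging it is $O(n^{-2}\Phi_p+n^{-3}\Phi_{p-1}\norm{x}_1)$, and $n^{-2}\Phi_p\le n^{-2}W^{p/2}\le n^{-p/2}W^{p/2}$ since $p<4$.
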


We first isolate the two-variable estimate used in the proof.  The
restriction \(p<4\) is used here and again in the interpolation step in
\cref{lem:W-moments-full}.

\begin{lemma}\label{lem:two-variable}
Fix \(2<p<4\), let \(B\sim\on{Beta}(1/2,1/2)\), and set
\[
        \varphi_p(x):=|x|^{p-2}x,
        \qquad
        \ell_p:=p\,\mb E[B^{p-1}(1-B)] .
\]
There exists \(K_p<\infty\) such that, for all \(a,b\in\mb R\),
\begin{equation*}
        \mb E|Ba+(1-B)b|^p
        \le
        \frac{m(p)}2(|a|^p+|b|^p)
        +
        \ell_p\{\varphi_p(a)b+\varphi_p(b)a\}
        +
        K_p|ab|^{p/2}.
\end{equation*}
\end{lemma}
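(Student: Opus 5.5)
The inequality is homogeneous of degree \(p\) in \((a,b)\), and it is symmetric under \(a\leftrightarrow b\). The symmetry holds because \(B\stackrel d=1-B\) and the right-hand side is symmetric in \(a,b\). So I would reduce to a one-variable inequality on a compact interval. If \(a=b=0\) there is nothing to prove. Otherwise I assume \(|b|\le|a|\) and divide by \(|a|^p\). Since replacing \((a,b)\) by \((-a,-b)\) leaves every term unchanged, this reduces to \(a=1\), \(b=s\in[-1,1]\). The claim becomes
\[
        f(s):=\mb E|B+(1-B)s|^p-\frac{m(p)}2(1+|s|^p)-\ell_p\bigl(s+\varphi_p(s)\bigr)\le K_p|s|^{p/2},
        \qquad |s|\le1 .
\]

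The key step is a second-order Taylor expansion of \(g(x)=|x|^p\) around \(x=B\) with increment \((1-B)s\). Since \(p>2\), \(g\) is \(C^2\) with \(g'(x)=p\varphi_p(x)\) and \(g''(x)=p(p-1)|x|^{p-2}\). Hence
\[
        |B+(1-B)s|^p=B^p+pB^{p-1}(1-B)s+\mc R,
        \qquad
        |\mc R|\le \tfrac{p(p-1)}2\sup_{|x|\le 2}|x|^{p-2}\,s^2\le C_ps^2 ,
\]
using \(|B|,|(1-B)s|\le1\). Taking expectations, the zeroth-order term is \(\mb E B^p=m(p)/2\), which cancels the constant \(m(p)/2\) in \(f\). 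The first-order term is exactly \(\ell_p s\), which cancels \(\ell_p s\) in \(f\). This is precisely why \(\ell_p\) is defined as it is.

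What remains is
\[
        f(s)\le C_ps^2+\frac{m(p)}2|s|^p+\ell_p|s|^{p-1}
        \le C_p'\bigl(s^2+|s|^{p-1}\bigr)
\]
for \(|s|\le 1\). (Discarding \(-\frac{m(p)}2|s|^p\) is even allowed.) Finally, for \(|s|\le1\) we have \(s^2\le|s|^{p/2}\), because \(2\ge p/2\), i.e. \(p\le4\). We also have \(|s|^{p-1}\le|s|^{p/2}\), because \(p-1\ge p/2\), i.e. \(p\ge2\). This gives \(f(s)\le K_p|s|^{p/2}\). Undoing the normalization, \(|a|^p|s|^{p/2}=|a|^{p/2}|b|^{p/2}=|ab|^{p/2}\), which is the stated bound.

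The only genuinely delicate point is the matching of exponents in this last step. The quadratic Taylor remainder \(s^2\) must be dominated by \(|s|^{p/2}\) near \(s=0\), and this is exactly where \(p<4\) enters; the cross term \(\varphi_p(b)a\) of order \(|s|^{p-1}\) needs \(p>2\). One should also check that the Taylor remainder bound is uniform in \(B\in(0,1)\) even though \(B\) can be near \(0\). This is fine because \(g''\) is bounded on bounded sets when \(p\ge2\), so no singular moments of \(B\) appear.
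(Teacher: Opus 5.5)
Your proof is correct and follows essentially the same route as the paper. You reduce by homogeneity and the $a\leftrightarrow b$ symmetry to $|b|\le|a|$, Taylor-expand $|\cdot|^p$ about $Ba$ so that the zeroth- and first-order terms cancel $\frac{m(p)}{2}|a|^p$ and $\ell_p\varphi_p(a)b$, and then dominate the leftover terms of orders $|b|^2$, $|b|^{p-1}$ (and $|b|^p$) by $|b|^{p/2}$ using $2<p<4$. The only difference is cosmetic: the paper normalizes to the unit circle and handles the region away from the axes by compactness, whereas your normalization $a=1$ lets the Taylor bound cover all of $|s|\le 1$ directly, so no compactness step is needed.
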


\begin{proof}
Define
\[
        \Delta_p(a,b)
        :=
        \mb E|Ba+(1-B)b|^p
        -
        \frac{m(p)}2(|a|^p+|b|^p) -
        \ell_p\{\varphi_p(a)b+\varphi_p(b)a\}.
\]
The function \(\Delta_p\) is homogeneous of degree \(p\).  It is
therefore enough to prove
\[
        \Delta_p(a,b)\le K_p|ab|^{p/2}
\]
on the unit circle \(|a|^2+|b|^2=1\).  Away from the two coordinate
axes, this follows by compactness.

It remains to treat the part of the unit circle where one coordinate is
small.  By symmetry, assume \(|b|\le |a|\), so that
\(|a|\ge2^{-1/2}\).  Taylor's inequality,
applied to \(Ba+(1-B)b\) around \(Ba\), gives
\[
\begin{aligned}
        |Ba+(1-B)b|^p
        &\le
        |Ba|^p
        +
        p|Ba|^{p-2}(Ba)(1-B)b                              \\
        &\quad
        +
        C_p\left(
        |Ba|^{p-2}(1-B)^2b^2+|(1-B)b|^p
        \right).
\end{aligned}
\]
Averaging over \(B\) yields
\[
        \mb E|Ba+(1-B)b|^p
        \le
        \frac{m(p)}2|a|^p
        +
        \ell_p\varphi_p(a)b
        +
        C_p\bigl(|a|^{p-2}b^2+|b|^p\bigr),
\]
where we used
\[
        \mb E[B^{p-2}(1-B)^2]<\infty,
\]
which holds because \(p>2\).  Comparing this bound with the definition
of \(\Delta_p(a,b)\), we obtain
\[
\begin{aligned}
        \Delta_p(a,b)
        &\le
        C_p\bigl(|a|^{p-2}b^2+|b|^p\bigr)
        -
        \frac{m(p)}2|b|^p
        -
        \ell_p\varphi_p(b)a                                      \\
        &\le
        C_p\bigl(|a|^{p-2}b^2+|b|^p+|a||b|^{p-1}\bigr).
\end{aligned}
\]
The last inequality uses
\[
        \ell_p|\varphi_p(b)a|
        =
        \ell_p|a||b|^{p-1}.
\]

Since \(|a|\asymp1\), \(|b|\le1\), and \(2<p<4\), each of the three
terms satisfies
\[
        |a|^{p-2}b^2,\quad |b|^p,\quad |a||b|^{p-1}
        \le
        C_p|b|^{p/2}.
\]
On the present part of the unit circle, \(|a|\asymp1\), so
\[
        |b|^{p/2}\asymp |ab|^{p/2}.
\]
Hence
\[
        \Delta_p(a,b)\le K_p|ab|^{p/2}
\]
near the axis \(b=0\).  The neighborhood of the other axis is identical.
\end{proof}

\begin{proof}[Proof of \cref{prop:fractional-drift-full}]
Work conditioned on \(A_t\), and write
\[
        \xi_i:=\xi_{t,i},
        \qquad
        W_t=\sum_i\xi_i^2 .
\]
Suppose first that the selected pair is \(\{i,j\}\).  Since
\[
        A_{t,k}=\bar A_t+\xi_k ,
\]
the scale update gives
\[
        A_{t+1,i}=A_{t+1,j}
        =
        B(\bar A_t+\xi_i)+(1-B)(\bar A_t+\xi_j)
        =
        \bar A_t+B\xi_i+(1-B)\xi_j,
\]
and
\[
        A_{t+1,k}=\bar A_t+\xi_k,
        \qquad k\notin\{i,j\}.
\]
Thus
\[
        A_{t+1}=\bar A_t\one+z,
\]
where
\[
        z_i=z_j=B\xi_i+(1-B)\xi_j,
        \qquad
        z_k=\xi_k\quad(k\notin\{i,j\}).
\]
Since \(\sum_k\xi_k=0\),
\[
        \frac1n\sum_k z_k
        =
        \frac{(2B-1)(\xi_i-\xi_j)}{n}.
\]
Therefore
\[
        \bar A_{t+1}
        =
        \bar A_t+h,
        \qquad
        h:=\frac{(2B-1)(\xi_i-\xi_j)}{n},
\]
and hence
\[
        \xi_{t+1}
        =
        A_{t+1}-\bar A_{t+1}\one
        =
        z-h\one .
\]
We first estimate \(\Phi_p(z)\), and then account for the centering by
\(h\one\).

For the fixed selected pair \(\{i,j\}\),
\[
        \Phi_p(z)
        =
        \Phi_p(\xi_t)
        -|\xi_i|^p-|\xi_j|^p
        +
        2|B\xi_i+(1-B)\xi_j|^p .
\]
Averaging over the uniformly selected pair gives
\[
\begin{aligned}
        \mb E[\Phi_p(z)\mid A_t]
        &=
        \left(1-\frac2n\right)\Phi_p(\xi_t)
        +
        \frac{4}{n(n-1)}
        \sum_{i<j}
        \mb E|B\xi_i+(1-B)\xi_j|^p .
\end{aligned}
\]
We now apply \cref{lem:two-variable} to each term in the last sum.  The
terms involving \(m(p)\) contribute
\[
        \frac{4}{n(n-1)}
        \sum_{i<j}
        \frac{m(p)}2\bigl(|\xi_i|^p+|\xi_j|^p\bigr)
        =
        \frac{2m(p)}n\Phi_p(\xi_t).
\]
The terms involving \(\ell_p\) give
\[
        \frac{4\ell_p}{n(n-1)}
        \sum_{i<j}
        \{\varphi_p(\xi_i)\xi_j+\varphi_p(\xi_j)\xi_i\}.
\]
Since \(\sum_i\xi_i=0\),
\[
        \sum_{i<j}
        \{\varphi_p(\xi_i)\xi_j+\varphi_p(\xi_j)\xi_i\}
        =
        \sum_{i\ne j}\varphi_p(\xi_i)\xi_j                 
        =
        \sum_i\varphi_p(\xi_i)\sum_{j\ne i}\xi_j
        =
        -\Phi_p(\xi_t).
\]
Thus these terms contribute
\[
        -\frac{4\ell_p}{n(n-1)}\Phi_p(\xi_t).
\]
The remaining term in \cref{lem:two-variable} is
\[
        \frac{4K_p}{n(n-1)}
        \sum_{i<j}|\xi_i\xi_j|^{p/2}.
\]
Combining these estimates,
\[
\begin{aligned}
        \mb E[\Phi_p(z)\mid A_t]
        &\le
        \left(
        1-\frac2n+\frac{2m(p)}n
        -\frac{4\ell_p}{n(n-1)}
        \right)\Phi_p(\xi_t)                                  \\
        &\quad
        +
        \frac{4K_p}{n(n-1)}
        \sum_{i<j}|\xi_i\xi_j|^{p/2}.
\end{aligned}
\]
Using
\[
        \sum_{i<j}|\xi_i\xi_j|^{p/2}
        \le
        \frac12\left(\sum_i|\xi_i|^{p/2}\right)^2
        \le
        \frac12 n^{2-p/2}W_t^{p/2},
\]
we obtain
\begin{equation}\label{eq:raw-drift-full}
        \mb E[\Phi_p(z)\mid A_t]
        \le
        \left(1-\frac{2(1-m(p))}{n}-\frac{4\ell_p}{n(n-1)}\right)\Phi_p(\xi_t)
        +
        C_p n^{-p/2}W_t^{p/2}.
\end{equation}

It remains to compare \(\Phi_p(z-h\one)\) with \(\Phi_p(z)\).  Taylor's
inequality gives
\begin{equation}\label{eq:recentering-taylor-full}
        \Phi_p(z-h\one)
        \le
        \Phi_p(z)
        -
        ph\sum_k\varphi_p(z_k)
        +
        C_p\left(
        h^2\sum_k|z_k|^{p-2}+n|h|^p
        \right).
\end{equation}

We first show that the linear term in \eqref{eq:recentering-taylor-full}
has nonpositive expectation after conditioning on \(A_t\) and on the
selected pair.  The coordinates outside \(\{i,j\}\) contribute zero to
\[
        \mb E\left[
        h\sum_k\varphi_p(z_k)
        \,\middle|\, A_t,\{i,j\}
        \right],
\]
because they do not depend on \(B\), while \(\mb E h=0\).  The two
selected coordinates contribute
\[
        \frac{2(\xi_i-\xi_j)}{n}
        \mb E\left[
        (2B-1)\varphi_p(B\xi_i+(1-B)\xi_j)
        \right].
\]
This quantity is nonnegative.  Indeed, using the symmetry
\(B\stackrel d=1-B\), the expectation in the last display equals
\[
        \frac12
        \mb E\left[
        (2B-1)
        \left\{
        \varphi_p(B\xi_i+(1-B)\xi_j)
        -
        \varphi_p((1-B)\xi_i+B\xi_j)
        \right\}
        \right].
\]
Since \(\varphi_p\) is increasing, the difference inside braces has the
same sign as
\[
        (2B-1)(\xi_i-\xi_j).
\]
It follows that the last displayed expectation has the same sign as
\(\xi_i-\xi_j\).  After multiplication by the prefactor
\((\xi_i-\xi_j)\), the selected-coordinate contribution is nonnegative.
Therefore
\[
        \mb E\left[
        h\sum_k\varphi_p(z_k)
        \,\middle|\, A_t,\{i,j\}
        \right]\ge0,
\]
and hence
\[
        \mb E\left[
        -ph\sum_k\varphi_p(z_k)
        \,\middle|\, A_t,\{i,j\}
        \right]\le0 .
\]

It remains to bound the two remainder terms in
\eqref{eq:recentering-taylor-full}.  Since \(p-2\in(0,2)\), H\"older's inequality gives
\[
        \sum_k|z_k|^{p-2}
        \le
        2\sum_k|\xi_k|^{p-2}
        \le
        2 n^{(4-p)/2}W_t^{(p-2)/2}.
\]
Also,
\[
        \mb E_{\{i,j\}}(\xi_i-\xi_j)^2
        =
        \frac{2}{n-1}W_t .
\]
Using \(|2B-1|\le1\), we get
\[
\begin{aligned}
        \mb E\left[
        h^2\sum_k|z_k|^{p-2}
        \,\middle|\, A_t
        \right]
        &\le
        2 n^{-2}
        n^{(4-p)/2}W_t^{(p-2)/2}
        \mb E_{\{i,j\}}(\xi_i-\xi_j)^2                         \\
        &\le
        2 n^{-(p+2)/2}W_t^{p/2}
        \le
        2 n^{-p/2}W_t^{p/2}.
\end{aligned}
\]
Similarly,
\[
        \mb E_{\{i,j\}}|\xi_i-\xi_j|^p
        \le
        \frac{D_p}{n}\Phi_p(\xi_t),
\]
and therefore
\[
\begin{aligned}
        \mb E[n|h|^p\mid A_t]
        &\le
        D_p n^{1-p}
        \mb E_{\{i,j\}}|\xi_i-\xi_j|^p  \\
        &\le
        D_p n^{-p}\Phi_p(\xi_t).
\end{aligned}
\]

Combining these estimates with \eqref{eq:raw-drift-full} gives
\[
        \mb E[\Phi_p(\xi_{t+1})\mid A_t]
        \le
        \left(
        1-\frac{2(1-m(p))}{n}-\frac{4\ell_p}{n(n-1)}+\frac{D_p}{n^p}
        \right)
        \Phi_p(\xi_t)
        +
        C_p n^{-p/2}W_t^{p/2}.
\]

Since $p > 2$, choosing sufficiently large $n = n(p)$ ensures that the $\ell_p$ term is at least as large as the $D_p$ term, proving the proposition.
\end{proof}
\subsection{The \(W_t^{p/2}\) term}

The error term in \cref{prop:fractional-drift-full} is
\[
        n^{-p/2}W_t^{p/2}.
\]
We control it by combining the exact first-moment estimate
\[
        \mb E W_t
        =
        \rho_n^t\left(1-\frac1n\right),
        \qquad
        \rho_n=1-\frac1{2n}+O(n^{-2}),
\]
with a second-moment estimate for \(W_t\), equivalently a fourth-moment
estimate for the centered scale vector \(\xi_t\).

\begin{lemma}\label{lem:W-moments-full}
For every \(\varepsilon>0\), there are constants
\(n_0=n_0(\varepsilon)\) and \(C_\varepsilon<\infty\) such that, for all
\(n\ge n_0\) and all \(t\ge0\),
\begin{equation}\label{eq:EW2-soft}
        \mb E W_t^2
        \le
        C_\varepsilon
        \exp\left\{-\left(\frac{29}{32}-\varepsilon\right)\frac{t}{n}\right\}.
\end{equation}
Consequently, for every \(2<p<4\) and every \(\varepsilon>0\), with
\begin{equation}\label{eq:sigma-p-full}
        \sigma_p
        :=
        \left(2-\frac p2\right)\frac12
        +
        \left(\frac p2-1\right)\frac{29}{32}
        =
        \frac{6+13p}{64},
\end{equation}
there are constants \(n_0=n_0(p,\varepsilon)\) and
\(C_{p,\varepsilon}<\infty\) such that, for all \(n\ge n_0\) and all
\(t\ge0\),
\begin{equation}\label{eq:EWp2-soft}
        \mb E W_t^{p/2}
        \le
        C_{p,\varepsilon}
        \exp\left\{-\left(\sigma_p-\varepsilon\right)\frac{t}{n}\right\}.
\end{equation}
\end{lemma}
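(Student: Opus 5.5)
The plan is to obtain \eqref{eq:EW2-soft} from a coupled drift inequality for the pair \((W_t^2,\Phi_4(\xi_t))\), where \(\Phi_4(x)=\sum_i x_i^4\). Then \eqref{eq:EWp2-soft} follows by interpolating with \eqref{eq:EW-full}.

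The number \(29/32\) is exactly \(2(1-m(4))\). Indeed \(\Gamma(9/2)=\tfrac{105}{16}\sqrt\pi\), so \(m(4)=\tfrac{2\cdot105/16}{24}=\tfrac{35}{64}\). This suggests that the second moment of \(W_t\) is limited by the quartic functional \(\Phi_4(\xi_t)\), whose natural contraction rate is \(29/(32n)\) per step. By contrast, \(W_t^2\) on its own would contract at rate roughly \(1/n\).

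\textbf{Step 1 (drift of \(W^2\)).} Fix the selected pair \(\{i,j\}\). As in the proof of \cref{prop:fractional-drift-full}, write \(\xi_{t+1}=z-h\one\), so that
\[
W_{t+1}=W_t+\Delta,\qquad \Delta=2(B\xi_i+(1-B)\xi_j)^2-\xi_i^2-\xi_j^2-nh^2 .
\]
Then \(\mb E[W_{t+1}^2\mid A_t]=W_t^2+2W_t\,\mb E[\Delta\mid A_t]+\mb E[\Delta^2\mid A_t]\).

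The proof of \cref{lem:R-tail-full} already gives \(\mb E[\Delta\mid A_t]=-\tfrac{n+2}{2n(n-1)}W_t\). For the quadratic term, \(|\Delta|\le C(\xi_i^2+\xi_j^2)\), and averaging over pairs gives \(\mb E[\Delta^2\mid A_t]\le \tfrac{C}{n}\Phi_4(\xi_t)\). Hence
\[
\mb E[W_{t+1}^2\mid A_t]\le\Bigl(1-\tfrac1n\Bigr)W_t^2+\tfrac{C}{n}\Phi_4(\xi_t).
\]
The bound \(\Phi_4\le W^2\) cannot be used here, since it would destroy the rate. This is why \(\Phi_4\) has to be tracked as well.

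\textbf{Step 2 (drift of \(\Phi_4\)).} I would redo the computation of \cref{prop:fractional-drift-full} at the endpoint \(p=4\). Since \(\mb E(Ba+(1-B)b)^4\) is an explicit polynomial, \cref{lem:two-variable} is replaced by an exact expansion
\[
\mb E(Ba+(1-B)b)^4=\tfrac{m(4)}{2}(a^4+b^4)+\ell_4(a^3b+ab^3)+\kappa\, a^2b^2 .
\]
Summing over pairs and using \(\sum\xi_i=0\), the \(\ell_4\) terms contribute \(-\tfrac{4\ell_4}{n(n-1)}\Phi_4\). The \(a^2b^2\) terms contribute at most \(\tfrac{C}{n^2}W_t^2\).

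For the recentering by \(h\), I would use the same sign argument for the linear term as in the proof of \cref{prop:fractional-drift-full}. The remaining Taylor terms are \(h^2\sum_k z_k^2\lesssim n^{-2}(\xi_i-\xi_j)^2W_t\) and \(nh^4\). Their pair averages are \(O(n^{-3})W_t^2\) and \(O(n^{-4})\Phi_4\). The result should be
\[
\mb E[\Phi_4(\xi_{t+1})\mid A_t]\le\Bigl(1-\tfrac{29}{32n}\Bigr)\Phi_4(\xi_t)+\tfrac{C}{n^2}W_t^2 .
\]

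\textbf{Step 3 (Lyapunov combination).} Set \(V_t=\Phi_4(\xi_t)+\delta W_t^2\) with a small \(\delta=\delta(\varepsilon)\). Steps 1–2 give
\[
\mb E[V_{t+1}\mid A_t]\le V_t-\tfrac1n\Bigl(\tfrac{29}{32}-C\delta\Bigr)\Phi_4(\xi_t)-\tfrac1n\Bigl(\delta-\tfrac{C}{n}\Bigr)W_t^2 .
\]
Choose \(\delta\) with \(C\delta<\varepsilon\). Then for \(n\ge n_0(\varepsilon)\) we have \(\delta-C/n\ge\delta(29/32-\varepsilon)\), so
\[
\mb E V_{t+1}\le\bigl(1-(29/32-\varepsilon)/n\bigr)\mb E V_t .
\]
Since \(V_0\le 2\) and \(W_t^2\le V_t/\delta\), this yields \eqref{eq:EW2-soft}.

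\textbf{Step 4 (interpolation).} Write \(W^{p/2}=W^{2-p/2}\cdot W^{2(p/2-1)}\); the exponents \(2-p/2\) and \(p/2-1\) sum to \(1\). Hölder's inequality then gives
\[
\mb E W^{p/2}\le(\mb E W)^{2-p/2}(\mb E W^2)^{p/2-1}.
\]
Combining \eqref{eq:EW-full}, \(\rho_n^t\le e^{-(1/2-\varepsilon)t/n}\), with \eqref{eq:EW2-soft} gives exactly the exponent \(\sigma_p\) in \eqref{eq:sigma-p-full}.

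The main obstacle is Step 2. Every cross term and every recentering term must be shown to be genuinely of order \(n^{-2}W^2\) (or smaller) rather than \(n^{-1}W^2\). A single \(n^{-1}W^2\) leak would feed \(W^2\), whose rate \(1>29/32\) is harmless, but it would push the off-diagonal coupling in Step 3 up to order \(1/n\) and spoil the \(29/32-\varepsilon\) rate.
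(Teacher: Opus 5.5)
Your proposal is correct and is essentially the paper's argument: a coupled recursion for $(\mb E W_t^2,\mb E\sum_i\xi_{t,i}^4)$ in which the feedback from $W_t^2$ into the quartic sum is only $O(n^{-2})$, followed by a weighted Lyapunov combination (your $\delta$ plays the role of the paper's $1/L$) and H\"older interpolation against \eqref{eq:EW-full}. The only difference is in execution: the paper computes the exact $2\times 2$ recursion matrix in \cref{app:fourth-moment}, whereas you derive the needed one-sided bounds from crude estimates for the $W^2$ drift and by rerunning the $\Phi_p$-drift argument at $p=4$ (exact quartic expansion plus the sign argument for the recentering term), which also explains why the rate is $29/32=2(1-m(4))$.
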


\begin{remark}
The properties of \(\sigma_p\) used below are verified in
\cref{lem:numerical-choice}: for \(p\) chosen sufficiently close to the
minimizer of \(p/[2(1-m(p))]\),
\[
        \sigma_p<2(1-m(p))
        \qquad\text{and}\qquad
        \frac{1+p/2}{\sigma_p}<C_{\mr{BRW}}.
\]
\end{remark}

\begin{proof}
For convenience of notation, let
\[
        M_t:=\mb E W_t^2,
        \qquad
        V_t:=\mb E\sum_i\xi_{t,i}^4 .
\]
By the calculation in \cref{app:fourth-moment},
\[
        \binom{M_{t+1}}{V_{t+1}}
        =
        \begin{pmatrix}
        1-\frac1n+O(n^{-2})
        &
        \frac{19}{16n}+O(n^{-2})\\[0.7ex]
        O(n^{-2})
        &
        1-\frac{29}{32n}+O(n^{-2})
        \end{pmatrix}
        \binom{M_t}{V_t},
\]
uniformly as \(n\to\infty\).  Fix \(\varepsilon>0\), and choose
\(L<\infty\) so large that
\[
        L\varepsilon>\frac{19}{16}.
\]

For all sufficiently large \(n\), expanding the matrix recursion gives
\[
\begin{aligned}
        M_{t+1}+LV_{t+1}
        &\le
        \left(1-\frac1n+O_L(n^{-2})\right)M_t       \\
        &\quad+
        \left[
        L\left(1-\frac{29}{32n}+O(n^{-2})\right)
        +
        \frac{19}{16n}+O(n^{-2})
        \right]V_t .
\end{aligned}
\]
The coefficient of \(M_t\) is at most
\[
        1-\left(\frac{29}{32}-\varepsilon\right)\frac1n
\]
for all large \(n\), since \(1>29/32-\varepsilon\).  For the coefficient
of \(V_t\), the choice \(L\varepsilon>19/16\) gives
\[
        L\left(1-\frac{29}{32n}+O(n^{-2})\right)
        +
        \frac{19}{16n}+O(n^{-2})
        \le
        L\left[
        1-\left(\frac{29}{32}-\varepsilon\right)\frac1n
        \right]
\]
for all large \(n\).  Hence
\[
        M_{t+1}+LV_{t+1}
        \le
        \left[
        1-\left(\frac{29}{32}-\varepsilon\right)\frac1n
        \right]
        (M_t+LV_t).
\]
Since \(M_0=O(1)\) and \(V_0=O(1)\), iteration gives
\[
        M_t
        \le
        M_t+LV_t
        \le
        C_\varepsilon
        \exp\left\{-\left(\frac{29}{32}-\varepsilon\right)\frac{t}{n}\right\}.
\]
This proves \eqref{eq:EW2-soft}.

We now interpolate.  Since \(2<p<4\), log-convexity gives
\[
        \mb E W_t^{p/2}
        \le
        (\mb E W_t)^{2-p/2}
        (\mb E W_t^2)^{p/2-1}.
\]
From \eqref{eq:EW-full} and
\[
        \rho_n=1-\frac1{2n}+O(n^{-2}),
\]
we have, for every \(\varepsilon>0\) and all sufficiently large \(n\),
\[
        \mb E W_t
        \le
        C_\varepsilon
        \exp\left\{-\left(\frac12-\varepsilon\right)\frac{t}{n}\right\}.
\]
Combining this bound with \eqref{eq:EW2-soft} gives
\[
\begin{aligned}
        \mb E W_t^{p/2}
        &\le
        C_{p,\varepsilon}
        \exp\left\{
        -\left[
        \left(2-\frac p2\right)\left(\frac12-\varepsilon\right)
        +
        \left(\frac p2-1\right)\left(\frac{29}{32}-\varepsilon\right)
        \right]\frac{t}{n}
        \right\} .
\end{aligned}
\]
Since
\[
        \left(2-\frac p2\right)+\left(\frac p2-1\right)=1,
\]
the exponent is exactly
\[
        -\left(\sigma_p-\varepsilon\right)\frac{t}{n}.
\]
Thus
\[
        \mb E W_t^{p/2}
        \le
        C_{p,\varepsilon}
        \exp\left\{-\left(\sigma_p-\varepsilon\right)\frac{t}{n}\right\},
\]
which proves \eqref{eq:EWp2-soft}.
\end{proof}

\subsection{Choice of \(p\) and proof of \cref{prop:Linfty-full}}

We now combine the previous estimates to prove \cref{prop:Linfty-full}.
This is the point in the proof of the upper bound where the constant
\(C_{\mr{BRW}}\) appears.

Fix \(2<p<4\).  The drift estimate in
\cref{prop:fractional-drift-full} shows that, apart from the accumulated
\(W_t^{p/2}\)-term,
\(\mb E\Phi_p(\xi_t)\) decays at rate
\[
        \frac{2(1-m(p))}{n}.
\]
Thus the contribution of the initial \(\Phi_p\)-mass at time
\(t=c n\log n\) is of order
\[
        n^{-2c(1-m(p))}.
\]
To convert this into an \(\ell^\infty\)-bound for
\[
        u_t=\frac{n\xi_t}{R_t},
\]
on the event \(\{R_t\ge r\}\), we use Markov's inequality:
\[
        \mb P\left(
        R_t\ge r,\ \norm{u_t}_\infty>\varepsilon
        \right)
        \le
        \frac{n^p}{r^p\varepsilon^p}\mb E\Phi_p(\xi_t).
\]
The contribution of the initial condition is therefore negligible at time
\(c n\log n\) provided
\[
        n^p n^{-2c(1-m(p))}\to0,
\]
or equivalently
\[
        c>\frac{p}{2(1-m(p))}.
\]
The numerical verification in \cref{app:numerical} shows that the
minimizer of \(p/[2(1-m(p))]\) lies in \((2,4)\).  Hence optimizing this
condition over the available range of \(p\)'s gives
\[
        \inf_{p>1}\frac{p}{2(1-m(p))}=C_{\mr{BRW}}.
\]

It remains to check that the accumulated \(W_t^{p/2}\)-term does not
force a larger value of \(c\).  The estimate in
\cref{lem:W-moments-full} controls this term through the exponent
\[
        \sigma_p:=\frac{6+13p}{64}.
\]
After the same Markov conversion, the error term is negligible provided
\[
        c>\frac{1+p/2}{\sigma_p}.
\]
We will also use that, for the chosen \(p\),
\[
        \sigma_p<2(1-m(p)),
\]
so that the convolution estimate below is governed by the slower decay
coming from the \(W_t^{p/2}\)-term.  The following numerical input
verifies these inequalities for some \(p\in(2,4)\) whenever
\(c>C_{\mr{BRW}}\).

\begin{lemma}
\label{lem:numerical-choice}
One has \(C_{\mr{BRW}}>3\).  Moreover, for every
\(c>C_{\mr{BRW}}\), there exists \(p\in(2,4)\) such that
\[
        c>\frac{p}{2(1-m(p))},
        \qquad
        c>\frac{1+p/2}{\sigma_p},
        \qquad
        2(1-m(p))>\sigma_p.
\]
\end{lemma}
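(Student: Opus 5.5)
The proof is a direct numerical verification, organized so that every numerical claim reduces to finitely many evaluations of the Gamma and digamma functions at explicit half-integer or nearby points.

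\textbf{Step 1: $C_{\mr{BRW}}>3$.} This is equivalent to $1-m(p)<p/6$ for every $p>1$.

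Since $m(p)=\mb E[U^p+(1-U)^p]$ is an expectation of functions convex in $p$, the function $m$ is convex. It therefore lies above its tangent line at $p=2$:
\[
m(p)\ge m(2)+m'(2)(p-2).
\]
Here $m(2)=3/4$. Also
\[
m'(2)=m(2)\bigl(\psi(5/2)-\psi(3)\bigr),
\]
where $\psi$ is the digamma function. Using the exact values $\psi(5/2)=2-\gamma-2\log2+2/3$ and $\psi(3)=3/2-\gamma$, this gives $m'(2)\approx -0.1647$. Hence
\[
1-m(p)\le \tfrac14+0.1647(p-2)=0.1647\,p-0.0794<p/6 .
\]
Since the infimum defining $C_{\mr{BRW}}$ is over $p>1$, this gives $C_{\mr{BRW}}\ge 3$. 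For strictness I will also need the infimum to be attained, and Step 2 supplies that. The tangent-line bound is strict away from $p=2$, and at $p=2$ one has $p/(2(1-m(p)))=4>3$, so the minimum value exceeds $3$.

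\textbf{Step 2: the infimum is attained at some $p_*\in(2,4)$.} The function $g(p):=p/(2(1-m(p)))$ is continuous on $(1,\infty)$. It tends to $+\infty$ as $p\downarrow1$, because $m(1)=1$. It also tends to $+\infty$ as $p\to\infty$, because $m(p)\to0$. Explicit values give $g(2)=4$, $g(3)=30/(2\cdot 18)\cdot$\,hmm, more simply $m(3)=5/8$ so $g(3)=4$, and $g(5/2)\approx 3.894<4$, using $m(5/2)=4/(\sqrt\pi\,\Gamma(7/2))\approx0.679$.

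I will show $g\ge4$ on $(1,2]$ and on $[3,\infty)$. On these ranges the tangent-line bound from Step 1, taken at $p=2$ and at $p=3$, together with the crude bound $1-m\le1$ for large $p$, suffices. It follows that $g$ attains its minimum $C_{\mr{BRW}}$ at some $p_*\in(2,3)\subset(2,4)$.

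\textbf{Step 3: the two auxiliary inequalities at and near $p_*$.} The remaining inequalities involve the explicit affine function $\sigma_p=(6+13p)/64$. I will check that on the whole interval $[2.3,2.7]$, which contains $p_*$ by a localization of $g$ as in Step 2, one has
\[
2(1-m(p))>\sigma_p
\qquad\text{and}\qquad
\frac{1+p/2}{\sigma_p}<3.8 .
\]
At $p=5/2$ these read $0.642>0.6016$ and $3.74<3.8$. Over the interval, monotonicity of $m$ and of $\sigma_p$ lets each quantity be bracketed by its values at the endpoints, giving margins of a few percent. Since $3.8<C_{\mr{BRW}}$, the second inequality gives $c>(1+p/2)/\sigma_p$ for every $c>C_{\mr{BRW}}$ and every such $p$.

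\textbf{Step 4: choice of $p$ for a given $c$.} Given $c>C_{\mr{BRW}}=g(p_*)$, continuity of $g$ provides $p$ near $p_*$, inside $[2.3,2.7]$, with $g(p)<c$. Steps 1 and 3 then give all three required inequalities.

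The main obstacle is making the localization of the minimizer and the endpoint bracketing rigorous rather than purely numerical. I would handle this with rigorous enclosures of $\Gamma$ at the relevant points: via $\Gamma(p+1/2)/\Gamma(p+1)$, Gautschi/Wendel-type bounds, or interval arithmetic as in \cref{app:numerical}.
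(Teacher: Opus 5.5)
\textbf{Gap in Step 3.} Steps 1 and 2 are sound. The convexity/tangent-line argument at $p=2$ is in fact a cleaner proof of $C_{\mr{BRW}}>3$ than the paper's interval certificate. Step 4 is fine provided Step 3 holds. But Step 3, which carries the real content of the second half of the lemma, fails as written.

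\begin{itemize}
\item \emph{The bound $3.8$ is false.} The map $p\mapsto(1+p/2)/\sigma_p=(64+32p)/(6+13p)$ is decreasing. So on $[2.3,2.7]$ its maximum is at $p=2.3$, where it equals $137.6/35.9\approx3.833>3.8$.
\item \emph{No lower bound on $C_{\mr{BRW}}$ near $3.8$.} Even with a corrected threshold you would need a certified lower bound for $C_{\mr{BRW}}$ above $3.833$. Steps 1--2 only give $C_{\mr{BRW}}>3$, so ``since $3.8<C_{\mr{BRW}}$'' is unsupported.
\item \emph{The localization is not justified.} $p_*\in[2.3,2.7]$ does not follow ``as in Step 2''. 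One has $g(2.3)-g(5/2)\approx0.003$, while the tangent lines at $2$ and $5/2$ only give a lower bound of about $3.88$ for $g(2.3)$, below $g(5/2)\approx3.895$.
\item \emph{Endpoint bracketing for the third inequality.} For $2(1-m(p))>\sigma_p$, bracketing by monotonicity fails, since both sides increase in $p$. What saves it is that $2(1-m)-\sigma_p$ is concave.
\end{itemize}
The paper handles exactly this step differently. It proves $g$ is unimodal ($H=1-m-pmD$ is increasing). It then certifies $p_*\in[2.41258,2.41259]$ and $C_{\mr{BRW}}\ge3.8916$ by interval arithmetic, and checks $(1+p/2)/\sigma_p<3.7792$ on that tiny interval.

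\textbf{A repair using only your own data.} Compare $(1+p/2)/\sigma_p$ with $g(p)$ itself and take $p=p_*$, rather than comparing with a numerical lower bound on $C_{\mr{BRW}}$. Set $s:=-m'(2)=\tfrac34(2\log2-\tfrac76)\approx0.16472$ and $q=p-2$. Your tangent bound $1-m(p)\le\tfrac14+sq$ gives
\[
(2+p)(1-m(p))-p\sigma_p\le q\Bigl[\bigl(4s-\tfrac{21}{32}\bigr)-\bigl(\tfrac{13}{64}-s\bigr)q\Bigr].
\]
The right side is negative for $q>0.07$, so $(1+p/2)/\sigma_p<g(p)$ for $p>2.07$.

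The same tangent bound gives $g(p)\ge p/\bigl(2(\tfrac14+s(p-2))\bigr)$, and this lower bound is decreasing in $p$. Hence $g\ge3.94>g(5/2)$ on $(1,2.1]$. Combined with your Step 2, this gives $p_*\in(2.1,3)$, and therefore $(1+p_*/2)/\sigma_{p_*}<g(p_*)=C_{\mr{BRW}}<c$.

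For the third inequality, concavity of $1-m$ gives $1-m(p)\ge\tfrac14+\tfrac{p-2}{8}$ on $[2,3]$. Hence $2(1-m(p))\ge\tfrac12+\tfrac{p-2}{4}>\tfrac12+\tfrac{13(p-2)}{64}=\sigma_p$ for $p\in(2,3]$.

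Together with $g(p_*)=C_{\mr{BRW}}<c$, this proves the lemma with $p=p_*$. The argument uses only $m(2)=\tfrac34$, $m(3)=\tfrac58$, $m'(2)$, $g(5/2)$ and convexity of $m$, with no interval arithmetic.
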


The certified verification is given in \cref{app:numerical}.  The
inequality \(C_{\mr{BRW}}>3\) will be used only in \cref{sec:tv-upgrade}.

\begin{proof}[Proof of \cref{prop:Linfty-full}]
Choose \(p\in(2,4)\) as in \cref{lem:numerical-choice}.  Choose
\(\theta< \sigma_p \) sufficiently close to \(\sigma_p\) so that
\begin{equation}\label{eq:theta-choice-flattening}
        c\theta>1+\frac p2,
        \qquad
        \theta<2(1-m(p)).
\end{equation}
By \cref{prop:fractional-drift-full} and \cref{lem:W-moments-full}, for
all sufficiently large \(n\),
\[
\begin{aligned}
        \mb E\Phi_p(\xi_{t+1})
        &\le
        \left(
        1-\frac{2(1-m(p))}{n}
        \right)
        \mb E\Phi_p(\xi_t)
        +
        C_{p,\theta}n^{-p/2}e^{-\theta t/n}.
\end{aligned}
\]
Iterating the recursion from time \(0\) to time \(t\), and using
\[
        \Phi_p(e^{(1)}-n^{-1}\one)=O(1),
\]
gives
\begin{align*}
        \mb E\Phi_p(\xi_t)
        &\le
        C_{p,\theta}
        \exp\left\{
        -2(1-m(p))\frac{t}{n}
        \right\}
        +
        C_{p,\theta}n^{-p/2}
        \sum_{s=0}^{t-1}
        \exp\left\{
        -2(1-m(p))\frac{t-1-s}{n}
        \right\}
        e^{-\theta s/n} \\
        & \leq C_{p,\theta} \exp\left\{ -2(1-m(p)) \frac tn \right\} + C_{p,\theta} n^{-p/2} e^{-\theta t/n} \sum_{s=0}^{t-1} \exp\left\{ -\paren{2(1-m(p)) - \theta} \frac sn \right\} \\
        & \leq C_{p,\theta} \exp\left\{ -2(1-m(p)) \frac tn \right\} + C_{p,\theta} n^{1-p/2} e^{-\theta t/n}. \stepcounter{equation}\tag{\theequation} \label{eq:Phi-bound-soft}
\end{align*}

Now let \(t_n=\ceil{c n\log n}\).  On \(\{R_{t_n}\ge r\}\),
\[
        |u_{t_n,i}|
        =
        \frac{n|\xi_{t_n,i}|}{R_{t_n}}
        \le
        \frac{n|\xi_{t_n,i}|}{r}.
\]
Hence
\[
        \left\{
        R_{t_n}\ge r,\ \norm{u_{t_n}}_\infty>\varepsilon
        \right\}
        \subseteq
        \left\{
        \Phi_p(\xi_{t_n})
        >
        \left(\frac{r\varepsilon}{n}\right)^p
        \right\}.
\]
By Markov's inequality and \eqref{eq:Phi-bound-soft},
\[
\begin{aligned}
        \mb P\left(
        R_{t_n}\ge r,\ 
        \norm{u_{t_n}}_\infty>\varepsilon
        \right)
        &\le
        \frac{n^p}{r^p\varepsilon^p}
        \mb E\Phi_p(\xi_{t_n})                                      \\
        &\le
        C_{r,\varepsilon,p,\theta}(
        n^{p-c\cdot2(1-m(p))} + n^{1 + p/2 - c\theta})
        .
\end{aligned}
\]

Both exponents are negative by \cref{eq:theta-choice-flattening}.  This proves \eqref{eq:Linfty-full}.
\end{proof}

\section{From scale-vector estimates to total variation}
\label{sec:tv-upgrade}

We prove total-variation convergence by controlling \(\chi^2\)-divergence.
For probability measures \(\mu,\nu\) on the same measurable space, define
\[
        \chi^2(\mu\Vert\nu)
        :=
        \begin{cases}
        \displaystyle
        \int \left(\frac{d\mu}{d\nu}-1\right)^2\,d\nu,
        & \mu\ll\nu,\\[2ex]
        \infty, & \text{otherwise}.
        \end{cases}
\]
Thus,
\[
        \norm{\mu-\nu}_{\on{TV}}
        =
        \frac12\int\left|\frac{d\mu}{d\nu}-1\right|\,d\nu
        \le
        \frac12\sqrt{\chi^2(\mu\Vert\nu)}.
\]
For the mixtures \(\mb E[F_A]\) arising from the scale-vector
representation, the following lemma bounds the \(\chi^2\)-divergence
in terms of two independent copies of \(A\).

\begin{lemma}\label{lem:product-gamma-full}
Let \(A\) be a random vector in \((0,\infty)^n\), let
\[
        \mu:=\mb E[F_A],
\]
and let \(A'\) be an independent copy of \(A\).  Then
\begin{equation}\label{eq:chi-product-full}
        1+\chi^2(\mu\Vert\pi_n)
        \le
        \mb E_{A,A'}
        \prod_{i=1}^n(A_i+A_i'-A_iA_i')^{-1/2},
\end{equation}
with the convention that the right-hand side is infinite if one of the
one-dimensional integrals diverges.
\end{lemma}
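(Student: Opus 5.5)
The plan is to lift the comparison from the simplex to \((0,\infty)^n\), where both measures become (mixtures of) product gamma laws, and then to compute the \(\chi^2\)-divergence there exactly. For \(a\in(0,\infty)^n\), let \(\gamma_a:=\bigotimes_{i=1}^n\on{Gamma}(1/2,a_i)\), the law of \((a_1G_1,\ldots,a_nG_n)\) with \(G_i\stackrel{\mr{iid}}{\sim}\on{Gamma}(1/2,1)\). Let \(N(z):=z/\sum_kz_k\) be the normalization map \((0,\infty)^n\to\Delta_{n-1}\). By definition \(F_a=N_\#\gamma_a\), and in particular \(\pi_n=F_{\one}=N_\#\gamma_{\one}\).

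Since push-forward commutes with mixing, \(\mu=\mb E[F_A]=N_\#\tilde\mu\), where \(\tilde\mu:=\mb E[\gamma_A]\). The \(\chi^2\)-divergence is an \(f\)-divergence, so it satisfies the data-processing inequality under measurable maps. This gives
\[
        \chi^2(\mu\Vert\pi_n)\le\chi^2(\tilde\mu\Vert\gamma_{\one}).
\]
It therefore suffices to bound the right-hand side. If \(\tilde\mu\not\ll\gamma_{\one}\), we would have nothing to prove, but in fact absolute continuity holds: each \(\gamma_a\) has a positive density on \((0,\infty)^n\), hence so does the mixture.

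The density of \(\gamma_a\) with respect to \(\gamma_{\one}\) is
\[
        g_a(x)=\prod_{i=1}^n a_i^{-1/2}e^{-x_i/a_i+x_i}.
\]
Hence \(d\tilde\mu/d\gamma_{\one}=\mb E_A g_A\), and
\[
        1+\chi^2(\tilde\mu\Vert\gamma_{\one})
        =\int\bigl(\mb E_Ag_A\bigr)^2d\gamma_{\one}
        =\mb E_{A,A'}\int g_Ag_{A'}\,d\gamma_{\one}.
\]
The last step writes the square as an expectation over two independent copies \(A,A'\) and exchanges integrals by Tonelli, which is legitimate because everything is nonnegative; this is why infinite values are allowed and the convention in the statement is needed.

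The inner integral factorizes over coordinates. Each factor equals
\[
        (a_ia_i')^{-1/2}\frac{1}{\Gamma(1/2)}\int_0^\infty x^{-1/2}e^{-x(1/a_i+1/a_i'-1)}\,dx .
\]
This is \((a_ia_i')^{-1/2}(1/a_i+1/a_i'-1)^{-1/2}=(a_i+a_i'-a_ia_i')^{-1/2}\) when \(1/a_i+1/a_i'>1\), and \(+\infty\) otherwise. Combining this with the data-processing inequality yields \eqref{eq:chi-product-full}.

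There is no real obstacle here. The only points needing care are that data processing is used in the direction that gives an upper bound (the lemma claims only \(\le\)), and the Tonelli justification when some one-dimensional integrals diverge.
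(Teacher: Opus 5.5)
Your proposal is correct and follows essentially the same route as the paper. It lifts to the mixture of product gamma laws, applies data processing for \(\chi^2\) under the normalization map, expands the square over two independent copies using Tonelli, and evaluates the one-dimensional gamma integral to get \((a_i+a_i'-a_ia_i')^{-1/2}\), which you compute more explicitly than the paper does.
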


\begin{proof}
For \(a=(a_1,\ldots,a_n)\in(0,\infty)^n\), let \(\Gamma_a\) be the
product law of independent \(\on{Gamma}(1/2,a_i)\) random variables.  The
measure \(F_a\) is the push-forward of \(\Gamma_a\) under the
normalization map
\[
        x\mapsto \frac{x}{\sum_i x_i}.
\]
Let
\[
        \bar\Gamma:=\mb E[\Gamma_A].
\]
The measures \(\mu\) and \(\pi_n=F_{\one}\) are the push-forwards of
\(\bar\Gamma\) and \(\Gamma_{\one}\), respectively, under the
normalization map.  By data processing for \(\chi^2\)-divergence,
\[
        \chi^2(\mu\Vert\pi_n)
        \le
        \chi^2(\bar\Gamma\Vert\Gamma_{\one}).
\]
Writing \(L_a=d\Gamma_a/d\Gamma_{\one}\), Fubini gives
\[
\begin{aligned}
        1+\chi^2(\bar\Gamma\Vert\Gamma_{\one})
        &=
        \int \left(\mb E_A L_A(x)\right)^2\,\Gamma_{\one}(dx)       \\
        &=
        \mb E_{A,A'}
        \int L_A(x)L_{A'}(x)\,\Gamma_{\one}(dx).
\end{aligned}
\]
The last integral factorizes over coordinates.  In one coordinate, for
shape \(1/2\) and scale parameters \(a,b>0\),
\[
        \int L_aL_b\,d\on{Gamma}(1/2,1)
        =
        (a+b-ab)^{-1/2}
\]
when \(a+b-ab>0\), and the integral is infinite otherwise.  Multiplying
over coordinates proves \eqref{eq:chi-product-full}.
\end{proof}

We next describe how the product in \eqref{eq:chi-product-full} will be
estimated.  Fix \(c>3\), and set
\[
        t_n=\ceil{c n\log n}.
\]
If \(u\) and \(v\) are independent copies of \(u_{t_n}\), then
\[
        (1+u_i)+(1+v_i)-(1+u_i)(1+v_i)
        =
        1-u_iv_i.
\]
Thus applying \cref{lem:product-gamma-full} with \(A=\one+u\) and
\(A'=\one+v\) leads to the product
\[
        \prod_{i=1}^n(1-u_iv_i)^{-1/2}.
\]

We will use the numerical inequality
\begin{equation}\label{eq:log-overlap-quadratic}
        -\frac12\log(1-x)
        \le
        \frac{x}{2}+\frac12x^2,
        \qquad |x|\le\frac12 .
\end{equation}
On the event where \(\norm{u}_\infty,\norm{v}_\infty\le\delta\), this gives
\[
        \log\prod_{i=1}^n(1-u_iv_i)^{-1/2}
        \le
        \frac12\sum_{i=1}^n u_iv_i
        +
        \frac12\sum_{i=1}^n u_i^2v_i^2 .
\]
The \(\ell^\infty\)-bound justifies this expansion, but it does not by
itself control the signed sum \(\sum_i u_iv_i\).  The cancellation comes
from the coordinates \(2,\ldots,n\).  The law of the scale process
started from \(e^{(1)}\) is invariant under permutations of these
coordinates.  Thus, after conditioning on \(u_1,v_1\) and on the two
multisets
\[
        \{u_2,\ldots,u_n\},
        \qquad
        \{v_2,\ldots,v_n\},
\]
the contribution from coordinates \(2,\ldots,n\) is a uniformly permuted
pairing.  We use the following estimate for such pairings.  It will be
applied with \(\lambda=D=1/2\).  The proof is deferred to
\cref{sub:permutation-estimate}.

\begin{lemma}\label{lem:permutation-full}
Fix \(\lambda\in\mb R\) and \(D\ge0\).  There exist constants
\(0<\delta_0<1\) and \(C<\infty\) such that the following holds.  Let
\(m\ge1\), let \(a,b\in\mb R^m\) satisfy
\[
        \max_i |a_i|\vee\max_i |b_i|\le\delta_0,
\]
and let \(\Pi\) be a uniform random permutation of \(\{1,\ldots,m\}\).
Then
\begin{equation}\label{eq:perm-bound-full}
 \mb E_\Pi\exp\left\{
 \lambda\sum_{i=1}^m a_i b_{\Pi(i)}
 +D\sum_{i=1}^m a_i^2b_{\Pi(i)}^2
 \right\}
 \le
 \exp\left\{
 \lambda\frac{(\sum_i a_i)(\sum_i b_i)}{m}
 +C\frac{\norm{a}_2^2\norm{b}_2^2}{m}
 \right\}.
\end{equation}
\end{lemma}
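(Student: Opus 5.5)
The plan is to first strip off the mean, which produces the term \(\lambda(\sum_i a_i)(\sum_i b_i)/m\) exactly, and then to show that the centered bilinear statistic and the quadratic statistic each have an exponential moment of size \(\exp\{O(\norm a_2^2\norm b_2^2/m)\}\). For the latter I will use concentration for Hoeffding-type combinatorial statistics \(\sum_i c_{i,\Pi(i)}\).

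\textbf{Step 1 (centering).} Write \(a_i=\bar a+\tilde a_i\) and \(b_i=\bar b+\tilde b_i\), where \(\bar a=m^{-1}\sum_i a_i\) and \(\bar b=m^{-1}\sum_i b_i\). Then \(\sum_i\tilde a_i=\sum_i\tilde b_i=0\), and for every permutation
\[
\sum_i a_ib_{\Pi(i)}=m\bar a\bar b+\sum_i\tilde a_i\tilde b_{\Pi(i)}=\frac{(\sum_i a_i)(\sum_i b_i)}{m}+X_\Pi,
\qquad X_\Pi:=\sum_i\tilde a_i\tilde b_{\Pi(i)} .
\]
Also \(\norm{\tilde a}_2\le\norm a_2\), \(\norm{\tilde b}_2\le\norm b_2\), and \(|\tilde a_i|,|\tilde b_i|\le2\delta_0\).

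Set \(Y_\Pi:=\sum_i a_i^2b_{\Pi(i)}^2\). By Cauchy--Schwarz,
\[
\mb E e^{\lambda X_\Pi+DY_\Pi}\le\bigl(\mb E e^{2\lambda X_\Pi}\bigr)^{1/2}\bigl(\mb E e^{2DY_\Pi}\bigr)^{1/2}.
\]
So it suffices to bound each factor by \(\exp\{C\norm a_2^2\norm b_2^2/m\}\).

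\textbf{Step 2 (the quadratic statistic).} The statistic \(Y_\Pi=\sum_i c_{i,\Pi(i)}\) has nonnegative entries \(c_{ij}=a_i^2b_j^2\le\delta_0^4\), and its mean is \(\mb E Y_\Pi=\norm a_2^2\norm b_2^2/m\). I will invoke Chatterjee's exchangeable-pair concentration for Hoeffding's combinatorial statistic with entries in \([0,M]\). Its proof actually gives an exponential moment bound
\[
\mb E e^{\theta(Y_\Pi-\mb E Y_\Pi)}\le\exp\{C\theta^2M\,\mb E Y_\Pi\}\qquad\text{for }\theta M\le c_0 .
\]
With \(\theta=2D\) and \(M=\delta_0^4\), and \(\delta_0\) small enough that \(2D\delta_0^4\le c_0\), this yields \(\mb E e^{2DY_\Pi}\le\exp\{C\norm a_2^2\norm b_2^2/m\}\). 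If one prefers to avoid citing the result, it can be reproved directly: take the transposition exchangeable pair \(\Pi'=\Pi\circ(I\,J)\) and differentiate \(\theta\mapsto\mb E e^{\theta Y_\Pi}\) by the standard argument.

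\textbf{Step 3 (the signed bilinear statistic), the main obstacle.} Here \(\mb E X_\Pi=0\), but the entries \(\tilde a_i\tilde b_j\) have both signs. I will use the same exchangeable pair \((\Pi,\Pi\circ(I\,J))\) with \(I,J\) uniform. This gives the antisymmetric function \(F=\tfrac m2(X_\Pi-X_{\Pi'})\), and the standard Stein--Chatterjee mgf bound \(\mb E e^{\theta X}\le\mb E e^{C\theta^2v(\Pi)}\), valid for \(|\theta|\) times the increment size small. The increment size is \(O(\delta_0^2)\), which is where small \(\delta_0\) is needed. The conditional variance proxy is
\[
v(\Pi)=\frac1{m}\sum_{i,j}(\tilde a_i-\tilde a_j)^2(\tilde b_{\Pi(i)}-\tilde b_{\Pi(j)})^2 .
\]
Expanding \(v(\Pi)\), and using \(\sum_i\tilde a_i=\sum_i\tilde b_i=0\) so that cross terms factor out, gives
\[
v(\Pi)\le C\Bigl(\frac{\norm{\tilde a}_2^2\norm{\tilde b}_2^2}{m}+\sum_i\tilde a_i^2\tilde b_{\Pi(i)}^2\Bigr).
\]
The first piece is deterministic and of the right size. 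The second is again a nonnegative combinatorial statistic with entries at most \(16\delta_0^4\) and mean at most \(\norm a_2^2\norm b_2^2/m\), so Step 2 controls its exponential moment at parameter \(C\theta^2\). Together these give \(\mb E e^{2\lambda X_\Pi}\le\exp\{C\norm a_2^2\norm b_2^2/m\}\).

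The delicate points are setting up the self-bounding mgf inequality with the random variance proxy correctly, and checking that the ranges of \(\theta\) required are allowed after shrinking \(\delta_0\) depending on \(\lambda\) and \(D\). Combining Steps 1--3 gives \eqref{eq:perm-bound-full}.
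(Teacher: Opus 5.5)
Your proposal is correct. After the step you share with the paper (centering, then Cauchy--Schwarz), it takes a genuinely different route.

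The paper handles both factors with one tool. It proves a centered Laplace bound for $\sum_i\alpha_i\beta_{\Pi(i)}$ from the Carlen--Lieb--Loss Hadamard-type inequality for permanents, $\abs{\mathrm{per}(Z)}\le \frac{m!}{m^{m/2}}\prod_i(\sum_j\abs{Z_{ij}}^2)^{1/2}$, applied to $Z_{ij}=e^{s\alpha_i\beta_j}$. This collapses the permutation average into a product of row averages $\frac1m\sum_j e^{2s\alpha_i\beta_j}$. A second-order Taylor expansion together with $\sum_j\beta_j=0$ then bounds each row average by $1+Cs^2\alpha_i^2\norm{\beta}_2^2/m$. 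The quadratic statistic goes through the same lemma after a second centering, of $a_i^2$ and $b_i^2$ around $\norm{a}_2^2/m$ and $\norm{b}_2^2/m$.

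Your exchangeable-pair argument avoids the permanent inequality. It treats $Y_\Pi$ directly as a nonnegative Hoeffding statistic: Chatterjee's bound $v\le M(Y+\mb EY)$ gives $\log\mb E e^{\theta(Y-\mb EY)}\le\theta^2M\,\mb EY/(1-\theta M)$ for $0\le\theta M<1$, so no second centering is needed. The price is that the signed term has a random variance proxy, which forces a decoupling step the permanent route never meets. The paper's proof is shorter and uniform. Yours uses more elementary inputs and would extend to combinatorial statistics $\sum_i c_{i,\Pi(i)}$ that are not of product form.

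Three points need tightening.

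First, the correct normalization is $v(\Pi)=\frac1{4m}\sum_{i,j}(\tilde a_i-\tilde a_j)^2(\tilde b_{\Pi(i)}-\tilde b_{\Pi(j)})^2$. Expanding it leaves the cross term $\frac1m(\sum_i\tilde a_i\tilde b_{\Pi(i)})^2=\frac1m X_\Pi^2$, which centering does not remove. Bound it by Cauchy--Schwarz, $X_\Pi^2\le\norm{\tilde a}_2^2\norm{\tilde b}_2^2$. Alternatively, skip the expansion and apply $(x-y)^2\le2x^2+2y^2$ to both factors, which gives $v(\Pi)\le 2\sum_i\tilde a_i^2\tilde b_{\Pi(i)}^2+2\norm{a}_2^2\norm{b}_2^2/m$.

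Second, let $L(\theta)=\log\mb E e^{\theta X_\Pi}$ and let $Q_\theta$ be the tilted law. The inequality $L'(\theta)\le\abs{\theta}\,\mb E_{Q_\theta}v$ holds for every $\theta$, by $\abs{e^x-e^y}\le\frac12\abs{x-y}(e^x+e^y)$. No smallness of increments is needed at this point.

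Third, the decoupling can be done with Donsker--Varadhan: $\mb E_{Q_\theta}Y'\le\gamma^{-1}\bigl(\log\mb E e^{\gamma Y'}+D(Q_\theta\Vert P)\bigr)$. Here $D(Q_\theta\Vert P)=\theta L'(\theta)-L(\theta)\le\theta L'(\theta)$, because $L\ge0$ by Jensen. Take $\gamma=16\lambda^2$. Then the $L'$ term is absorbed for $0\le\theta\le2\abs{\lambda}$. Step 2 applies at parameter $\gamma$ once $16\gamma\delta_0^4\le\frac12$. Integrating gives $L(\theta)\le C\theta^2\norm{a}_2^2\norm{b}_2^2/m$, and $\theta<0$ follows by replacing $b$ with $-b$. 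This is exactly where $\delta_0$ must depend on $\lambda$, and on $D$ through Step 2, as you anticipated.
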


Let \(\delta_0\) be the constant in \cref{lem:permutation-full} with
\(\lambda=D=1/2\).  By decreasing \(\delta_0\), we assume
that \(\delta_0\le1/4\).

For \(r>0\), set, with \(\rho_n\) as in \cref{lem:R-tail-full},
\[
        L_n(r):=\frac{n^2\rho_n^{t_n}}{r^2}.
\]
For \(K<\infty\) and \(0<\delta<\delta_0\), define the good event
\begin{equation}\label{eq:good-event-full}
        G_n(r,K,\delta)
        :=
        \left\{
        R_{t_n}\ge r,\ 
        \norm{u_{t_n}}_\infty\le\delta,\ 
        \norm{u_{t_n}}_2^2\le K L_n(r)
        \right\}.
\end{equation}
The three conditions in the good event have separate roles.  The lower
bound on \(R_{t_n}\) lets us estimate \(u_{t_n}=n\xi_{t_n}/R_{t_n}\).
The \(\ell^\infty\)-bound justifies \eqref{eq:log-overlap-quadratic} and
allows us to apply \cref{lem:permutation-full}.  The \(\ell^2\)-bound
controls the term
\[
        \frac{\norm{u}_2^2\norm{v}_2^2}{n}
\]
in \eqref{eq:perm-bound-full}.  Indeed, on \(\{R_{t_n}\ge r\}\),
\[
        \norm{u_{t_n}}_2^2
        =
        \frac{n^2W_{t_n}}{R_{t_n}^2}
        \le
        \frac{n^2W_{t_n}}{r^2}.
\]
Since \(\mb E W_{t_n}=\rho_n^{t_n}(1-1/n)\), the quantity \(L_n(r)\) is
the corresponding first-moment scale for \(\norm{u_{t_n}}_2^2\) on this
event.  The event \(G_n(r,K,\delta)\) is invariant under permutations of
coordinates \(2,\ldots,n\).

\begin{proposition}
\label{prop:good-smoothing-full}
Let \(t_n=\ceil{c n\log n}\) with \(c>3\).  Fix \(r>0\),
\(K<\infty\), and \(0<\delta<\delta_0\).  Whenever
\(\mb P(G_n(r,K,\delta))>0\), set
\[
        \mu_{n,G}:=
        \mb E\left[F_{A_t}\mid G_n(r,K,\delta)\right].
\]
Then
\begin{equation}\label{eq:good-chi-full}
        \limsup_{n\to\infty}
        \chi^2(\mu_{n,G}\Vert\pi_n)
        \le
        e^{C\delta^2}-1,
\end{equation}
where \(C\) is universal.
\end{proposition}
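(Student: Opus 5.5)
We apply \cref{lem:product-gamma-full} to the random scale vector \(A=\one+u\), where \(u\) has the law of \(u_{t_n}\) conditioned on \(G_n:=G_n(r,K,\delta)\). This is legitimate because \(F_{A_{t_n}}=F_{\one+u_{t_n}}\), and on \(G_n\) we have \(\one+u\in(0,\infty)^n\) since \(\delta<1\). Hence \(\mu_{n,G}=\mb E[F_{\one+u}]\). With \(v\) an independent copy of \(u\), the identity \((1+u_i)+(1+v_i)-(1+u_i)(1+v_i)=1-u_iv_i\) gives
\[
        1+\chi^2(\mu_{n,G}\Vert\pi_n)\le\mb E\prod_{i=1}^n(1-u_iv_i)^{-1/2}.
\]
On \(G_n\) we have \(|u_iv_i|\le\delta^2\le 1/16\), so \eqref{eq:log-overlap-quadratic} applies. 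It bounds the product by
\[
        \exp\Bigl\{\tfrac12\sum_i u_iv_i+\tfrac12\sum_i u_i^2v_i^2\Bigr\}.
\]
We split off coordinate \(1\), whose contribution is at most \(e^{\delta^2/2+\delta^4/2}\).

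Next we use the symmetry. The scale chain started from \(e^{(1)}\) has a law invariant under permutations of coordinates \(2,\dots,n\), since the update rule is permutation-equivariant and \(e^{(1)}\) is fixed by such permutations. The event \(G_n\) is invariant under the same permutations. Therefore, under the conditioned law, \((u_2,\dots,u_n)\) composed with an independent uniform permutation of \(\{2,\dots,n\}\) has the same law as \((u_2,\dots,u_n)\), and this stays true jointly with \(u_1\).

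Using this, we replace \(v\) by \(v\circ\Pi\) on coordinates \(2,\dots,n\), where \(\Pi\) is uniform and independent of \((u,v)\). Conditioning on \(u,v\), we apply \cref{lem:permutation-full} with \(m=n-1\), \(a=(u_i)_{i\ge2}\), \(b=(v_i)_{i\ge2}\), \(\lambda=D=1/2\). The hypothesis \(\max|a_i|\vee\max|b_i|\le\delta<\delta_0\) holds on \(G_n\). Since \(\sum_{i=1}^n u_i=0\), we have \(\sum_{i\ge2}a_i=-u_1\) and \(\sum_{i\ge2}b_i=-v_1\). The mean term is therefore \(\tfrac12 u_1v_1/(n-1)\le\delta^2\), and we obtain
\[
        1+\chi^2(\mu_{n,G}\Vert\pi_n)\le \mb E\exp\Bigl\{C\delta^2+C\frac{\norm{u}_2^2\norm{v}_2^2}{n-1}\Bigr\}.
\]

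It remains to show that the last term vanishes uniformly on \(G_n\). On \(G_n\), \(\norm{u}_2^2,\norm{v}_2^2\le KL_n(r)=Kn^2\rho_n^{t_n}/r^2\). By definition, \(\rho_n=1-\frac{n+2}{2n(n-1)}\le 1-\frac1{2n}\), and \(t_n\ge cn\log n\), so \(\rho_n^{2t_n}\le e^{-t_n/n}\le n^{-c}\). Hence
\[
        \frac{\norm{u}_2^2\norm{v}_2^2}{n-1}\le \frac{2K^2}{r^4}\,n^{3-c}\longrightarrow0,
\]
because \(c>3\). This bound is deterministic on the good event, so it passes through the expectation. Taking \(\limsup\) gives \eqref{eq:good-chi-full} with a universal \(C\), after absorbing \(\delta^4\le\delta^2\).

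The main obstacle is the symmetry step. One must check that conditioning on \(G_n\) preserves exchangeability of coordinates \(2,\dots,n\), so that the pairing between \(u\) and \(v\) can be taken to be an independent uniform random permutation. One must also confirm that \cref{lem:permutation-full} is applied with the sums \(\sum_{i\ge 2}a_i=-u_1\) and \(\sum_{i\ge2}b_i=-v_1\) rather than \(0\). Everything else is bookkeeping; the heavy lifting is \cref{lem:permutation-full}, which we assume here.
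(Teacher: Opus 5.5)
Your proof is correct and follows the paper's argument step by step. The steps are the product-gamma $\chi^2$ bound for $A=\one+u$, the quadratic bound on $-\frac12\log(1-x)$, exchangeability of coordinates $2,\dots,n$ under the conditioned law, \cref{lem:permutation-full} with $m=n-1$, $\lambda=D=1/2$ and coordinate sums $-u_1,-v_1$, and the $\ell^2$ control on $G_n$ combined with $c>3$. The differences are cosmetic: you realize the symmetry by composing $v$ with an independent uniform permutation rather than conditioning on the two multisets, and you use $\rho_n\le 1-\frac1{2n}$ to get an explicit $n^{3-c}$ bound rather than $n^{3-c+o(1)}$.
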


\begin{proof}
Recall $A_t = \one + u_t$.  Let \(u,v\) be independent copies of \(u_{t_n}\), each conditioned on
the good event \(G_n(r,K,\delta)\).

\paragraph{\bf The \(\chi^2\) estimate.}
On the good event,
\[
        1+u_i\ge1-\delta>0
\]
for every \(i\), so \cref{lem:product-gamma-full} applies to
\(A=\one+u\).  It gives
\begin{equation}\label{eq:J-bound-full}
        1+\chi^2(\mu_{n,G}\Vert\pi_n)
        \le
        \mb E J(u,v),
        \qquad
        J(u,v):=\prod_{i=1}^n(1-u_iv_i)^{-1/2}.
\end{equation}
Since \(|u_iv_i|\le\delta^2<1/2\), \eqref{eq:log-overlap-quadratic}
implies
\begin{equation}\label{eq:J-log-full}
        \log J(u,v)
        \le
        \frac12\sum_{i=1}^n u_iv_i
        +
        \frac12\sum_{i=1}^n u_i^2v_i^2 .
\end{equation}

\paragraph{\bf The permutation estimate.}
The law of the scale process started from \(e^{(1)}\) is invariant under
permutations of coordinates \(2,\ldots,n\), and the good event
\(G_n(r,K,\delta)\) has the same invariance.  Hence the law of
\(u_{t_n}\), conditioned on this event, is invariant under permutations
of coordinates \(2,\ldots,n\).  Thus, conditioned on
\(u_1\) and on the multiset
\[
        \{u_2,\ldots,u_n\},
\]
counted with multiplicity, the labels of \(u_2,\ldots,u_n\) are uniform.
The same statement holds for \(v\), independently.

Let \(\mc H\) be the sigma-algebra generated by \(u_1,v_1\) and by the
two multisets
\[
        \{u_2,\ldots,u_n\},
        \qquad
        \{v_2,\ldots,v_n\},
\]
both counted with multiplicity.  Conditioned on \(\mc H\), we may write
the outside contribution as
\[
        \sum_{i=2}^n u_i v_{\Pi(i)}
\]
for a uniform random permutation \(\Pi\) of \(\{2,\ldots,n\}\).  After
reindexing coordinates \(2,\ldots,n\), \cref{lem:permutation-full}
applies with \(m=n-1\) and \(\lambda=D=1/2\).

Since \(\sum_i u_i=\sum_i v_i=0\), we have
\[
        \sum_{i=2}^n u_i=-u_1,
        \qquad
        \sum_{i=2}^n v_i=-v_1 .
\]
Hence \cref{lem:permutation-full} gives, conditioned on \(\mc H\),
\begin{equation}\label{eq:conditional-outside-smoothing}
\mb E_\Pi\exp\left\{
        \frac12\sum_{i=2}^n u_i v_{\Pi(i)}
        +
        \frac12\sum_{i=2}^n u_i^2v_{\Pi(i)}^2
        \right\}                                           
 \le
        \exp\left\{
        \frac{u_1v_1}{2(n-1)}
        +
        C\frac{\norm{u}_2^2\norm{v}_2^2}{n-1}
        \right\}.
\end{equation}

Combining this with the contribution from coordinate \(1\), we get from
\eqref{eq:J-log-full} and \eqref{eq:conditional-outside-smoothing} that
\[
        \mb E[J(u,v)\mid \mc H]
        \le
        \exp\left\{
        \frac12u_1v_1
        +
        \frac12u_1^2v_1^2
        +
        \frac{u_1v_1}{2(n-1)}
        +
        C\frac{\norm{u}_2^2\norm{v}_2^2}{n-1}
        \right\}.
\]
On the good event,
\[
        |u_1|,|v_1|\le\delta,
        \qquad
        \norm{u}_2^2,\norm{v}_2^2\le K L_n(r).
\]
Since \(\delta\le\delta_0\le1/4\), the terms involving \(u_1v_1\) and
\(u_1^2v_1^2\) are bounded above by \(C\delta^2\).  Therefore
\[
        \mb E[J(u,v)\mid \mc H]
        \le
        \exp\left\{
        C\delta^2
        +
        C\frac{K^2L_n(r)^2}{n-1}
        \right\}.
\]
Averaging over \(\mc H\) and using \eqref{eq:J-bound-full}, we obtain
\[
        1+\chi^2(\mu_{n,G}\Vert\pi_n)
        \le
        \exp\left\{
        C\delta^2
        +
        C\frac{K^2L_n(r)^2}{n-1}
        \right\}.
\]

\paragraph{\bf Estimate for \(L_n(r)\).}
It remains to estimate \(L_n(r)^2/(n-1)\).  Since
\[
        \rho_n=1-\frac1{2n}+O(n^{-2}),
        \qquad
        t_n=\ceil{c n\log n},
\]
we have
\[
        \rho_n^{t_n}
        =
        n^{-c/2+o(1)}.
\]
Therefore
\[
        L_n(r)
        =
        \frac{n^2\rho_n^{t_n}}{r^2}
        =
        r^{-2}n^{2-c/2+o(1)},
\]
and hence
\[
        \frac{L_n(r)^2}{n-1}
        =
        O_r(n^{3-c+o(1)}).
\]
This tends to zero because \(c>3\).  Consequently,
\[
        \limsup_{n\to\infty}
        \{1+\chi^2(\mu_{n,G}\Vert\pi_n)\}
        \le e^{C\delta^2},
\]
which proves \eqref{eq:good-chi-full}.
\end{proof}

We now complete the proof of the upper bound.  The assumption
\(c>C_{\mr{BRW}}\) is used through \cref{prop:Linfty-full}.  The
\(\chi^2\) estimate above only requires \(c>3\).

\begin{proof}[Proof of the upper bound in \cref{thm:main-cutoff}]
Fix \(c>C_{\mr{BRW}}\), and set \(t_n=\ceil{c n\log n}\).  By
\cref{prop:hidden-rep} and the scale invariance \(F_{\lambda a}=F_a\),
\[
        \mu_n
        :=
        Q_n^{t_n}(e^{(1)}, \cdot)
        =
        \mb E[F_{\one+u_{t_n}}].
\]
We prove that \(\norm{\mu_n-\pi_n}_{\on{TV}}\to0\).

Fix \(r>0\), \(K<\infty\), and \(0<\delta<\delta_0\), and write
\[
        \mc G_n:=G_n(r,K,\delta).
\]
We first estimate \(\mb P(\mc G_n^c)\).  By \cref{lem:R-tail-full},
\begin{equation}\label{eq:bad-R-full}
        \limsup_{n\to\infty}
        \mb P(R_{t_n}<r)
        \le
        C\sqrt r .
\end{equation}
The \(\ell^\infty\) estimate in \cref{prop:Linfty-full} gives
\[
        \mb P\left(
        R_{t_n}\ge r,\ 
        \norm{u_{t_n}}_\infty>\delta
        \right)
        \longrightarrow0 .
\]
Finally, on \(\{R_{t_n}\ge r\}\),
\[
        \norm{u_{t_n}}_2^2
        =
        \frac{n^2W_{t_n}}{R_{t_n}^2}
        \le
        \frac{n^2W_{t_n}}{r^2}.
\]
Since
\[
        \mb E W_{t_n}=\rho_n^{t_n}\left(1-\frac1n\right),
        \qquad
        L_n(r)=\frac{n^2\rho_n^{t_n}}{r^2},
\]
Markov's inequality gives
\begin{equation}\label{eq:bad-L2-full}
        \mb P\left(
        R_{t_n}\ge r,\ 
        \norm{u_{t_n}}_2^2>K L_n(r)
        \right)
        \le
        \frac1K .
\end{equation}
Combining \eqref{eq:bad-R-full}--\eqref{eq:bad-L2-full}, we obtain
\begin{equation}\label{eq:G-prob-full}
        \limsup_{n\to\infty}
        \mb P(\mc G_n^c)
        \le
        C\sqrt r+\frac1K .
\end{equation}

Choose \(r>0\) sufficiently small and \(K<\infty\) sufficiently large so
that the right-hand side of \eqref{eq:G-prob-full} is strictly less than
\(1\).  For such fixed \(r,K\), we have \(0 < \mb P(\mc G_n) < 1 \) for all
sufficiently large \(n\).  For these \(n\), define
\[
        \mu_{n,G}
        :=
        \mb E[F_{\one+u_{t_n}}\mid \mc G_n], \qquad \mu_{n,B}
        :=
        \mb E[F_{\one+u_{t_n}}\mid \mc G_n^c].
\]
so that
\[
        \mu_n
        =
        \mb P(\mc G_n)\mu_{n,G}
        +
        \mb P(\mc G_n^c)\mu_{n,B}.
\]
Using convexity of total variation and
\(\norm{\mu_{n,B}-\pi_n}_{\on{TV}}\le1\), we get
\begin{equation}\label{eq:TV-split-full}
\begin{aligned}
        \norm{\mu_n-\pi_n}_{\on{TV}}
        &\le
        \mb P(\mc G_n)\norm{\mu_{n,G}-\pi_n}_{\on{TV}}
        +
        \mb P(\mc G_n^c)\norm{\mu_{n,B}-\pi_n}_{\on{TV}}        \\
        &\le
        \norm{\mu_{n,G}-\pi_n}_{\on{TV}}
        +
        \mb P(\mc G_n^c).
\end{aligned}
\end{equation}
The \(\chi^2\)-bound at the beginning of the section gives
\[
        \norm{\mu_{n,G}-\pi_n}_{\on{TV}}
        \le
        \frac12\sqrt{\chi^2(\mu_{n,G}\Vert\pi_n)}.
\]
Therefore, by \cref{prop:good-smoothing-full},
\[
        \limsup_{n\to\infty}
        \norm{\mu_{n,G}-\pi_n}_{\on{TV}}
        \le
        \frac12\sqrt{e^{C\delta^2}-1}.
\]
Combining this with \eqref{eq:G-prob-full} and
\eqref{eq:TV-split-full} yields
\[
        \limsup_{n\to\infty}
        \norm{\mu_n-\pi_n}_{\on{TV}}
        \le
        C\sqrt r+\frac1K+\frac12\sqrt{e^{C\delta^2}-1}.
\]
Now send the parameters to their limiting values in the order
\[
        \delta\downarrow0,
        \qquad
        K\to\infty,
        \qquad
        r\downarrow0 .
\]
It follows that
\[
        \norm{Q_n^{t_n}(e^{(1)},\cdot)-\pi_n}_{\on{TV}}
        =
        \norm{\mu_n-\pi_n}_{\on{TV}}
        \longrightarrow0 .
\]
Finally, \cref{prop:sphere-energy} transfers this convergence from the
energy chain to Kac's walk on the sphere:
\[
        d_n^{(1)}(t_n)\longrightarrow0 .
\]
This proves \eqref{eq:main-upper}.
\end{proof}

\bibliographystyle{amsplain0}
\bibliography{main}

\appendix

\section{Fourth-moment calculations}
\label{app:fourth-moment}

We prove the recursion used in \cref{lem:W-moments-full}.  Recall that
\[
        M_t:=\mb E W_t^2,
        \qquad
        V_t:=\mb E\sum_i\xi_{t,i}^4 .
\]
The exact recursion is
\begin{equation}\label{eq:fourth-matrix-recursion-app}
        \binom{M_{t+1}}{V_{t+1}}
        =
        \begin{pmatrix}
        \dfrac{16n^4-32n^3-39n^2+12n+36}{16n^3(n-1)}
        &
        \dfrac{19n^2+4n+12}{16n^2(n-1)}
        \\[2ex]
        \dfrac{9(n^3+24n^2-8n-24)}{32n^4(n-1)}
        &
        \dfrac{32n^4-61n^3-120n^2-24n-72}{32n^3(n-1)}
        \end{pmatrix}
        \binom{M_t}{V_t}.
\end{equation}
In particular, the matrix in \eqref{eq:fourth-matrix-recursion-app} has
the asymptotic form
\[
        \begin{pmatrix}
        1-\frac1n+O(n^{-2})
        &
        \frac{19}{16n}+O(n^{-2})\\[0.7ex]
        O(n^{-2})
        &
        1-\frac{29}{32n}+O(n^{-2})
        \end{pmatrix}.
\]

Fix \(t\), and suppose that the selected pair is \(\{i,j\}\).  Set
\[
        s_t:=\xi_{t,i}+\xi_{t,j},
        \qquad
        \Delta_t:=\xi_{t,i}-\xi_{t,j}.
\]
The centered update is
\[
        \xi_{t+1,k}
        =
        \xi_{t,k}-\frac{(2B-1)\Delta_t}{n},
        \qquad k\notin\{i,j\},
\]
and
\[
        \xi_{t+1,i}=\xi_{t+1,j}
        =
        \frac{s_t}{2}
        +
        \left(\frac12-\frac1n\right)(2B-1)\Delta_t .
\]
We use only the beta moments
\[
        \mb E(2B-1)^{2\ell+1}=0,
        \qquad
        \mb E(2B-1)^2=\frac12,
        \qquad
        \mb E(2B-1)^4=\frac38 .
\]
In the fixed-pair calculations below, \(\mb E_B\) denotes expectation
only over \(B\), with \(\xi_t\) and the selected pair fixed.

\paragraph{\bf The \(W_t^2\) recursion.}
From the centered update,
\[
        W_{t+1}
        =
        W_t
        +
        (2B-1)\Delta_t s_t
        +
        \left\{
        (2B-1)^2\left(\frac12-\frac1n\right)-\frac12
        \right\}\Delta_t^2 .
\]
Averaging the square over \(B\) gives
\[
\begin{aligned}
        \mb E_B[W_{t+1}^2]
        &=
        W_t^2
        -
        \left(\frac12+\frac1n\right)W_t\Delta_t^2
        +
        \frac12s_t^2\Delta_t^2
        +
        \frac{3n^2+4n+12}{32n^2}\Delta_t^4 .
\end{aligned}
\]
Averaging over the uniformly chosen pair, and using
\(\sum_i\xi_{t,i}=0\), we use the identities
\[
        \sum_{i<j}(\xi_{t,i}-\xi_{t,j})^2=nW_t,
\]
\[
        \sum_{i<j}(\xi_{t,i}-\xi_{t,j})^4
        =
        n\sum_i\xi_{t,i}^4+3W_t^2,
\]
and
\[
        \sum_{i<j}(\xi_{t,i}+\xi_{t,j})^2
        (\xi_{t,i}-\xi_{t,j})^2
        =
        n\sum_i\xi_{t,i}^4-W_t^2.
\]
This gives
\[
\begin{aligned}
        \mb E[W_{t+1}^2\mid \xi_t]
        &=
        \frac{16n^4-32n^3-39n^2+12n+36}{16n^3(n-1)}\,W_t^2 \\
        &\quad+
        \frac{19n^2+4n+12}{16n^2(n-1)}
        \sum_i\xi_{t,i}^4 .
\end{aligned}
\]

\paragraph{\bf The fourth-moment recursion.}
We now compute the same fixed-pair average for
\(\sum_k\xi_{t+1,k}^4\).  For \(k\notin\{i,j\}\), the centered update gives
\[
        \xi_{t+1,k}
        =
        \xi_{t,k}-\frac{(2B-1)\Delta_t}{n}.
\]
Therefore
\[
\begin{aligned}
        \mb E_B
        \sum_{k\notin\{i,j\}}\xi_{t+1,k}^4
        &=
        \sum_{k\notin\{i,j\}}\xi_{t,k}^4
        +
        \frac{3\Delta_t^2}{n^2}
        \sum_{k\notin\{i,j\}}\xi_{t,k}^2
        +
        \frac{3(n-2)}{8n^4}\Delta_t^4                                      \\
        &=
        \sum_{k\notin\{i,j\}}\xi_{t,k}^4
        +
        \frac{3\Delta_t^2}{n^2}
        \left(W_t-\frac{s_t^2+\Delta_t^2}{2}\right)
        +
        \frac{3(n-2)}{8n^4}\Delta_t^4 .
\end{aligned}
\]
For the indices \(i\) and \(j\),
\[
        \xi_{t+1,i}=\xi_{t+1,j}
        =
        \frac{s_t}{2}
        +
        \left(\frac12-\frac1n\right)(2B-1)\Delta_t .
\]
Thus
\[
\begin{aligned}
        \mb E_B\left(\xi_{t+1,i}^4+\xi_{t+1,j}^4\right)
        &=
        \frac{s_t^4}{8}
        +
        \frac32
        \left(\frac12-\frac1n\right)^2
        s_t^2\Delta_t^2
        +
        \frac34
        \left(\frac12-\frac1n\right)^4
        \Delta_t^4 .
\end{aligned}
\]
Since
\[
        \xi_{t,i}^4+\xi_{t,j}^4
        =
        \frac{s_t^4+6s_t^2\Delta_t^2+\Delta_t^4}{8},
\]
the two equations above imply
\[
\begin{aligned}
        \mb E_B\sum_k\xi_{t+1,k}^4
        &=
        \sum_k\xi_{t,k}^4
        +
        \frac{3\Delta_t^2}{n^2}
        \left(W_t-\frac{s_t^2+\Delta_t^2}{2}\right)
        +
        \frac{3(n-2)}{8n^4}\Delta_t^4                         \\
        &\quad+
        \left\{
        \frac32\left(\frac12-\frac1n\right)^2-\frac34
        \right\}s_t^2\Delta_t^2                                \\
        &\quad+
        \left\{
        \frac34\left(\frac12-\frac1n\right)^4-\frac18
        \right\}\Delta_t^4 .
\end{aligned}
\]
Averaging this identity over the selected pair and using the same three
symmetric-sum identities gives
\[
\begin{aligned}
        \mb E\left[\sum_k\xi_{t+1,k}^4\,\middle|\,\xi_t\right]
        &=
        \frac{9(n^3+24n^2-8n-24)}{32n^4(n-1)}\,W_t^2 \\
        &\quad+
        \frac{32n^4-61n^3-120n^2-24n-72}{32n^3(n-1)}
        \sum_i\xi_{t,i}^4 .
\end{aligned}
\]
Taking expectations in the last display and in the \(W_t^2\)-recursion
above proves \eqref{eq:fourth-matrix-recursion-app}.

\section{The permutation estimate}
\label{sub:permutation-estimate}

We use the following consequence of the Hadamard-type inequality for
permanents of Carlen, Lieb and Loss
\cite[Theorem~1.1]{CarlenLiebLoss2006}.  If
\(Z=(Z_{ij})\) is an \(m\times m\) complex matrix, then
\begin{equation}\label{eq:CLL-permanent}
        |\on{per}(Z)|
        \le
        \frac{m!}{m^{m/2}}
        \prod_{i=1}^m
        \left(\sum_{j=1}^m |Z_{ij}|^2\right)^{1/2}.
\end{equation}
Indeed, the theorem in \cite{CarlenLiebLoss2006} is stated with column
norms, and \eqref{eq:CLL-permanent} follows by applying it to
\(Z^{\mathsf T}\).  Here
\[
        \on{per}(Z)
        :=
        \sum_{\pi\in S_m}\prod_{i=1}^m Z_{i,\pi(i)} .
\]

We first derive a centered exponential-moment bound.

\begin{lemma}
\label{lem:rank-one-perm-laplace}
For every \(0\le s_0<\infty\), there are constants
\(\eta_0=\eta_0(s_0)>0\) and \(C=C(s_0)<\infty\) such that the following
holds.  Let \(\alpha,\beta\in\mb R^m\) satisfy
\[
        \sum_i\alpha_i=\sum_i\beta_i=0,
        \qquad
        \norm{\alpha}_\infty\norm{\beta}_\infty\le\eta_0 .
\]
Then, for every \(|s|\le s_0\),
\begin{equation}\label{eq:rank-one-laplace}
        \mb E_\Pi
        \exp\left\{
        s\sum_{i=1}^m\alpha_i\beta_{\Pi(i)}
        \right\}
        \le
        \exp\left\{
        C s^2
        \frac{\norm{\alpha}_2^2\norm{\beta}_2^2}{m}
        \right\},
\end{equation}
where \(\Pi\) is a uniform random permutation of \(\{1,\ldots,m\}\).
\end{lemma}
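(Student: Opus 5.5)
Expanding the exponential in a power series and controlling each moment through the Carlen--Lieb--Loss permanent bound \eqref{eq:CLL-permanent} is the plan. Write \(S:=\sum_i\alpha_i\beta_{\Pi(i)}\). The key observation is that
\[
        \mb E_\Pi\exp\{sS\}
        =
        \frac{1}{m!}\on{per}(Z),
        \qquad
        Z_{ij}:=e^{s\alpha_i\beta_j},
\]
since \(\prod_i e^{s\alpha_i\beta_{\pi(i)}}=e^{s\sum_i\alpha_i\beta_{\pi(i)}}\). Applying \eqref{eq:CLL-permanent} directly gives
\[
        \mb E_\Pi e^{sS}
        \le
        \prod_{i=1}^m\left(\frac1m\sum_{j=1}^m e^{2s\alpha_i\beta_j}\right)^{1/2}.
\]
So the problem reduces to a row-by-row estimate, which is where centering enters.

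Fix a row \(i\) and set \(x_j:=2s\alpha_i\beta_j\). We have \(|x_j|\le2s_0\eta_0\), which is small once \(\eta_0\) is chosen small depending on \(s_0\). Using \(e^x\le1+x+x^2\) for \(|x|\le1\), together with \(\sum_j\beta_j=0\), the linear term vanishes. This gives
\[
        \frac1m\sum_j e^{2s\alpha_i\beta_j}
        \le
        1+\frac{4s^2\alpha_i^2\norm{\beta}_2^2}{m}
        \le
        \exp\left\{\frac{4s^2\alpha_i^2\norm{\beta}_2^2}{m}\right\}.
\]
Taking square roots and multiplying over \(i\) yields
\[
        \mb E_\Pi e^{sS}\le\exp\left\{2s^2\frac{\norm{\alpha}_2^2\norm{\beta}_2^2}{m}\right\}.
\]
This is \eqref{eq:rank-one-laplace} with \(C=2\) and \(\eta_0=\min(1,1/(2s_0))\), interpreting \(\eta_0\) as arbitrary when \(s_0=0\). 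Here only \(\sum_j\beta_j=0\) is used; the centering of \(\alpha\) is not needed for this lemma and presumably enters later, in deducing \cref{lem:permutation-full}.

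The only point requiring care is whether the normalization in \eqref{eq:CLL-permanent} is exactly \(m!/m^{m/2}\) with row \(\ell^2\) norms of \(Z\) itself. This matters because the factor \(m^{-m/2}\) must combine with \(\prod_i(\sum_j|Z_{ij}|^2)^{1/2}\) to give the averages \(\bigl(\frac1m\sum_j\cdot\bigr)^{1/2}\); this factor bookkeeping is exactly what makes the bound sharp at \(s=0\), where both sides equal \(1\). Nonnegativity of the entries means there is no issue with taking absolute values. I expect no genuine obstacle beyond verifying this constant.
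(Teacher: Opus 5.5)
Your argument is correct and is essentially the paper's proof. You write the Laplace transform as $\on{per}(Z)/m!$ with $Z_{ij}=e^{s\alpha_i\beta_j}$ and apply the Carlen--Lieb--Loss bound in exactly the row-normalized form \eqref{eq:CLL-permanent} used in the paper, which settles your normalization concern; you then use $\sum_j\beta_j=0$ to kill the linear term row by row, and the only difference is that you make the constants explicit ($e^x\le 1+x+x^2$ on $[-1,1]$, giving $C=2$ and $\eta_0=1/(2s_0)$), where the paper leaves $C_1$ unspecified.
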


\begin{proof}
If \(s_0=0\), the assertion is immediate.  Assume \(s_0>0\), and choose
\(\eta_0>0\) so small that
\[
        2s_0\eta_0\le1 .
\]
Let
\[
        Z_{ij}:=\exp\{s\alpha_i\beta_j\}.
\]
Then
\[
\begin{aligned}
        \mb E_\Pi
        \exp\left\{
        s\sum_{i=1}^m\alpha_i\beta_{\Pi(i)}
        \right\}
        &=
        \frac1{m!}\on{per}(Z).
\end{aligned}
\]
By \eqref{eq:CLL-permanent},
\[
\begin{aligned}
        \frac1{m!}\on{per}(Z)
        &\le
        \prod_{i=1}^m
        \left(
        \frac1m\sum_{j=1}^m
        \exp\{2s\alpha_i\beta_j\}
        \right)^{1/2}.
\end{aligned}
\]
For \(|x|\le1\), we use
\[
        e^x\le 1+x+C_1x^2 .
\]
Since
\[
        |2s\alpha_i\beta_j|
        \le
        2s_0\norm{\alpha}_\infty\norm{\beta}_\infty
        \le1
\]
and \(\sum_j\beta_j=0\), we have, for every \(i\),
\[
\begin{aligned}
        \frac1m\sum_{j=1}^m
        \exp\{2s\alpha_i\beta_j\}
        &\le
        1+
        C_1s^2\alpha_i^2
        \frac1m\sum_{j=1}^m\beta_j^2 .
\end{aligned}
\]
Using \(\log(1+x)\le x\), this gives
\[
        \log
        \mb E_\Pi
        \exp\left\{
        s\sum_i\alpha_i\beta_{\Pi(i)}
        \right\}\le
        \frac12
        \sum_{i=1}^m
        C_1s^2\alpha_i^2
        \frac{\norm{\beta}_2^2}{m} =
        C s^2
        \frac{\norm{\alpha}_2^2\norm{\beta}_2^2}{m}.
\]
This proves \eqref{eq:rank-one-laplace}.
\end{proof}

We now prove \cref{lem:permutation-full}.  For reference, the desired
estimate is
\[
 \mb E_\Pi\exp\left\{
 \lambda\sum_{i=1}^m a_i b_{\Pi(i)}
 +D\sum_{i=1}^m a_i^2b_{\Pi(i)}^2
 \right\}
 \le
 \exp\left\{
 \lambda\frac{(\sum_i a_i)(\sum_i b_i)}{m}
 +C\frac{\norm{a}_2^2\norm{b}_2^2}{m}
 \right\},
\]
provided that \(\max_i |a_i|\vee\max_i |b_i|\) is sufficiently small.

\begin{proof}[Proof of \cref{lem:permutation-full}]
If \(m=1\), the estimate follows after increasing \(C\) so that
\(C\ge D\).  We assume \(m\ge2\).

Set
\[
        s_0:=2|\lambda|\vee2D\vee1 .
\]
Let \(\eta_0\) be the threshold in
\cref{lem:rank-one-perm-laplace} for this value of \(s_0\).  Choose
\(0<\delta_0<1\) so small that
\[
        4\delta_0^2\le\eta_0,
        \qquad
        4\delta_0^4\le\eta_0 .
\]
Assume throughout that
\[
        \max_i|a_i|\vee\max_i|b_i|\le\delta_0 .
\]

Write
\[
        \bar a:=\frac1m\sum_i a_i,
        \qquad
        \bar b:=\frac1m\sum_i b_i .
\]
Then
\[
        \sum_i a_i b_{\Pi(i)}
        =
        m\bar a\bar b
        +
        \sum_i (a_i-\bar a)(b_{\Pi(i)}-\bar b).
\]
By Cauchy--Schwarz,
\begin{multline}\label{eq:perm-cauchy-main}
\mb E_\Pi\exp\left\{
        \lambda\sum_i a_i b_{\Pi(i)}
        +
        D\sum_i a_i^2b_{\Pi(i)}^2
        \right\}                                                \\
 \qquad\le
        e^{\lambda m\bar a\bar b}
        \left(
        \mb E_\Pi
        \exp\left\{{2\lambda\sum_i (a_i-\bar a)(b_{\Pi(i)}-\bar b)}\right\}
        \right)^{1/2}
        \left(
        \mb E_\Pi
        \exp\left\{{2D\sum_i a_i^2b_{\Pi(i)}^2}\right\}
        \right)^{1/2}.
\end{multline}

\paragraph{\bf The centered linear term.}
The vectors
\[
        \bigl(a_i-\bar a\bigr)_{i=1}^m,
        \qquad
        \bigl(b_i-\bar b\bigr)_{i=1}^m
\]
have coordinate sum zero.  Also,
\[
        \sum_i(a_i-\bar a)^2\le\norm{a}_2^2,
        \qquad
        \sum_i(b_i-\bar b)^2\le\norm{b}_2^2,
\]
and
\[
        \max_i |a_i-\bar a|\le2\delta_0,
        \qquad
        \max_i |b_i-\bar b|\le2\delta_0.
\]
Thus
\[
        \max_i |a_i-\bar a|\,
        \max_i |b_i-\bar b|
        \le4\delta_0^2
        \le\eta_0 .
\]
Applying \cref{lem:rank-one-perm-laplace} with
\[
        \alpha_i=a_i-\bar a,
        \qquad
        \beta_i=b_i-\bar b,
        \qquad
        s=2\lambda,
\]
gives
\begin{equation}\label{eq:linear-perm-clean}
        \mb E_\Pi
        \exp\left\{2\lambda\sum_i (a_i-\bar a)(b_{\Pi(i)}-\bar b)\right\}
        \le
        \exp\left\{
        C_{\lambda,D}
        \frac{\norm{a}_2^2\norm{b}_2^2}{m}
        \right\}.
\end{equation}

\paragraph{\bf The quadratic term.}
We use
\[
        \sum_i a_i^2b_{\Pi(i)}^2=
        \frac{\norm{a}_2^2\norm{b}_2^2}{m}+
        \sum_i
        \left(a_i^2-\frac{\norm{a}_2^2}{m}\right)
        \left(b_{\Pi(i)}^2-\frac{\norm{b}_2^2}{m}\right).
\]
Both displayed vectors
\[
        \left(a_i^2-\frac{\norm{a}_2^2}{m}\right)_{i=1}^m,
        \qquad
        \left(b_i^2-\frac{\norm{b}_2^2}{m}\right)_{i=1}^m
\]
have coordinate sum zero.  For the first vector,
\[
        \sum_i
        \left(a_i^2-\frac{\norm{a}_2^2}{m}\right)^2
        =
        \sum_i a_i^4-\frac{\norm{a}_2^4}{m}
        \le
        \sum_i a_i^4
        \le
        \delta_0^2\norm{a}_2^2 ,
\]
and the same bound with \(a\) replaced by \(b\) also holds.  Moreover,
since \(\norm{a}_2^2/m\le\delta_0^2\) and
\(\norm{b}_2^2/m\le\delta_0^2\),
\[
        \max_i\left|a_i^2-\frac{\norm{a}_2^2}{m}\right|
        \le2\delta_0^2,
        \qquad
        \max_i\left|b_i^2-\frac{\norm{b}_2^2}{m}\right|
        \le2\delta_0^2 .
\]
Thus the product of the two sup-norms is at most
\[
        4\delta_0^4\le\eta_0 .
\]
Applying \cref{lem:rank-one-perm-laplace} with
\[
        \alpha_i=a_i^2-\frac{\norm{a}_2^2}{m},
        \qquad
        \beta_i=b_i^2-\frac{\norm{b}_2^2}{m},
        \qquad
        s=2D,
\]
yields
\begin{equation}\label{eq:quadratic-perm-clean}
\begin{aligned}
        \mb E_\Pi
        \exp\left\{2D\sum_i a_i^2b_{\Pi(i)}^2\right\}
        &\le
        \exp\left\{
        2D\frac{\norm{a}_2^2\norm{b}_2^2}{m}
        +
        C_{\lambda,D}
        \frac{\delta_0^4\norm{a}_2^2\norm{b}_2^2}{m}
        \right\}                                                \\
        &\le
        \exp\left\{
        C_{\lambda,D}
        \frac{\norm{a}_2^2\norm{b}_2^2}{m}
        \right\}.
\end{aligned}
\end{equation}

Combining \eqref{eq:perm-cauchy-main},
\eqref{eq:linear-perm-clean}, and
\eqref{eq:quadratic-perm-clean}, and using
\[
        m\bar a\bar b
        =
        \frac{(\sum_i a_i)(\sum_i b_i)}{m},
\]
proves \eqref{eq:perm-bound-full}.
\end{proof}

\section{Numerical verification}
\label{app:numerical}

This appendix proves \cref{lem:numerical-choice}.  Recall that
\[
        m(p)=\frac{2\Gamma(p+1/2)}{\sqrt\pi\,\Gamma(p+1)},
        \qquad
        \sigma_p=\frac{6+13p}{64},
\]
and
\[
        C_{\mr{BRW}}
        =
        \inf_{p>1}\frac{p}{2(1-m(p))}.
\]

The script \path{numerics.py} certifies the interval estimates below using
Arb ball arithmetic.  It encloses the gamma ratio defining \(m(p)\), using
the monotonicity of \(m\) on the interval under consideration, and it
encloses a digamma difference by combining a finite partial sum with
rigorous integral bounds for the tail.

Let \(\psi=(\log\Gamma)'\) denote the digamma function, and set
\[
        D(p):=\psi(p+1)-\psi(p+1/2).
\]
Then
\[
        D(p)
        =
        \sum_{k=0}^{\infty}
        \left(
        \frac1{k+p+1/2}-\frac1{k+p+1}
        \right)
        =
        \sum_{k=0}^{\infty}
        \frac{1}{2(k+p+1/2)(k+p+1)}.
\]
In particular, \(D(p)>0\) for \(p>1\).  Since
\[
        m'(p)
        =
        m(p)\{\psi(p+1/2)-\psi(p+1)\}
        =
        -m(p)D(p),
\]
the function \(m\) is strictly decreasing on \((1,\infty)\).

For the numerical certification, we write
\[
        D(p)
        =
        \sum_{k=0}^{N-1}
        \frac{1}{2(k+p+1/2)(k+p+1)}
        +R_N(p).
\]
The summand is positive and decreasing in \(k\), and therefore
\[
        \int_N^\infty
        \frac{du}{2(u+p+1/2)(u+p+1)}
        \le R_N(p)
        \le
        \int_{N-1}^\infty
        \frac{du}{2(u+p+1/2)(u+p+1)} .
\]
The integrals are explicit:
\[
        \int_M^\infty
        \frac{du}{2(u+p+1/2)(u+p+1)}
        =
        \log\frac{M+p+1}{M+p+1/2}.
\]

We first locate the minimizer of
\[
        p\mapsto \frac{p}{2(1-m(p))}.
\]
Since
\[
        \frac{d}{dp}\left\{\frac{p}{2(1-m(p))}\right\}
        =
        \frac{1-m(p)-pm(p)D(p)}{2(1-m(p))^2},
\]
define
\[
        H(p):=1-m(p)-pm(p)D(p).
\]
For \(p>1\), the monotonicity of \(m\) gives \(m(p)<m(1)=1\), and hence
the derivative of \(p/[2(1-m(p))]\) has the same sign as \(H(p)\).

We now show that \(H\) is strictly increasing on \((1,\infty)\).
Differentiating and using \(m'(p)=-m(p)D(p)\) gives
\[
        H'(p)
        =
        pm(p)\{D(p)^2-D'(p)\}.
\]
On the other hand, differentiating the locally uniformly convergent series
for \(D\) gives
\[
        D'(p)
        =
        \sum_{k=0}^{\infty}
        \left(
        -\frac1{(k+p+1/2)^2}
        +
        \frac1{(k+p+1)^2}
        \right)
        <0.
\]
Therefore \(H'(p)>0\) for every \(p>1\).

Finally,
\[
        \frac{p}{2(1-m(p))}\to\infty
        \qquad\text{as }p\downarrow1
        \quad\text{and as }p\to\infty.
\]
The first divergence follows from \(m(1)=1\) and \(m'(1)<0\), while the
second follows from the gamma-ratio asymptotic \(m(p)\to0\).  Thus, if
\(H(a)<0<H(b)\), then \(H\) has a unique zero \(p_*\in(a,b)\).  The
function \(p/[2(1-m(p))]\) is decreasing on \((1,p_*)\) and increasing
on \((p_*,\infty)\), so \(p_*\) is its unique global minimizer.

The interval computation verifies this sign change on
\[
        I=[2.41258,\,2.41259].
\]
More precisely, the script certifies
\[
        H(2.41258)<0<H(2.41259).
\]
It follows that the unique minimizer lies in \(I\), and hence
\[
        C_{\mr{BRW}}
        =
        \inf_{p\in I}\frac{p}{2(1-m(p))}.
\]
Subdividing \(I\) and applying interval arithmetic on each subinterval
gives
\begin{equation}\label{eq:numerical-certificate}
        C_{\mr{BRW}}
        \in[3.89160134,\,3.89160138].
\end{equation}
In particular \(C_{\mr{BRW}}>3\).  On the same interval, the interval
calculation gives
\begin{equation}\label{eq:numerical-certificate-aux}
        \sup_{p\in I}\frac{1+p/2}{\sigma_p}<3.779154,
        \qquad
        \inf_{p\in I}\left\{2(1-m(p))-\sigma_p\right\}>0.03613 .
\end{equation}

We now prove \cref{lem:numerical-choice}.  Fix
\(c>C_{\mr{BRW}}\).  Since
\[
        C_{\mr{BRW}}
        =
        \inf_{p\in I}\frac{p}{2(1-m(p))},
\]
there exists \(p\in I\subset(2,4)\) such that
\[
        \frac{p}{2(1-m(p))}<c.
\]
For this same \(p\), \eqref{eq:numerical-certificate} and
\eqref{eq:numerical-certificate-aux} give
\[
        \frac{1+p/2}{\sigma_p}
        <3.779154<3.89160134\le C_{\mr{BRW}}<c
\]
and
\[
        2(1-m(p))>\sigma_p.
\]
Therefore
\[
        c>\frac{p}{2(1-m(p))},
        \qquad
        c>\frac{1+p/2}{\sigma_p},
        \qquad
        2(1-m(p))>\sigma_p,
\]
which proves \cref{lem:numerical-choice}.

\end{document}